\newtheorem{theorem}{Theorem}
\newtheorem{corollary}[theorem]{Corollary}
\newtheorem{definition}[theorem]{Definition}
\newtheorem{example}[theorem]{Example}
\newtheorem{lemma}[theorem]{Lemma}
\newtheorem{proposition}[theorem]{Proposition}
\newtheorem{remark}[theorem]{Remark}
\newenvironment{proof}[1][Proof]{\noindent\textbf{#1.} }{\ \rule{0.5em}{0.5em}}
\begin{document}

\title{Controllability properties and invariance pressure for linear discrete-time systems}
\author{Fritz Colonius\\Institut f\"{u}r Mathematik, Universit\"{a}t Augsburg, Augsburg, Germany
\and Jo\~{a}o A. N. Cossich and Alexandre J. Santana\\Departamento de Matem\'{a}tica, Universidade Estadual de Maring\'{a}\\Maring\'{a}, Brazil}
\maketitle

\textbf{Abstract}\footnote{We have announced some results of the present paper
in \textquotedblleft Invariance pressure for linear discrete-time
systems\textquotedblright, Proceedings of the 2019 IEEE Information Theory
Workshop (IEEE ITW 2019), Visby, Sweden, 24-26 Aug. 2019.}\textbf{. }For
linear control systems in discrete time controllability properties are
characterized. In particular, a unique control set with nonvoid interior
exists and it is bounded in the hyperbolic case. Then a formula for the
invariance pressure of this control set is proved.

\textbf{Keywords.} controllability, control sets,\textbf{ }invariance
pressure, invariance entropy, discrete-time control systems

\textbf{MSC\ 2010.} 93B05, 37B40, 94A17

\section{Introduction}

Invariance pressure for subsets of the state space generalizes invariance
entropy of deterministic control systems by adding potentials on the control
range. We consider control systems in discrete time of the form%
\[
x_{k+1}=F(x_{k},u_{k}),k\in\mathbb{N}_{0}=\{0,1,\ldots\},
\]
where $F:M\times U\rightarrow M$ is smooth for a smooth manifold $M$ and a
compact control range $U\subset\mathbb{R}^{m}$. The invariance entropy
$h_{inv}(K,Q)$ determines the average data rate needed to keep the system in
$Q$ (forward in time) when it starts in $K\subset Q$. Basic references for
invariance entropy are Nair, Evans, Mareels, and Moran \cite{NEMM04} and the
monograph Kawan \cite{Kawa13}, where also the relation to minimal data rates
is explained. With some analogy to classical constructions for dynamical
systems, invariance pressure adds continuous functions $f:U\rightarrow
\mathbb{R}$ called potentials giving a weight to the control values.

For continuous-time systems, invariance entropy of hyperbolic control sets has
been analyzed in Kawan \cite{Kawa11b} and Kawan and Da Silva \cite{KawaDS16}.
Kawan and Da Silva \cite{KawaDS18} and \cite{KawaDS19} analyze invariance
entropy of partially hyperbolic controlled invariant sets and chain control
sets. Huang and Zhong \cite{HuanZ18} show dimension-like characterizations of
invariance entropy. Measure-theoretic versions of invariance entropy have been
considered in Colonius \cite{Colo18} and Wang, Huang, and Sun \cite{WangHS19}.
Invariance pressure has been analyzed in Colonius, Cossich, and Santana
\cite{Cocosa1, Cocosa2, Cocosa3}. In Zhong and Huang \cite{ZHuag19} it is
shown that several generalized notions of invariance pressure fit into the
dimension-theoretic framework due to Pesin.

The main results of the present paper are given for linear control systems
$x_{k+1}=Ax_{k}+Bu_{k}$ with an invertible matrix $A$ and control values
$u_{k}$ in a compact neighborhood $U$ of the origin in $\mathbb{R}^{m}$. It is
shown that a unique control set $D$ with nonvoid interior exists if and only
if the system without control constraints is controllable (i.e., the pair
$(A,B)$ is controllable), and $D$ is bounded if and only if $A$ is hyperbolic.
In this case a formula for the invariance pressure of compact subsets $K$ in
$D$ is presented.

The contents of this paper are as follows: Section \ref{section2} collects
general properties of control sets for nonlinear discrete-time systems.
Section \ref{section3} characterizes controllability properties of linear
discrete-time systems with control constraints and Section \ref{section4}
shows that here a unique control set with nonvoid interior exists and that it
is bounded if and only if the uncontrolled system is hyperbolic. Section
\ref{section5} introduces invariance entropy and as a generalization total
invariance pressure where potentials on the product of the state space and the
control range are allowed. For linear systems, Section \ref{section6} first
derives an upper bound for the total invariance pressure and a lower bound for
the invariance pressure. Combined they yield a formula for the invariance
pressure in the hyperbolic case.

\section{Control sets for nonlinear systems\label{section2}}

In this section we introduce some notation and prove several properties of
control sets with nonvoid interior for nonlinear discrete-time systems. They
are analogous to properties of systems in continuous time, however, the
statements are a bit more involved, since one has to consider in addition to
the interior of control sets their transitivity sets. A discussion of various
slightly differing versions in the literature is contained in Colonius
\cite[Section 5]{Colo18}.

We consider control systems of the form%
\begin{equation}
x_{k+1}=F(x_{k},u_{k}),k\in\mathbb{N}_{0},\label{nonlinear}%
\end{equation}
on a $C^{\infty}$-manifold $M$ of dimension $d$ endowed with a corresponding
metric. For an initial value $x_{0}\in M$ at time $k=0$ and control
$u=(u_{k})_{k\geq0}\in\mathcal{U}:=U^{\mathbb{N}_{0}}$ we denote the solutions
by $\varphi(k,x_{0},u),k\in\mathbb{N}_{0}$. Assume that the set of control
values $U\subset\mathbb{R}^{m}$ is nonvoid and satisfies $U\subset
\overline{\mathrm{int}U}$. Let $\tilde{U}$ be an open set containing
$\overline{U}$ and suppose that the map $F:M\times\tilde{U}\rightarrow M$ is a
$C^{\infty}$-map.

\begin{definition}
For $x\in M$ and $k\in\mathbb{N}$ the reachable set $\mathbf{R}_{k}(x)$ and
the controllable set $\mathbf{C}_{k}(x)$ are%
\begin{align*}
\mathbf{R}_{k}(x)  &  :=\{y\in M\left\vert \exists u\in\mathcal{U}%
:y=\varphi(k,x,u)\right.  \},\\
\mathbf{C}_{k}(x)  &  :=\{y\in M\left\vert \exists u\in\mathcal{U}%
:\varphi(k,y,u)=x\right.  \},
\end{align*}
resp., and $\mathbf{R}(x)$ and $\mathbf{C}(x)$ are the respective unions over
all $k\in\mathbb{N}$. The system is called accessible in $x$ if%
\begin{equation}
\mathrm{int}\mathbf{R}(x)\not =\varnothing\text{ and }\mathrm{int}%
\mathbf{C}(x)\not =\varnothing. \label{access0}%
\end{equation}

\end{definition}

Accessibility in $x$ certainly holds if%
\[
\mathrm{int}F(x,U)\not =\varnothing\text{ and }\mathrm{int}\{y\in M\left\vert
x\in F(y,U)\right.  \}\not =\varnothing.
\]

Next we specify maximal subsets of complete approximate controllability.

\begin{definition}
\label{Definition3.1}For system of the form (\ref{nonlinear}) a nonvoid subset
$D\subset M$ is called a control set if it is maximal with (i) $D\subset
\overline{\mathbf{R}(x)}$ for all $x\in D$, (ii) for every $x\in D$ there is
$u\in\mathcal{U}$ with $\varphi(k,x,u)\in D$ for all $k\in\mathbb{N}$. The
transitivity set $D_{0}$ of $D$ is $D_{0}:=\{z\in D\left\vert z\in
\mathrm{int}\mathbf{C}(z)\right.  \}$.
\end{definition}

We define for $k\geq1$ a $C^{\infty}$-map%
\[
G_{k}:M\times U^{k}\rightarrow M,G_{k}(x,u):=\varphi(k,x,u).
\]
Following Wirth \cite{Wirth98} we say that a pair $(x,u)\in M\times
\mathrm{int}U^{k}$ is regular if $\mathrm{rank}\frac{\partial G_{k}}{\partial
u}(x,u)=d$ (clearly, this implies $mk\geq d$). For $x\in M$ and $k\in
\mathbb{N}$ the regular reachable set and the regular controllable set at time
$k$ are
\begin{align*}
\mathbf{\hat{R}}_{k}(x)  &  :=\left\{  \varphi(k,x,u)\left\vert (x,u)\text{ is
regular}\right.  \right\}  ,\\
\mathbf{\hat{C}}_{k}(x)  &  :=\left\{  y\in M\left\vert x=\varphi(k,y,u)\text{
with }(y,u)\text{ regular}\right.  \right\}  ,
\end{align*}
resp., and the regular reachable set $\mathbf{\hat{R}}(x)$ and controllable
set $\mathbf{\hat{C}}(x)$ are given by the respective union over all
$k\in\mathbb{N}$. It is clear that $\mathbf{\hat{R}}(x)$ and $\mathbf{\hat{C}%
}(x)$ are open for every $x$.

Accessibility condition (\ref{access0}) implies that there is $k_{0}%
\in\mathbb{N}$ such that for all $k\geq k_{0}$ one has $\mathrm{int}%
\mathbf{R}_{k}(x)\not =\varnothing$ and
\[
\mathbf{R}_{k}(x)\subset\overline{\{\varphi(k,x,u)\in\mathrm{int}%
\mathbf{R}_{k}(x)\left\vert u\in\mathrm{int}U^{k}\right.  \}}.
\]
By Sard's Theorem the set of points $\varphi(k,x,u)\in\mathbf{R}_{k}(x)$ such
that $(x,u)$ is not regular has Lebesgue measure zero.

\begin{proposition}
\label{proposition_transitivity}Assume that accessibility condition
(\ref{access0}) holds for all $x\in M$. Then for every control set $D$ with
nonvoid interior the transitivity set $D_{0}$ is nonvoid and dense in
$\mathrm{int}D$.
\end{proposition}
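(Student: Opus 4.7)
Fix $x \in \mathrm{int} D$ and an open neighborhood $V \subset \mathrm{int} D$ of $x$. The plan is to produce $z \in V \cap D$ together with a regular pair $(z, \bar u)$ of some length $\bar k$ satisfying $\varphi(\bar k, z, \bar u) = z$; this places $z$ in the open set $\mathbf{\hat C}(z) \subset \mathrm{int}\mathbf{C}(z)$, so $z \in D_0 \cap V$, and both the nonemptiness and the density assertions follow. The control $\bar u$ will be the concatenation of two regular segments obtained from the implicit function theorem, realizing a closed loop $z \to y \to z$ through an auxiliary point $y \in V$.

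For the forward leg I will use $D \subset \overline{\mathbf{R}(x)}$ from condition (i) together with the density of regular reachable points in $\mathbf{R}_k(x)$ for $k \geq k_0$ (Sard's theorem plus $U \subset \overline{\mathrm{int} U}$, recorded just before the proposition); the trajectory length can be arranged to exceed $k_0$ by first following a long orbit in $D$ provided by (ii) and then using approximate controllability in $D$ to return close to $x$. This yields a regular pair $(x, \hat u)$ of length $k_1 \geq k_0$ with $y := G_{k_1}(x, \hat u) \in V$. Since $\frac{\partial G_{k_1}}{\partial u}(x, \hat u)$ has rank $d$, the implicit function theorem applied to $(z, u) \mapsto G_{k_1}(z, u) - y$ furnishes a neighborhood $W$ of $x$ in $M$ and a smooth map $\phi_1 \colon W \to \mathrm{int} U^{k_1}$ with $\phi_1(x) = \hat u$ and $G_{k_1}(z, \phi_1(z)) = y$ for $z \in W$, the pair $(z, \phi_1(z))$ remaining regular by continuity of the rank. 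The return leg is symmetric: since $y \in V \subset D$, condition (i) at $y$ gives $x \in \overline{\mathbf{R}(y)}$, and the same density argument yields regular pairs $(y, \check u_n)$ of lengths $\ell_n \geq k_0$ with $x_n := G_{\ell_n}(y, \check u_n) \to x$; for each such pair the implicit function theorem supplies a smooth $\phi_{2,n}$ on a neighborhood $N_n$ of $x_n$ with $G_{\ell_n}(y, \phi_{2,n}(\zeta)) = \zeta$ and $(y, \phi_{2,n}(\zeta))$ regular on $N_n$.

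To close the loop, for $n$ sufficiently large the point $x_n$ lies in the fixed neighborhood $V \cap W$ of $x$ (because $x_n \to x$), while $x_n \in N_n$ is automatic. Setting $z := x_n$ and $\bar u := (\phi_1(z), \phi_{2,n}(z))$ of length $\bar k := k_1 + \ell_n$, the composition rule gives $G_{\bar k}(z, \bar u) = G_{\ell_n}(y, \phi_{2,n}(z)) = z$, while the chain rule identifies the $\phi_{2,n}(z)$-block of $\frac{\partial G_{\bar k}}{\partial u}(z, \bar u)$ with $\frac{\partial G_{\ell_n}}{\partial u}(y, \phi_{2,n}(z))$, which has rank $d$; so $(z, \bar u)$ is regular, as required. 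The delicate point to monitor is that both implicit function theorem neighborhoods $W$ and $N_n$ depend on the chosen regular pairs and may shrink unpredictably; what rescues the matching is the asymmetry that $W$ is fixed once and for all after the forward leg and only needs to catch some $x_n$, which is guaranteed by $x_n \to x \in W$.
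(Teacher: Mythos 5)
Your argument is correct, but it takes a genuinely different and noticeably heavier route than the paper's. Both proofs open the same way: use the remark preceding the proposition (accessibility plus Sard) to produce a regular pair $(x,\hat u)$ with $y:=\varphi(k_1,x,\hat u)\in\mathrm{int}D$. At that point the paper stops using regularity machinery entirely. Since $(x,\hat u)$ is regular, $x\in\mathbf{\hat C}_{k_1}(y)\subset\mathrm{int}\mathbf{C}(y)$; pick a neighborhood $V$ of $x$ inside $\mathrm{int}\mathbf{C}(y)\cap\mathrm{int}D$; use $D\subset\overline{\mathbf{R}(y)}$ to get $z\in V\cap\mathbf{R}(y)$; then close with a purely set-theoretic monotonicity observation: $z\in\mathbf{R}(y)$ gives $y\in\mathbf{C}(z)$, hence $\mathbf{C}(y)\subset\mathbf{C}(z)$ and $z\in\mathrm{int}\mathbf{C}(y)\subset\mathrm{int}\mathbf{C}(z)$. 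No implicit function theorem, no return leg, no rank bookkeeping. You instead build an explicit regular closed loop $z\to y\to z$: an IFT family $\phi_1$ on a neighborhood $W$ of $x$ for the forward leg, a sequence of regular return pairs $(y,\check u_n)$ with $x_n\to x$ and their IFT inverses $\phi_{2,n}$ for the backward leg, then take $z:=x_n\in V\cap W$ for large $n$ and concatenate, observing that the last block of $\partial G_{\bar k}/\partial u$ already has rank $d$, so $z\in\mathbf{\hat C}(z)\subset\mathrm{int}\mathbf{C}(z)$. Both rest on the same Sard-type density of regular reachable points; the paper's monotonicity trick avoids the two IFT applications and the chain-rule step altogether. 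Your version does yield a slightly stronger byproduct — arbitrarily near any point of $\mathrm{int}D$ there is a point lying on a regular periodic trajectory with controls in $\mathrm{int}U$ — but that is more than the proposition needs. One small point to make explicit: the return leg needs $\ell_n\geq k_0(y)$ (the threshold at $y$, not at $x$) before the regular-reachable-point remark applies; this is arranged by the same device you already use for the forward leg, namely first running the orbit of condition (ii) from $y$ inside $D$ for a few steps before steering toward $x$.
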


\begin{proof}
For $x\in\mathrm{int}D$ there is $k_{0}\in\mathbb{N}$ such that the reachable
set $\mathbf{R}_{k}(x)$ at time $k$ has nonvoid interior for all $k\geq k_{0}%
$. There is $k\geq k_{0}$ with $\mathbf{R}_{k}(x)\cap\mathrm{int}%
D\not =\varnothing$, hence we may assume that there is $y:=\varphi
(k,x,u)\in\mathrm{int}\mathbf{R}_{k}(x)\cap\mathrm{int}D$. Then, by Sard's
Theorem, it follows that there is a point $y=\varphi(k,x,u)\in\mathrm{int}D$
with some regular $(x,u)$, i.e., $y\in\mathrm{int}D\cap\mathbf{\hat{R}}%
_{k}(x)$. Then $x\in\mathrm{int}\mathbf{C}(y)$. Let $V\subset\mathrm{int}%
\mathbf{C}(y)$ be a neighborhood of $x$. Since $x\in\mathrm{int}D$ and
$D\subset\overline{\mathbf{R}(y)}$, there is $z\in V\cap\mathbf{R}(y)\subset
D$ and thus $y\in\mathbf{C}(z)$. By construction, the point $z\in D$ satisfies
$z\in\mathrm{int}\mathbf{C}(y)\subset\mathrm{int}\mathbf{C}(z)$, hence it is
in the transitivity set of $D$ and $D_{0}$ is dense in $\mathrm{int}D$.
\end{proof}

\begin{remark}
In the general context of semigroups of continuous maps (and with slightly
different notation), Patr\~{a}o and San Martin \cite[ Propositions 4.8 and
4.10]{PatSM07} show that the transitivity set $D_{0}$ is dense in a control
set $D$ with nonvoid interior provided that $D_{0}\not =\varnothing$.
\end{remark}

We note the following further results for control sets.

\begin{proposition}
\label{lemma1} Assume that $D$ is a control set for a control system which is
accessible for all $x\in M$. Then its transitivity set $D_{0}$ satisfies
$D_{0}\subset\mathbf{R}(x)$ for all $x\in D$.
\end{proposition}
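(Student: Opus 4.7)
The plan is to exploit the two defining features of a control set together with the regularity that membership in $D_{0}$ provides. Fix $z\in D_{0}$ and an arbitrary $x\in D$; I want to produce $k\in\mathbb{N}$ and $u\in\mathcal{U}$ with $\varphi(k,x,u)=z$. The key observation is that $D_{0}$ membership gives an \emph{open} set of points from which $z$ is reachable, while $x\in D$ only gives that $z\in\overline{\mathbf{R}(x)}$; the openness converts the closure statement into an exact reachability statement.

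First I would unpack $z\in D_{0}$: by definition $z\in\mathrm{int}\,\mathbf{C}(z)$, so choose an open neighborhood $V$ of $z$ with $V\subset\mathbf{C}(z)$, i.e.\ every point of $V$ can be steered to $z$ in finitely many steps. Next, I would apply property (i) of the control set definition to $x\in D$: since $z\in D\subset\overline{\mathbf{R}(x)}$, the open neighborhood $V$ of $z$ must meet $\mathbf{R}(x)$. Pick $y\in V\cap\mathbf{R}(x)$, so there exist $k_{1}\in\mathbb{N}$ and $u^{(1)}\in\mathcal{U}$ with $\varphi(k_{1},x,u^{(1)})=y$, and there exist $k_{2}\in\mathbb{N}$ and $u^{(2)}\in\mathcal{U}$ with $\varphi(k_{2},y,u^{(2)})=z$. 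Concatenating these two controls in the obvious way produces $u\in\mathcal{U}$ with $\varphi(k_{1}+k_{2},x,u)=z$, proving $z\in\mathbf{R}(x)$.

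Since $x\in D$ was arbitrary, this gives $D_{0}\subset\mathbf{R}(x)$ for every $x\in D$. There is no real obstacle here; the accessibility hypothesis is not invoked in this argument itself but is the ambient assumption under which the transitivity set is known (via Proposition \ref{proposition_transitivity}) to be a substantive object rather than possibly empty. The only conceptual point worth highlighting in the write-up is exactly the step where openness of $\mathbf{C}(z)$ at $z$ upgrades $z\in\overline{\mathbf{R}(x)}$ to an actual reachability, because without this upgrade one would be stuck with $z\in\overline{\mathbf{R}(x)}\setminus\mathbf{R}(x)$ being a priori possible.
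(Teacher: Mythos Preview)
Your proof is correct and follows essentially the same route as the paper: use $z\in\mathrm{int}\,\mathbf{C}(z)$ together with $z\in D\subset\overline{\mathbf{R}(x)}$ to find a point in $\mathbf{R}(x)\cap\mathbf{C}(z)$, then concatenate. The paper's write-up is slightly more compressed (it does not name the intermediate neighborhood $V$ or point $y$ explicitly), but the argument is identical.
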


\begin{proof}
Let $x\in D$ and $x_{0}\in D_{0}$. By approximate controllability of $D$ and
$x_{0}\in\mathrm{int}\mathbf{C}(x_{0})$, there are $k\geq1$ and $u\in
\mathcal{U}$ with $\varphi(k,x,u)\in\mathrm{int}\mathbf{C}(x_{0})$. Hence
there are $l\geq1$ and $v\in\mathcal{U}$ such that $\varphi(l,\varphi
(k,x,u),v)=x_{0}$. Therefore $x\in\mathbf{C}(x_{0})$, that is, $x_{0}%
\in\mathbf{R}(x)$.
\end{proof}

\begin{proposition}
\label{lem1.1.2} Assume that $D$ is a control set with nonvoid interior of a
control system, which is accessible for all $x\in M$. Then the transitivity
set $D_{0}$ of $D$ is nonvoid and%
\[
D=\overline{\mathbf{R}(x_{0})}\cap\mathbf{C}(x_{0})\text{ for all }x_{0}\in
D_{0},
\]
in particular, the set $D$ is measurable.
\end{proposition}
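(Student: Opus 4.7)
The plan is to fix any $x_0\in D_0$ (nonvoid by Proposition~\ref{proposition_transitivity}) and compare $D$ with the candidate set $D':=\overline{\mathbf{R}(x_0)}\cap\mathbf{C}(x_0)$. The inclusion $D\subset D'$ will follow at once from the two preceding results: property (i) of the control set applied to $x_0\in D$ gives $D\subset\overline{\mathbf{R}(x_0)}$, while Proposition~\ref{lemma1} applied to $x_0\in D_0$ yields $x_0\in\mathbf{R}(x)$, i.e., $x\in\mathbf{C}(x_0)$, for every $x\in D$. For the reverse inclusion, I would verify that $D'$ itself satisfies the two defining properties of a control set; maximality of $D$ then forces $D'=D$.

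Property (i) for $D'$ is straightforward: given $x,y\in D'$, the condition $x\in\mathbf{C}(x_0)$ supplies a control steering $x$ to $x_0$ in finite time, while $y\in\overline{\mathbf{R}(x_0)}$ furnishes a sequence $y_n=\varphi(l_n,x_0,v_n)\to y$. Concatenating the two controls shows $\varphi(k+l_n,x,\cdot)\to y$ and hence $y\in\overline{\mathbf{R}(x)}$.

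The main obstacle is property (ii) for $D'$. The naive strategy — steer $x$ to $x_0$ with some control $u$ in $j_0$ steps, then apply the control that keeps $x_0$ in $D$ — is problematic because the intermediate points $\varphi(j,x,u)$, $1\le j<j_0$, are not obviously in $D'$. I would overcome this by showing that these intermediate points do lie in $D'$: they are in $\mathbf{C}(x_0)$ since the tail of $u$ steers them to $x_0$, and they lie in $\overline{\mathbf{R}(x_0)}$ by the following continuity argument. Since $x\in\overline{\mathbf{R}(x_0)}$, pick $x_n=\varphi(l_n,x_0,v_n)\to x$; then continuity of the $j$-fold iterate of $F$ gives $\varphi(j,x_n,u)\to\varphi(j,x,u)$, and each $\varphi(j,x_n,u)$ is reachable from $x_0$. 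Once the intermediate points are known to lie in $D'$, the concatenation of $u$ with the $D$-invariant control from $x_0$ stays in $D'$ for all times (using $D\subset D'$ proved above), yielding (ii). Maximality of $D$ then gives $D=D'$.

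For measurability, $\overline{\mathbf{R}(x_0)}$ is closed, and each $\mathbf{C}_k(x_0)$ is the image under the projection $M\times U^k\to M$ of the closed set $G_k^{-1}(x_0)$. Compactness of $U^k$ makes this projection a closed map, so $\mathbf{C}_k(x_0)$ is closed and $\mathbf{C}(x_0)=\bigcup_{k\ge1}\mathbf{C}_k(x_0)$ is $F_\sigma$. Hence $D$ is a Borel set and therefore measurable.
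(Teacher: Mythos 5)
Your proposal is correct and follows essentially the same outline as the paper: establish $D\subset D'$ via property (i) of control sets and Proposition~\ref{lemma1}, then invoke maximality. The paper compresses the reverse direction into the remark that ``$D'$ is a set of approximate controllability with nonvoid interior'' and an appeal to Kawan's Proposition~1.20 to place $D'$ inside a control set $D''$; you instead verify properties (i) and (ii) of Definition~\ref{Definition3.1} for $D'$ directly, which makes the argument self-contained at the cost of the careful continuity/concatenation argument showing that the intermediate points $\varphi(j,x,u)$, $1\le j<j_0$, lie in $\overline{\mathbf{R}(x_0)}$. That verification is sound. Your argument for measurability (closedness of each $\mathbf{C}_k(x_0)$ via properness of the projection $M\times U^k\to M$, hence $\mathbf{C}(x_0)$ is $F_\sigma$ and $D$ is Borel) fills in a detail the paper only asserts. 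Both routes rely on the same two prior propositions; yours buys independence from the external reference, the paper's is shorter.
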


\begin{proof}
By Proposition \ref{proposition_transitivity} the transitivity set $D_{0}$ is
nonvoid. Let $x_{0}\in D_{0}$. Note that $D\subset\overline{\mathbf{R}(x_{0}%
)}$ by the definition of control sets. For every $x\in D$, Proposition
\ref{lemma1} shows that $x_{0}\in\mathbf{R}(x)$, that is $x\in\mathbf{C}%
(x_{0})$. Hence $D\subset D^{\prime}:=\overline{\mathbf{R}(x_{0})}%
\cap\mathbf{C}(x_{0})$. It is not difficult to see that the set $D^{\prime}$
is a set of approximate controllability with nonvoid interior. It follows that
$D^{\prime}$ is contained in a maximal set $D^{\prime\prime}$ of approximate
controllability with nonvoid interior, which by Kawan \cite[Proposition
1.20]{Kawa13} is a control set. By the maximality property of control sets and
$D\subset D^{\prime\prime}$, it follows that $D=D^{\prime}=D^{\prime\prime}$,
which concludes the proof.
\end{proof}

The following proposition shows that a trajectory starting in the interior of
a control set $D$ and remaining in it up to a positive time must actually
remain in the interior of $D$.

\begin{proposition}
\label{proposition_in}Assume that the maps $F(\cdot,u)$ are local
diffeomorphisms on $M$ for all $u\in U$. Let $x$ be in the interior of a
control set $D$ and suppose that for some $\tau\in\mathbb{N}$ and
$u\in\mathcal{U}$ one has $\varphi(k,x,u)\in D,k\in\{1,\dotsc,\tau\}$. Then
$\varphi(k,x,u)\in\mathrm{int}D,k\in\{1,\dotsc,\tau\}$.
\end{proposition}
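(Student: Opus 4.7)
My plan is to fix a point $p \in D_0$, which is nonempty by Proposition~\ref{proposition_transitivity} (invoked under the accessibility hypothesis implicit in this section). Then Proposition~\ref{lem1.1.2} gives $D = \overline{\mathbf{R}(p)} \cap \mathbf{C}(p)$, and the open set $\mathrm{int}\,\overline{\mathbf{R}(p)} \cap \mathrm{int}\,\mathbf{C}(p)$ lies inside $D$ and contains $\mathrm{int}\,D$, so it actually equals $\mathrm{int}\,D$. Writing $x_k := \varphi(k,x,u)$, it therefore suffices to verify, for each $k \in \{1,\dotsc,\tau\}$, the two memberships $x_k \in \mathrm{int}\,\overline{\mathbf{R}(p)}$ and $x_k \in \mathrm{int}\,\mathbf{C}(p)$ separately.

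For the first inclusion I would use that the composition $\varphi(k,\cdot,u) = F(\cdot,u_{k-1}) \circ \dotsb \circ F(\cdot,u_0)$ of local diffeomorphisms is itself a local diffeomorphism at $x$. Picking an open neighborhood $V \ni x$ with $V \subset \mathrm{int}\,D \subset \overline{\mathbf{R}(p)}$ on which each $\varphi(k,\cdot,u)$ (for $k \le \tau$) is a diffeomorphism onto an open image, a one-step continuity argument shows $F(\overline{\mathbf{R}(p)},u_{k-1}) \subset \overline{\mathbf{R}(p)}$ (approximate any $y \in \overline{\mathbf{R}(p)}$ by $y_n \in \mathbf{R}(p)$ and pass to the limit). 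Iterating yields $\varphi(k,V,u) \subset \overline{\mathbf{R}(p)}$, so $x_k$ lies in the open set $\varphi(k,V,u) \subset \mathrm{int}\,\overline{\mathbf{R}(p)}$.

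For the second inclusion, $x_k \in D$ together with Proposition~\ref{lemma1} produces $j \geq 1$ and $c \in \mathcal{U}$ with $\varphi(j,x_k,c) = p$. The composed map $\varphi(j,\cdot,c)$ is again a local diffeomorphism at $x_k$; I would pull back an open neighborhood of $p$ contained in $\mathbf{C}(p)$ (which exists because $p \in D_0$ means $p \in \mathrm{int}\,\mathbf{C}(p)$) through $\varphi(j,\cdot,c)$ to obtain an open neighborhood $W \ni x_k$ with $\varphi(j,W,c) \subset \mathbf{C}(p)$. For any $y \in W$, concatenating the control $c$ with one that steers $\varphi(j,y,c)$ to $p$ shows $y \in \mathbf{C}(p)$; hence $W \subset \mathbf{C}(p)$ and $x_k \in \mathrm{int}\,\mathbf{C}(p)$.

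The main obstacle I anticipate is this second inclusion, since it requires using the local-diffeomorphism hypothesis \emph{backwards} through a finite composition and relies crucially on the nonemptiness of $D_0$, which in the setting of the preceding propositions comes from the accessibility assumption. The forward inclusion is comparatively routine once one observes that each $F(\cdot,u_j)$ is an open map sending $\overline{\mathbf{R}(p)}$ into itself.
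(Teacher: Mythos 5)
Your argument is correct, but it takes a genuinely different route from the paper's and quietly rests on an extra hypothesis. You decompose $\mathrm{int}\,D$ as $\mathrm{int}\,\overline{\mathbf{R}(p)}\cap\mathrm{int}\,\mathbf{C}(p)$ for a transitivity point $p\in D_0$, then handle each membership with a forward argument (openness of $\varphi(k,\cdot,u)$ plus $F(\overline{\mathbf{R}(p)},u)\subset\overline{\mathbf{R}(p)}$) and a backward argument (pulling back through $\varphi(j,\cdot,c)$ a neighborhood of $p$ inside $\mathbf{C}(p)$). This is a clean structural picture, but it imports Propositions~\ref{proposition_transitivity}, \ref{lemma1} and \ref{lem1.1.2}, all of which need the accessibility hypothesis to guarantee $D_0\neq\varnothing$; the proposition as stated assumes only that the maps $F(\cdot,u)$ are local diffeomorphisms, not accessibility. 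The paper's proof avoids $D_0$ entirely and argues by contradiction: if $y=\varphi(k,x,u)\in D\cap\partial D$, then, since $x\in\mathrm{int}\,D$ and $\varphi(k,\cdot,u)$ is an open map, there is a neighborhood $N_0(y)=\varphi(k,N(x),u)$ with $N(x)\subset\mathrm{int}\,D$ that is exactly reachable from $D$; since $y\in D$ and $\mathrm{int}\,D\neq\varnothing$, some control steers $y$ into $\mathrm{int}\,D$, hence by continuity a neighborhood $N_1(y)$ is steered into $\mathrm{int}\,D$; every point of $N_0(y)\cap N_1(y)$ is therefore reachable from $D$ and can reach $D$, so the maximality of control sets forces $N_0(y)\cap N_1(y)\subset D$, giving $y\in\mathrm{int}\,D$ and contradicting $y\in\partial D$. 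The paper's route is shorter and matches the stated hypotheses exactly; yours is a valid alternative that trades self-containedness for a more transparent explanation of why the trajectory stays interior, provided you are willing to add accessibility to the hypotheses (or verify it in the application at hand).
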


\begin{proof}
Suppose that $y:=\varphi(k,x,u)\in D\cap\partial D$ for some $k\in
\{1,\dotsc,\tau\}$. By the assumption on the maps $F(\cdot,u)$ and
$x\in\mathrm{int}D$, there is a neighborhood $N_{0}(y)$ of $y$ with
$N_{0}(y)=\varphi(k,N(x),u)$ for a neighborhood $N(x)\subset D$ of $x$. Since
$y\in D$, there are a control $v\in\mathcal{U}$ and $k_{0}\in\mathbb{N}$ with
$\varphi(k_{0},y,v)\in\mathrm{int}D$. Then there is a neighborhood $N_{1}(y)$
with $\varphi(k_{0},N_{1}(y),u)\subset\mathrm{int}D$. By the maximality
property of control sets it follows that the neighborhood $N_{0}(y)\cap
N_{1}(y)$ of $y$ is contained in $D$, contradicting $y\in\partial D$.
\end{proof}

\section{Controllability properties of linear systems\label{section3}}

Next we consider linear control systems in $\mathbb{K}^{d}$, $\mathbb{K}%
=\mathbb{R}$ or $\mathbb{K}=\mathbb{C}$, of the form
\begin{equation}
x_{k+1}=Ax_{k}+Bu_{k},\ \ u_{k}\in U\subset\mathbb{K}^{m},\label{linsys}%
\end{equation}
where $A\in Gl(d,\mathbb{K})$ and $B\in\mathbb{K}^{d\times m}$ and the control
range $U$ is a compact convex neighborhood of $0\in\mathbb{K}^{m}$ with
$U=\overline{\mathrm{int}U}$.

For initial value $x\in\mathbb{K}^{d}$ and control $u\in\mathcal{U}%
=U^{\mathbb{N}_{0}}$ the solutions of (\ref{linsys}) are given by
\[
\varphi(k,x,u)=A^{k}x+\sum_{i=0}^{k-1}A^{k-1-i}Bu_{i},k\in\mathbb{N}_{0}.
\]
Where convenient, we also use the notation $\varphi_{k,u}:=\varphi
(k,\cdot,u):\mathbb{R}^{d}\rightarrow\mathbb{R}^{d}$. Note the following observation.

\begin{proposition}
For $x\in\mathbb{K}^{d}$ the reachable set $\mathbf{R}_{k}(x)$ at time $k$,
\[
\mathbf{R}_{k}(x)=\{y\in\mathbb{K}^{d}\left\vert \ \exists u\in\mathcal{U}%
\ \mbox{with}\ \varphi(k,x,u)=y\right.  \}
\]
is compact and convex.
\end{proposition}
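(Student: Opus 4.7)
The plan is to write the reachable set as the image of a compact convex set under a continuous affine map, from which both properties follow immediately.

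First I would fix $k\in\mathbb{N}$ and $x\in\mathbb{K}^d$ and define the affine map
\[
\Phi_{k,x}:(\mathbb{K}^m)^k\longrightarrow\mathbb{K}^d,\qquad \Phi_{k,x}(u_0,\dots,u_{k-1}):=A^k x+\sum_{i=0}^{k-1}A^{k-1-i}Bu_i.
\]
By the solution formula recalled just before the proposition, $\mathbf{R}_k(x)=\Phi_{k,x}(U^k)$, where $U^k=U\times\cdots\times U$ ($k$ factors) $\subset(\mathbb{K}^m)^k$.

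Next I would invoke the standing hypotheses on $U$: by assumption $U$ is a compact convex neighborhood of the origin. Hence $U^k$ is compact (finite product of compact sets, e.g.\ by Tychonoff or simply because finite products of compact metric spaces are compact) and convex (finite product of convex sets is convex).

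Finally I would observe that $\Phi_{k,x}$ is affine and in particular continuous. Continuous images of compact sets are compact, and affine images of convex sets are convex, so $\mathbf{R}_k(x)=\Phi_{k,x}(U^k)$ is both compact and convex. There is no real obstacle here; the only thing to be careful about is using the compactness and convexity of $U$ together with the explicit affine parametrization of solutions, rather than attempting a direct set-theoretic argument.
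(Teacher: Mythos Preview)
Your proof is correct and follows essentially the same idea as the paper's: both exploit that $\mathbf{R}_k(x)$ is the image of the compact convex set $U^k$ under the affine solution map. The only cosmetic difference is that the paper verifies compactness by showing boundedness via an explicit norm estimate and closedness via a sequential argument, whereas you invoke directly that continuous images of compact sets are compact; the convexity argument is identical in both.
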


\begin{proof}
Convexity follows from the convexity of $U$. Since $U\subset\mathbb{K}^{m}$ is
compact, there is $M>0$ such that $\Vert u\Vert\leq M$, for all $u\in U$.
Then, if $y=\varphi(k,x,u)\in\mathbf{R}_{k}(x)$, $u=(u_{i})\in U^{k}$, we get
\[
\Vert y\Vert\leq\Vert A^{k}x\Vert+\sum_{i=0}^{k-1}\Vert A^{k-1-i}Bu_{i}%
\Vert\leq\Vert A\Vert^{k}\Vert x\Vert+M\sum_{i=0}^{k-1}\Vert A\Vert
^{k-1-i}\Vert B\Vert<\infty,
\]
hence $\mathbf{R}_{k}(x)$ is bounded. In order to show that $\mathbf{R}%
_{k}(x)$ is closed, consider a sequence $y_{n}=\varphi(k,x,u^{n})$ in
$\mathbf{R}_{k}(x)$ such that $y_{n}\rightarrow y\in\mathbb{K}^{d}$ and
$u^{n}\in U^{k}$. By compactness of $U$, we have that $U^{k}$ is compact,
hence there is a subsequence converging to some $u\in U^{k}$. Therefore
$y=\varphi(k,x,u)\in\mathbf{R}_{k}(x)$ by continuity.
\end{proof}

\begin{proposition}
\label{proposition_two}For all $k,l\in\mathbb{N}$ we have
\[
\mathbf{R}_{k}(0)+A^{k}\mathbf{R}_{l}(0)=\mathbf{R}_{l+k}(0)\text{ and
}\mathrm{int}\mathbf{R}_{k}\mathbf{(0)+}A^{k}\mathbf{R}_{l}(0)\subset
\mathrm{int}\mathbf{R}_{k+l}(0).
\]

\end{proposition}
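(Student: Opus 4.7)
My plan is to exploit the explicit variation-of-constants formula
$\varphi(n,0,u)=\sum_{i=0}^{n-1}A^{n-1-i}Bu_i$
and reduce both statements to an index-splitting argument on this finite sum.

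For the equality, I would split the sum defining a generic element of $\mathbf{R}_{k+l}(0)$ at index $i=l$. The lower block $\sum_{i=0}^{l-1}A^{k+l-1-i}Bu_i$ factors as $A^k\sum_{i=0}^{l-1}A^{l-1-i}Bu_i$, which lies in $A^k\mathbf{R}_l(0)$. The upper block $\sum_{i=l}^{k+l-1}A^{k+l-1-i}Bu_i$, after the reindexing $j=i-l$, becomes $\sum_{j=0}^{k-1}A^{k-1-j}Bu_{j+l}$, which lies in $\mathbf{R}_k(0)$. This gives the inclusion $\mathbf{R}_{k+l}(0)\subset\mathbf{R}_k(0)+A^k\mathbf{R}_l(0)$. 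The converse inclusion is obtained by concatenation: given $y_1=\varphi(k,0,v)$ with $v\in U^k$ and $y_2=\varphi(l,0,w)$ with $w\in U^l$, define a control $u\in U^{k+l}$ by $u_i=w_i$ for $0\le i<l$ and $u_i=v_{i-l}$ for $l\le i<k+l$; plugging this into the formula recovers exactly $y_1+A^ky_2$.

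For the inclusion of interiors, I would simply translate: if $y_1\in\mathrm{int}\,\mathbf{R}_k(0)$, pick an open neighborhood $N(y_1)\subset\mathbf{R}_k(0)$; then for any $y_2\in\mathbf{R}_l(0)$ the set $N(y_1)+A^ky_2$ is open in $\mathbb{K}^d$ (as a translate of an open set) and, by the equality already proved, is contained in $\mathbf{R}_k(0)+A^k\mathbf{R}_l(0)=\mathbf{R}_{k+l}(0)$, placing $y_1+A^ky_2$ in its interior.

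There is essentially no obstacle here: the argument is pure bookkeeping with the closed-form solution. The only point requiring mild care is lining up the exponents of $A$ correctly when splitting the sum, and making sure the concatenated control sequence has length exactly $k+l$ with the blocks ordered so that the $w$-block is applied first (reaching $A^ky_2$ from the origin by time $k$, i.e., waiting) and the $v$-block afterwards.
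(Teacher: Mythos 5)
Your argument is correct and essentially identical to the paper's: both hinge on splitting the variation-of-constants sum at index $l$, concatenating the $l$-long control in front of the $k$-long one, and matching exponents of $A$ so that the first block contributes $A^k$ times an element of $\mathbf{R}_l(0)$. For the interior inclusion the paper is more terse ("the set on the left is open, hence contained in the interior of the right"), while you spell out the translation of a neighborhood; these are the same observation.
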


\begin{proof}
Let $x_{1}\in\mathbf{R}_{k}(0)$ and $x_{2}\in\mathbf{R}_{l}(0)$. Then there
are $u,v\in\mathcal{U}$ such that
\[
x_{1}=\sum_{i=0}^{k-1}A^{k-1-i}Bu_{i}\ \mbox{and}\ x_{2}=\sum_{i=0}%
^{l-1}A^{l-1-i}Bv_{i}.
\]
Define
\[
w_{i}=\left\{
\begin{array}
[c]{rcl}%
v_{i}, & \mbox{if} & 0\leq i\leq l-1\\
u_{i-l}, & \mbox{if} & l\leq i\leq k+l-1
\end{array}
\right.  .
\]
Then
\begin{align*}
\varphi(k+l,0,w)  &  =\sum_{i=0}^{k+l-1}A^{k+l-1-i}Bw_{i}=\sum_{i=0}%
^{l-1}A^{k+l-1-i}Bw_{i}+\sum_{i=l}^{k+l-1}A^{k+l-1-i}Bw_{i}\\
&  =A^{k}\sum_{i=0}^{l-1}A^{l-1-i}Bv_{i}+\sum_{i=0}^{k-1}A^{k-1-i}Bu_{i}%
=A^{k}x_{2}+x_{1}.
\end{align*}
Hence $x_{1}+A^{k}x_{2}=\varphi(k+l,0,w)\in\mathbf{R}_{l+k}(0)$. The converse
inclusion follows by reversing these steps. The second assertion follows since
the set on left hand side is open.
\end{proof}

Define the time reversed counterpart of system (\ref{linsys}) by
\begin{equation}
x_{k+1}=A^{-1}x_{k}-A^{-1}Bu_{k},\ \ u_{k}\in U\subset\mathbb{K}%
^{m}.\label{linsys_rev}%
\end{equation}
The reachable and controllable sets from the origin at time $k$ for this
system are denoted by $\mathbf{R}_{k}^{-}(0)$ and $\mathbf{C}_{k}^{-}(0)$, respectively.

\begin{proposition}
\label{prop1.1.8}The reachable and controllable sets for system (\ref{linsys})
and the time reversed system (\ref{linsys_rev}) satisfy for all $k\in
\mathbb{N}$%
\[
\mathbf{R}_{k}(0)=\mathbf{C}_{k}^{-}(0)\text{ and }\mathbf{C}_{k}%
(0)=\mathbf{R}_{k}^{-}(0)\text{.}%
\]

\end{proposition}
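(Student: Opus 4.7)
My plan is to derive explicit representations for the reachable and controllable sets from $0$ using the variation-of-constants formula, and then read off both identities by a change of indices in the control sequence. The underlying intuition is that the time-reversed system \eqref{linsys_rev} runs the trajectories of \eqref{linsys} backward in time, with the control sequence correspondingly reversed; because the set $U^{k}$ is invariant under any permutation of its factors, this reversal is invisible at the level of parametrized sets.

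Concretely, I would first iterate \eqref{linsys_rev} to obtain the solution
$$\varphi^{-}(k,x,u) = A^{-k}x - \sum_{i=0}^{k-1} A^{-(k-i)} B u_{i}$$
of the time-reversed system. Combined with the given formula for $\varphi(k,x,u)$, this lets me write all four sets as finite sums of the form $\sum_{i} A^{\alpha(i)} B u_{i}$ with $u \in U^{k}$. For the identity $\mathbf{R}_{k}(0) = \mathbf{C}_{k}^{-}(0)$, the condition $\varphi^{-}(k,y,v) = 0$ solves to $y = \sum_{i=0}^{k-1} A^{i} B v_{i}$, whereas $\mathbf{R}_{k}(0) = \{\sum_{i=0}^{k-1} A^{k-1-i} B u_{i} : u \in U^{k}\}$; the substitution $u_{i} := v_{k-1-i}$ converts one expression into the other, and since reversing the order of a sequence in $U^{k}$ produces another sequence in $U^{k}$, the two sets coincide. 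The second identity $\mathbf{C}_{k}(0) = \mathbf{R}_{k}^{-}(0)$ is established by the same strategy: solving $\varphi(k,y,u) = 0$ for $y$ yields a sum with coefficients $A^{-(i+1)}B$, which matches the direct formula for $\mathbf{R}_{k}^{-}(0)$ under the same index reversal.

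The only real obstacle is careful bookkeeping of indices under time reversal; no deeper structural argument is required, since invertibility of $A$ is already built into the setting and the crucial point is simply that a control sequence in $U^{k}$ is admissible if and only if its reverse is. Note that symmetry of $U$ is not needed, because the reindexing is a permutation rather than a sign change.
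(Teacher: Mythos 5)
Your proof is correct and follows essentially the same route as the paper: write down the variation-of-constants formulas for both systems, solve the endpoint condition for the initial state, and observe that the index reversal $i \mapsto k-1-i$ identifies the two parametrizations while leaving $U^{k}$ invariant. The paper proves $\mathbf{C}_{k}(0)=\mathbf{R}_{k}^{-}(0)$ explicitly and dispatches the other identity ``analogously,'' whereas you sketch both; the remark that only permutation-invariance (not symmetry) of $U$ is used is a correct and harmless addition.
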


\begin{proof}
Note that $x\in\mathbf{C}_{k}(0)$ if and only if there is $u\in\mathcal{U}$
with
\[
\ A^{k}x+\sum_{i=0}^{k-1}A^{k-1-i}Bu_{i}=0\text{, i.e., }\ x=-\sum_{i=0}%
^{k-1}A^{-1-i}Bu_{i}.
\]
For any $u\in U^{k}$, we define $v_{j}=u_{k-1-j}$, $0\leq j\leq k-1$. Then
\begin{align*}
x  &  =-\sum_{i=0}^{k-1}A^{-1-i}Bu_{i}=-\sum_{j=0}^{k-1}A^{-1-(k-1-j)}%
Bu_{k-1-j}=-\sum_{j=0}^{k-1}(A^{-1})^{k-j}Bv_{j}\\
&  =-\sum_{j=0}^{k-1}(A^{-1})^{k-1-j}A^{-1}Bv_{j}=\sum_{j=0}^{k-1}%
(A^{-1})^{k-1-j}(-A^{-1}B)v_{j}.
\end{align*}
Hence we conclude that $x\in\mathbf{C}_{k}(0)$ if and only if there exists a
control $v\in U^{k}$ such that $x=\varphi^{-}(k,0,v)$, where $\varphi^{-}$ is
the solution of (\ref{linsys_rev}). This proves that $\mathbf{C}%
_{k}(0)=\mathbf{R}_{k}^{-}(0)$. The other equality follows analogously.
\end{proof}

\begin{lemma}
\label{lemma_long}If $(A,B)$ is controllable, there is $\delta>0$ such that
the ball $B_{\delta}(0)$ satisfies $B_{\delta}(0)\subset\mathrm{int}%
\mathbf{R}_{d-1}(0)$. Furthermore, $\mathbf{R}_{n}(0)\subset\mathbf{R}_{m}(0)$
for $m\geq n$.
\end{lemma}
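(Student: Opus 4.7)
The plan is to treat the two assertions in turn, both reducing to standard linear-algebraic observations about the reachable set regarded as the image of a linear map.

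For the first assertion, I introduce the linear map
\[
\Phi_k\colon (\mathbb{K}^m)^k \to \mathbb{K}^d, \qquad \Phi_k(u_0,\ldots,u_{k-1}) = \sum_{i=0}^{k-1} A^{k-1-i} B u_i,
\]
so that $\mathbf{R}_k(0) = \Phi_k(U^k)$. Its range equals the column span of $[B,AB,\ldots,A^{k-1}B]$, which by the Kalman rank criterion together with the Cayley--Hamilton theorem is all of $\mathbb{K}^d$ at the index dictated by controllability of $(A,B)$. Consequently $\Phi_{d-1}$ is surjective. Since $U$ is a neighborhood of $0$ and $U = \overline{\mathrm{int}\,U}$, the set $\mathrm{int}\,U^{d-1}$ is an open neighborhood of $0$ in $(\mathbb{K}^m)^{d-1}$; a surjective linear map between finite-dimensional spaces is open, so $\Phi_{d-1}(\mathrm{int}\,U^{d-1})$ is an open neighborhood of $0 \in \mathbb{K}^d$ contained in $\mathbf{R}_{d-1}(0)$, and any Euclidean ball inside it yields the desired $\delta$.

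For the second assertion, I apply Proposition \ref{proposition_two} with $k=n$ and $l=m-n$ to obtain $\mathbf{R}_m(0) = \mathbf{R}_n(0) + A^n \mathbf{R}_{m-n}(0)$. Because $0 \in U$, the constant zero control is admissible and produces $0 \in \mathbf{R}_{m-n}(0)$, hence $0 \in A^n \mathbf{R}_{m-n}(0)$; adding this zero gives $\mathbf{R}_n(0) = \mathbf{R}_n(0) + \{0\} \subset \mathbf{R}_m(0)$. One can equivalently prepend $m-n$ zero controls to any control reaching a point of $\mathbf{R}_n(0)$ at time $n$, which leaves the trajectory at the origin during the first $m-n$ steps and then runs the given control.

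The only substantive ingredient is the linear-algebraic input ensuring that the column span $[B,AB,\ldots,A^{k-1}B]$ already fills $\mathbb{K}^d$ at the stated index, via the Kalman criterion and Cayley--Hamilton; openness of surjective linear maps and the additive structure from Proposition \ref{proposition_two} are routine.
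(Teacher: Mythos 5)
Your treatment of the second assertion (monotonicity of $\mathbf{R}_n(0)$ in $n$) is correct; invoking Proposition \ref{proposition_two} with $0\in\mathbf{R}_{m-n}(0)$, or equivalently prepending $m-n$ zero controls, is exactly the paper's observation that $0$ is an equilibrium for $u=0$.

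The first part, however, contains an off-by-one slip that makes the key step a non sequitur. By your own computation the range of $\Phi_{k}$ is $\operatorname{span}[B,AB,\ldots,A^{k-1}B]$, so the Kalman rank criterion (with Cayley--Hamilton ensuring the reachable subspace saturates at step $d$) yields surjectivity of $\Phi_{d}$, whose range is $\operatorname{span}[B,AB,\ldots,A^{d-1}B]$. The map $\Phi_{d-1}$ only has range $\operatorname{span}[B,AB,\ldots,A^{d-2}B]$, and controllability of $(A,B)$ does not force that to be all of $\mathbb{K}^{d}$. A concrete counterexample: take $d=2$, $m=1$, $A=\left[\begin{smallmatrix}0&1\\1&0\end{smallmatrix}\right]$, $B=\left[\begin{smallmatrix}1\\0\end{smallmatrix}\right]$. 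Then $[B\ AB]$ has rank $2$, so $(A,B)$ is controllable, yet $\mathbf{R}_{d-1}(0)=\mathbf{R}_{1}(0)=BU$ lies on a line and has empty interior. Hence the sentence ``Consequently $\Phi_{d-1}$ is surjective'' does not follow from the preceding argument. (The lemma as printed appears to carry the same off-by-one; with $d$ in place of $d-1$ throughout, your argument --- surjectivity of $\Phi_{d}$, openness of surjective linear maps, and $0\in\operatorname{int}U^{d}$ --- is correct and complete, and it merely makes explicit the computation the paper compresses into one line.)
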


\begin{proof}
Since the control range is a neighborhood of $0$, controllability implies that
there is $\delta>0$ with $B_{\delta}(0)\subset\mathrm{int}\mathbf{R}_{d-1}%
(0)$. The second assertion follows since $0$ is an equilibrium for $u=0$.
\end{proof}

\begin{proposition}
\label{prop1.1.11}If $(A,B)$ is controllable, the reachable set of system
(\ref{linsys}) satisfies $\overline{\mathbf{R}(0)}=\overline{\mathrm{int}%
\mathbf{R}(0)}$.
\end{proposition}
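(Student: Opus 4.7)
The plan is to prove two inclusions; the nontrivial one is $\overline{\mathbf{R}(0)}\subset\overline{\mathrm{int}\mathbf{R}(0)}$, since $\mathrm{int}\mathbf{R}(0)\subset\mathbf{R}(0)$ gives the other direction immediately. So I take an arbitrary $x\in\mathbf{R}(0)$, say $x\in\mathbf{R}_n(0)$, and try to approximate it by points in $\mathrm{int}\mathbf{R}(0)$.

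The natural tool is Proposition \ref{proposition_two}, which in particular states
\[
\mathbf{R}_n(0)+A^n\mathbf{R}_{d-1}(0)=\mathbf{R}_{n+d-1}(0).
\]
Writing $x=x+A^n\cdot 0$ only produces $x$ itself, not an interior point. The idea is to perturb the zero summand: by Lemma \ref{lemma_long}, $0$ lies in $\mathrm{int}\mathbf{R}_{d-1}(0)$, so I can pick a sequence $y_j\in\mathrm{int}\mathbf{R}_{d-1}(0)$ with $y_j\to 0$. Then $x+A^ny_j\to x$, and it remains to argue that each $x+A^ny_j$ actually lies in $\mathrm{int}\mathbf{R}_{n+d-1}(0)\subset\mathrm{int}\mathbf{R}(0)$.

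For this I would observe that $A^n$ is a linear isomorphism, so $A^n\mathrm{int}\mathbf{R}_{d-1}(0)=\mathrm{int}(A^n\mathbf{R}_{d-1}(0))$ is open. Therefore the Minkowski sum
\[
\mathbf{R}_n(0)+A^n\mathrm{int}\mathbf{R}_{d-1}(0)
\]
is a union of translates of an open set and hence open. Being contained in $\mathbf{R}_n(0)+A^n\mathbf{R}_{d-1}(0)=\mathbf{R}_{n+d-1}(0)$, it lies inside $\mathrm{int}\mathbf{R}_{n+d-1}(0)$. In particular $x+A^ny_j$ is in this interior for every $j$, giving $x\in\overline{\mathrm{int}\mathbf{R}(0)}$.

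The only subtle point is that the second inclusion of Proposition \ref{proposition_two}, as stated, requires $\mathrm{int}\mathbf{R}_k(0)$ to be nonempty in the summand indexed by $k$, which we do not a priori know for $k=n$. The workaround above sidesteps this by putting the interior assertion on the $d-1$ summand (which is nonempty by Lemma \ref{lemma_long}) and using that adding any set to an open set yields an open set, together with invertibility of $A^n$. This is the only real idea in the argument; the rest is routine.
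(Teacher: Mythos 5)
Your proof is correct. You rely on the Minkowski-sum identity from Proposition \ref{proposition_two} and the fact that $0\in\mathrm{int}\mathbf{R}_{d-1}(0)$ from Lemma \ref{lemma_long}; the key observation that a sum of an arbitrary set and an open set is open, together with invertibility of $A^{n}$, lets you place points $x+A^{n}y_{j}$ with $y_{j}\to 0$ inside $\mathrm{int}\mathbf{R}_{n+d-1}(0)\subset\mathrm{int}\mathbf{R}(0)$, and you correctly flag the subtlety that Proposition \ref{proposition_two}'s second assertion puts the interior on the wrong factor and work around it. The paper's proof is genuinely different in flavor: it argues dynamically, first showing $\mathbf{R}(y)\subset\mathrm{int}\mathbf{R}(0)$ for $y\in\mathrm{int}\mathbf{R}(0)$ by using that each $\varphi_{k,u}$ is a diffeomorphism (again because $A$ is invertible), then for $x\in\overline{\mathbf{R}(0)}$ it picks $y\in\mathbf{R}(0)$ near $x$, writes $y=\varphi(k,0,u)$, and uses continuity of $\varphi_{k,u}$ near $0\in\mathrm{int}\mathbf{R}(0)$ to produce interior points of $\mathbf{R}(0)$ arbitrarily close to $x$. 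Your approach is more algebraic and perhaps more elementary, exploiting the additive/semigroup structure of the reachable sets and avoiding any explicit appeal to the flow maps being homeomorphisms; the paper's approach is more dynamical and arguably illustrates a pattern that generalizes beyond the linear setting. Both use the same two essential ingredients: invertibility of $A$ and $0\in\mathrm{int}\mathbf{R}_{d-1}(0)$ from controllability.
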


\begin{proof}
The inclusion $\overline{\mathrm{int}\mathbf{R}(0)}\subset\overline
{\mathbf{R}(0)}$ holds trivially. For the converse we first show that
$\mathbf{R}(y)\subset\mathrm{int}\mathbf{R}(0)$ for $y\in\mathrm{int}%
\mathbf{R}(0)$. In fact, let there exists a neighborhood $V_{y}$ of $y\ $such
that $V_{y}\subset\mathbf{R}(0)$. Given $z\in\mathbf{R}(y)$, there are
$k\in\mathbb{N}$ and $u\in\mathcal{U}$ such that $z=\varphi(k,y,u)$. Since
$A\in Gl(d,\mathbb{R})$, the map $\varphi_{k,u}$ is a diffeomorphism and we
have that $\varphi_{k,u}(V_{y})$ is a neighborhood of $z$ and clearly
$\varphi_{k,u}(\mathbf{R}(0))\subset\mathbf{R}(0)$. So $z\in\varphi
_{k,u}(V_{y})\subset\mathbf{R}(0)$, which shows that $z\in\mathrm{int}%
\mathbf{R}(0)$.

Now, let $x\in\overline{\mathbf{R}(0)}$ and $V$ a neighborhood of $x$. There
is $y\in\mathbf{R}(0)$ such that $y\in V$, so there are $k\in\mathbb{N}$ and
$u\in\mathcal{U}$ such that $y=\varphi(k,0,u)$. Since $0\in\mathrm{int}%
\mathbf{R}(0)$ there exists a neighborhood $W$ of $0$ such that $W\subset
\mathrm{int}\mathbf{R}(0)$ and $\varphi_{k,u}(W)\subset V$ by continuity of
$\varphi_{k,u}$. For $z\in W$ the arguments above show that $\mathbf{R}%
(z)\subset\mathrm{int}\mathbf{R}(0)$ and it follows that
\[
\varphi(k,z,u)\in V\cap\mathbf{R}(z)\subset V\cap\mathrm{int}\mathbf{R}(0)
\]
and hence $x\in\overline{\mathrm{int}\mathbf{R}(0)}$.
\end{proof}

We will need the following lemmas.

\begin{lemma}
\label{lemma2}For every $\lambda\in\mathbb{C}$ there are $n_{k}\rightarrow
\infty$ such that $\frac{\lambda^{n_{k}}}{\left\vert \lambda\right\vert
^{n_{k}}}\rightarrow1$, and, in particular,
\[
\frac{\operatorname{Im}(\lambda^{n_{k}})}{\operatorname{Re}(\lambda^{n_{k}}%
)}\rightarrow0\text{ for }k\rightarrow\infty.
\]

\end{lemma}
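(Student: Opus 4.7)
The plan is to reduce the statement to a density statement about rotations of the unit circle. Writing $\lambda$ in polar form as $\lambda = r e^{i\theta}$ with $r = |\lambda| \geq 0$ and $\theta \in [0,2\pi)$, we have
\[
\frac{\lambda^{n}}{|\lambda|^{n}} = e^{i n \theta}
\]
for every $n \geq 1$ (provided $\lambda \neq 0$; the case $\lambda = 0$ is trivial since then the stated ratio is undefined or one may interpret the claim vacuously, and the second part also holds by convention). So the first assertion becomes: there exists a sequence $n_k \to \infty$ with $e^{i n_k \theta} \to 1$.

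To establish this, I would split into cases according to whether $\theta/(2\pi)$ is rational. If $\theta = 0$ we just take $n_k = k$. If $\theta/(2\pi) = p/q \in \mathbb{Q}$, then $e^{i n \theta} = 1$ for every multiple $n$ of $q$, so $n_k := kq$ works. If $\theta/(2\pi)$ is irrational, then by the classical Kronecker/Weyl density argument the orbit $\{e^{i n \theta} : n \in \mathbb{N}\}$ is dense on the unit circle $S^{1}$. One way to see this cleanly is to apply Dirichlet's pigeonhole principle: for any $N$, partitioning $S^{1}$ into $N$ arcs forces two of the points $e^{i \theta}, e^{2i\theta}, \dotsc, e^{(N+1)i\theta}$ to lie in a common arc, producing an $m \in \{1,\dotsc,N\}$ with $|e^{i m \theta} - 1| < 2\pi/N$. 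Letting $N \to \infty$ yields a subsequence of positive integers $n_k$ (which may be taken strictly increasing, hence $n_k \to \infty$) with $e^{i n_k \theta} \to 1$.

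Once the first assertion is in hand, the second is immediate: if $e^{i n_k \theta} \to 1$ then $\operatorname{Re}(\lambda^{n_k})/|\lambda|^{n_k} = \cos(n_k \theta) \to 1$ and $\operatorname{Im}(\lambda^{n_k})/|\lambda|^{n_k} = \sin(n_k \theta) \to 0$, so the ratio
\[
\frac{\operatorname{Im}(\lambda^{n_k})}{\operatorname{Re}(\lambda^{n_k})} = \frac{\sin(n_k \theta)}{\cos(n_k \theta)}
\]
tends to $0$. There is no real obstacle here; the only non-routine ingredient is the density of $\{n\theta \bmod 2\pi\}$, which is a standard pigeonhole argument as sketched above, so the proof is essentially a one-paragraph invocation of this fact after the polar decomposition.
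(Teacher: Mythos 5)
Your proof is correct and follows essentially the same route as the paper's: polar decomposition, then a case split on whether $\theta/(2\pi)$ is rational or irrational, with density of $\{n\theta \bmod 2\pi\}$ handling the irrational case. You are slightly more careful than the paper in two places: you supply the Dirichlet pigeonhole argument that the paper leaves implicit, and in the rational case $\theta/(2\pi)=p/q$ you observe that one must take the full sequence $n_k = kq$ (the paper only exhibits a single $n$ with $n\theta\in 2\pi\mathbb{Z}$, which does not by itself give $n_k\to\infty$).
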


\begin{proof}
There is $\theta\in\lbrack0,2\pi)$ with $\lambda=\left\vert \lambda\right\vert
(\cos\theta+\imath\sin\theta)$, hence%
\[
\lambda^{n}=\left\vert \lambda\right\vert ^{n}(\cos(n\theta)+\imath
\sin(n\theta)).
\]
If $\theta\in2\pi\mathbb{Q}$, there are $n,N\in\mathbb{N}$ with $n\theta
=N2\pi$, hence $\lambda^{n}=\left\vert \lambda\right\vert ^{n}\cos
(N2\pi)=\left\vert \lambda\right\vert ^{n}$. Else, there are $n_{k}%
\rightarrow\infty$ such that modulo $2\pi$ one has $n_{k}\theta\rightarrow0$.
This implies $\cos(n_{k}\theta)\rightarrow1$ and $\sin(n_{k}\theta
)\rightarrow0$, hence%
\[
\frac{\lambda^{n_{k}}}{\left\vert \lambda\right\vert ^{n_{k}}}=\cos
(n_{k}\theta)+\imath\sin(n_{k}\theta)\rightarrow1.
\]
This implies%
\[
\frac{\operatorname{Im}(\lambda^{n_{k}})}{\operatorname{Re}(\lambda^{n_{k}}%
)}=\frac{\operatorname{Im}\left(  \frac{\lambda^{n_{k}}}{\left\vert
\lambda\right\vert ^{n_{k}}}\right)  }{\operatorname{Re}\left(  \frac
{\lambda^{n_{k}}}{\left\vert \lambda\right\vert ^{n_{k}}}\right)  }=\frac
{\sin(n_{k}\theta)}{\cos(n_{k}\theta)}\rightarrow0.
\]

\end{proof}

The next lemma states a property of convex sets.

\begin{lemma}
\label{Lemma_cone}If $C$ is an open convex subset of $\mathbb{K}^{n}$ and
$Y\subset C$ a subspace, then $C=C+Y$.
\end{lemma}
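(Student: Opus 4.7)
The inclusion $C\subseteq C+Y$ is immediate since $0\in Y$, so the task is to prove $C+Y\subseteq C$. Fix $x\in C$ and $y\in Y$; the target is $x+y\in C$. My strategy is to overshoot---produce $x+2y\in\overline{C}$---so that $x+y$ becomes an interior point of the segment from $x\in C$ to $x+2y\in\overline{C}$, and then invoke the standard open-segment principle for convex sets (any point strictly between an interior point and a closure point of a convex set lies in the interior).

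For the overshoot step, the subspace property of $Y$ yields $2ny\in Y\subset C$ for every $n\in\mathbb{N}$, and convexity gives
\[
(1-\tfrac{1}{n})x+\tfrac{1}{n}(2ny)=(1-\tfrac{1}{n})x+2y\in C.
\]
Letting $n\rightarrow\infty$ produces $x+2y\in\overline{C}$. Since $x$ is an interior point of $C$ and $x+2y\in\overline{C}$, the midpoint $x+y$ lies in $C$ by the open-segment principle. Concretely, if $\rho>0$ satisfies $B_{\rho}(x)\subset C$ and $w_{k}\in C$ converges to $x+2y$, then the balls
\[
B_{\rho/2}\bigl(\tfrac{1}{2}(x+w_{k})\bigr)=\tfrac{1}{2}\bigl(B_{\rho}(x)+w_{k}\bigr)
\]
lie in $C$ by convexity, while their centers converge to $x+y$; hence for $k$ large enough, $x+y$ sits in one such ball and therefore in $C$.

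The main obstacle is not showing $x+y\in\overline{C}$---a one-shot convex combination with the vectors $ny\in Y\subset C$ already achieves that---but landing inside the open set $C$ itself. Overshooting to $x+2y\in\overline{C}$ and pairing it with the full ball $B_{\rho}(x)\subset C$, rather than with the single point $x$, is precisely what promotes a potential boundary-adjacent point into an honest interior point.
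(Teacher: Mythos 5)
Your proof is correct. The paper states Lemma \ref{Lemma_cone} without proof, treating it as a standard fact about convex sets, so there is no official argument to compare against. Your two-stage device is sound: the convex combinations $(1-\tfrac{1}{n})x + \tfrac{1}{n}(2ny)$ are legitimate since $2ny \in Y \subset C$, and letting $n\to\infty$ correctly places $x+2y$ in $\overline{C}$. The open-segment principle (interior point plus closure point gives interior points strictly between) is exactly what promotes $x+y$ from a closure point to a genuine member of the open set $C$, and your explicit ball argument $B_{\rho/2}\bigl(\tfrac{1}{2}(x+w_k)\bigr) = \tfrac{1}{2}\bigl(B_\rho(x)+w_k\bigr) \subset C$ is a correct self-contained justification of that principle. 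The closing remark about why overshooting to $x+2y$ is necessary---rather than aiming directly at $x+y$, which would only land in $\overline{C}$---shows you understand the real content of the lemma.
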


The following theorem describes the general structure of reachable and
controllable sets. It is analogous to a well known property of linear systems
in continuous time, cf. Sontag \cite[Section 3.6]{Son98} and Hinrichsen and
Pritchard \cite[Theorem 6.2.15]{HiP20}; the proof for discrete-time systems,
however, is more involved. Recall that the state space $\mathbb{K}^{d}$ can be
decomposed with respect to $A$ into the direct sum of the stable subspace
$E^{s}$, the center space $E^{c}$ and the unstable subspace $E^{u}$ which are
the direct sums of all generalized (real) eigenspaces for the eigenvalues
$\lambda$ of $A$ with $\left\vert \lambda\right\vert <1$, $\left\vert
\lambda\right\vert =1$ and $\left\vert \lambda\right\vert >1$, respectively.
Furthermore, we let $E^{uc}:=E^{u}\oplus E^{c}$ and $E^{sc}:=E^{s}\oplus
E^{c}$.

\begin{theorem}
\label{prop1.1.12} Consider the control system given by (\ref{linsys}) and
suppose that the system without control restriction is controllable.

(i) There exists a compact and convex set $K\subset E^{s}\subset\mathbb{K}%
^{d}$ with nonvoid interior with respect to $E^{s}$ such that $\overline
{\mathbf{R}(0)}=K+E^{uc}$. Moreover $0\in K$ and $E^{uc}\subset\mathrm{int}%
\mathbf{R}(0)$.

(ii) There exists a compact and convex set $F\subset E^{u}\subset
\mathbb{K}^{d}$ with nonvoid interior with respect to $E^{u}$ such that
$\overline{\mathbf{C}(0)}=F+E^{sc}$. Moreover $0\in F$ and $E^{sc}%
\subset\mathrm{int}\mathbf{C}(0)$.
\end{theorem}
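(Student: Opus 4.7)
The plan is to prove (i); then (ii) follows by applying (i) to the time-reversed system \eqref{linsys_rev} and invoking Proposition \ref{prop1.1.8}, since the stable, center, and unstable subspaces of $A^{-1}$ are $E^{u}$, $E^{c}$, and $E^{s}$, respectively. For (i), I would first record structural facts about $\overline{\mathbf{R}(0)}$: it is convex (the increasing union by Lemma \ref{lemma_long} of the convex sets $\mathbf{R}_{k}(0)$), contains a neighborhood of $0$ (Lemma \ref{lemma_long}), equals the closure of its interior (Proposition \ref{prop1.1.11}), and is forward $A$-invariant -- appending a zero control takes $y=\varphi(k,0,u)\in\mathbf{R}_{k}(0)$ to $Ay\in\mathbf{R}_{k+1}(0)$. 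Since $A$ is a homeomorphism, $\mathrm{int}\,\mathbf{R}(0)$ is also forward $A$-invariant. Second, using $\|A^{j}|_{E^{s}}\|\le C\rho^{j}$ with $\rho<1$, the projection $\pi_{s}\mathbf{R}_{k}(0)=\sum_{j=0}^{k-1}A^{j}\pi_{s}BU$ is uniformly bounded in $k$ by a geometric series, so $K:=\pi_{s}\overline{\mathbf{R}(0)}$ is a compact convex subset of $E^{s}$.

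The main step is $E^{uc}\subset\mathrm{int}\,\mathbf{R}(0)$. For $v\in E^{u}$ this is immediate: $A^{-k}v\to 0\in\mathrm{int}\,\mathbf{R}(0)$, so $A^{-k}v\in\mathrm{int}\,\mathbf{R}(0)$ for $k$ large, and forward $A$-invariance yields $v=A^{k}(A^{-k}v)\in\mathrm{int}\,\mathbf{R}(0)$. The difficulty is $v\in E^{c}$, where $A^{-k}v$ does not decay. Working over $\mathbb{C}$, I would start with an eigenvector $v$ of $A|_{E^{c}}$ for $\lambda$, $|\lambda|=1$. By controllability write $v=\sum_{q=0}^{d-1}A^{q}B\xi_{q}$; setting $\mu:=\lambda^{d}$, for large $n$ define the resonance-spread control
\[
u_{pd+q}:=\mu^{p-n+1}\,n^{-1}\,\xi_{d-1-q},\qquad p=0,\ldots,n-1,\ q=0,\ldots,d-1.
\]
A direct computation using $|\mu|=1$ and $A^{md}v=\mu^{m}v$ gives $\varphi(nd,0,u)=v$ exactly, while $|u_{i}|=O(n^{-1})$ places $u$ in $\mathrm{int}(U^{nd})$ for $n$ large; by controllability, $\partial G_{nd}/\partial u$ has rank $d$, so $(0,u)$ is a regular pair and $v\in\mathrm{int}\,\mathbf{R}(0)$. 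For non-semisimple Jordan blocks on the unit circle the nilpotent part contributes polynomial-in-$n$ corrections that must be cancelled; I would do so inductively over the Jordan filtration, adding polynomial-in-$p$ perturbations to the phase factor and using Lemma \ref{lemma2} to arrange that the residual resonances vanish along a subsequence $n_{k}$. The real case $\mathbb{K}=\mathbb{R}$ reduces to the complex case by pairing complex-conjugate eigenvalues and taking real/imaginary parts. The main obstacle is precisely this Jordan refinement, since the plain resonance construction only captures the top eigenvector of a block.

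Once $E^{uc}\subset\mathrm{int}\,\mathbf{R}(0)$ is established, Lemma \ref{Lemma_cone} applied with $C=\mathrm{int}\,\mathbf{R}(0)$ and $Y=E^{uc}$ yields $\mathrm{int}\,\mathbf{R}(0)+E^{uc}=\mathrm{int}\,\mathbf{R}(0)$. Taking closures and using Proposition \ref{prop1.1.11}, together with the inclusion $\overline{X+E^{uc}}\supset\overline{X}+E^{uc}$ (valid since $E^{uc}$ is a closed subspace), we obtain $\overline{\mathbf{R}(0)}+E^{uc}\subset\overline{\mathbf{R}(0)}$, so $E^{uc}$ lies in the lineality space of $\overline{\mathbf{R}(0)}$. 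Hence $\overline{\mathbf{R}(0)}=K+E^{uc}$ with $K=\pi_{s}\overline{\mathbf{R}(0)}=\overline{\mathbf{R}(0)}\cap E^{s}$ the compact convex set from the first paragraph. The ball $B_{\delta}(0)\subset\mathbf{R}_{d-1}(0)$ from Lemma \ref{lemma_long} projects onto a neighborhood of $0$ in $E^{s}$, so $K$ has nonvoid interior in $E^{s}$, and $0\in K$ follows from $0\in\overline{\mathbf{R}(0)}$. Part (ii) then follows as indicated.
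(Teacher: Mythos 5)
Your overall organization is sound and several of your shortcuts are genuine improvements: the direct contraction argument $v=A^{k}(A^{-k}v)$ for $v\in E^{u}$ is cleaner than the paper's uniform treatment of $|\lambda|\geq 1$, the resonance-spread control $u_{pd+q}=\mu^{p-n+1}n^{-1}\xi_{d-1-q}$ correctly reaches a semisimple center eigenvector $v$ in time $nd$ with $\|u\|_{\infty}=O(1/n)$ (the computation $\varphi(nd,0,u)=\tfrac{1}{n}\sum_{m=0}^{n-1}\mu^{-m}A^{md}v=v$ checks out), and the final structural steps (lineality of $E^{uc}$, $K=\overline{\mathbf{R}(0)}\cap E^{s}$ bounded via the contraction on $E^{s}$, nonvoid relative interior from Lemma \ref{lemma_long}) match the paper modulo bookkeeping. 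The reduction of (ii) to (i) via Proposition \ref{prop1.1.8} and of $\mathbb{K}=\mathbb{R}$ to $\mathbb{K}=\mathbb{C}$ by complexification is also the paper's route.

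The gap you flag in the non-semisimple center case is, however, not a mere technicality, and the fix you sketch is unlikely to work as stated. If $w\in E_{q}(\lambda)$ with $q>1$, then $A^{md}w=\sum_{j=0}^{q-1}\binom{md}{j}\lambda^{md-j}(A-\lambda I)^{j}w$, so after inserting your control the unwanted terms have coefficients $\tfrac{1}{n}\sum_{m=0}^{n-1}c_{n-1-m}\binom{md}{j}\lambda^{md-j}$ which grow like $n^{j}$ when $c_{n-1-m}=\lambda^{-md}$. Trying to kill them by replacing $c_{n-1-m}$ with $\lambda^{-md}P(m)$ for a polynomial $P$ forces $\deg P\geq q-1$, and then $|u_{pd+q'}|\sim|P(n-1-p)|/n$ is no longer $O(1/n)$; for $q\geq 2$ the controls leave $U$ as $n\to\infty$. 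So cancellation is the wrong goal. The paper avoids it entirely: inducting on $q$, it writes $\lambda^{n}aw=A^{n}aw-z(n)$ with $z(n)\in E_{q-1}(\lambda)$, observes $A^{n}aw\in\mathrm{int}\,\mathbf{R}_{n+d-1}(0)$ by forward invariance, and then \emph{absorbs} $z(n)$ using Lemma \ref{Lemma_cone} because $E_{q-1}(\lambda)\subset\mathrm{int}\,\mathbf{R}(0)$ is already established by induction. Lemma \ref{lemma2} is then used only to choose phases $a_{n_{k}}$ so that the $\lambda^{n_{k}}a_{n_{k}}$ are real and bounded below, and Proposition \ref{proposition_two} lets one sum finitely many such terms to exceed $1$, after which convexity gives $w\in\mathrm{int}\,\mathbf{R}(0)$. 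You would need to replace your "cancel the nilpotent corrections" plan by this kind of absorption argument (or an equivalent) to close the gap; as written your proof of $E^{c}\subset\mathrm{int}\,\mathbf{R}(0)$ only covers semisimple eigenvalues on the unit circle.
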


\begin{proof}
We will first prove the result for $\mathbb{K}=\mathbb{C}$.

(i) In the first step, we will show that $E^{uc}\subset\mathrm{int}%
\mathbf{R}(0)$. As $\mathbf{R}(0)$ is convex, its interior is convex too.
Therefore it suffices to prove that the generalized eigenspaces for
eigenvalues with absolute value greater than or equal to $1$ are contained in
$\mathrm{int}\mathbf{R}(0)$. Fix an eigenvalue $\lambda$ of $A$ with
$\left\vert \lambda\right\vert \geq1$ and let $E_{q}(\lambda)=\mathrm{ker}%
(A-\lambda I)^{q}$, $q\in\mathbb{N}_{0}$. It suffices to show that
$E_{q}(\lambda)\subset\mathrm{int}\mathbf{R}(0)$ for all $q$.

We prove the statement by induction on $q$, the case $q=0$ being trivial since
$E_{q}(\lambda)=\{0\}\subset\mathrm{int}\mathbf{R}(0)$. So assume that
$E_{q-1}(\lambda))\subset\mathrm{int}\mathbf{R}(0)$ and take any $w\in
E_{q}(\lambda)$. We must show that $w\in\mathrm{int}\mathbf{R}(0)$. By Lemma
\ref{lemma_long} there is $\delta>0$ such that $aw\in\mathrm{int}%
\mathbf{R}_{d-1}(0)$ for all $a\in\mathbb{C}$ with $\left\vert a\right\vert
<\delta$.

Note that for all $\left\vert a\right\vert <\delta$ and all $n\geq1$%
\begin{align*}
A^{n}aw  &  =(A-\lambda I+\lambda I)^{n}aw=\sum_{j=0}^{n}{\binom{n}{j}%
}(A-\lambda I)^{n-j}\lambda^{j}aw\\
&  =\lambda^{n}aw+\sum_{j=0}^{n-1}{\binom{n}{j}}(A-\lambda I)^{n-j}\lambda
^{j}aw.
\end{align*}
Since $aw\in E_{q}(\lambda)$, it follows that $(A-\lambda I)^{i}aw\in
E_{q-1}(\lambda)$ for all $i\geq1$, hence $z(n):=\sum_{j=0}^{n-1}{\binom{n}%
{j}}(A-\lambda I)^{n-j}\lambda^{j}aw\in E_{q-1}(\lambda),n\geq1$. Using
$aw\in\mathrm{int}\mathbf{R}_{d-1}(0)$ Lemma \ref{lemma_long} and Lemma
\ref{Lemma_cone} imply for$\ n\geq1$%
\begin{equation}
\lambda^{n}aw=A^{n}aw-z(n)\in A^{n}aw+E_{q-1}(\lambda)\subset\mathrm{int}%
\mathbf{R}_{n+d-1}(0)+E_{q-1}(\lambda)\subset\mathrm{int}\mathbf{R}(0).
\label{3.3}%
\end{equation}
We write%
\[
a=\alpha+\imath\beta\text{ and }\lambda^{n}=x_{n}+\imath y_{n}%
\]
with $\alpha,\beta\in\mathbb{R}$ and $x_{n},y_{n}\in\mathbb{R}^{d}$.

\textbf{Claim:} There are a sequence $(n_{k})_{k\in\mathbb{N}}$ with
$n_{k}\rightarrow\infty$ and $a_{n_{k}}\in\mathbb{C}$ with $\left\vert
a_{n_{k}}\right\vert <\delta$ such that $\lambda^{n_{k}}a_{n_{k}}\in
\mathbb{R}$.

In\ fact, we have%
\[
\lambda^{n}a=(x_{n}+\imath y_{n})(\alpha+\imath\beta)=x_{n}\alpha-y_{n}%
\beta+\imath(x_{n}\beta+y_{n}\alpha)\in\mathbb{R},
\]
if and only if $x_{n}\beta+y_{n}\alpha=0$.

Case (a): If $x_{n}=0$, one may choose $\alpha_{n}:=0$ and gets $\lambda
^{n}a_{n}=-y_{n}\beta_{n}\in\mathbb{R}$ for $\beta_{n}=\frac{\delta}{2}$ with
$\left\vert a_{n}\right\vert =\left\vert \beta_{n}\right\vert =\frac{\delta
}{2}$.

Case (b): Otherwise $\lambda^{n}a\in\mathbb{R}$ if and only if%
\[
\beta=-\alpha\frac{y_{n}}{x_{n}}=-\alpha\frac{\operatorname{Im}(\lambda^{n}%
)}{\operatorname{Re}(\lambda^{n})}.
\]
According to Lemma \ref{lemma2} there are $n_{k}\in\mathbb{N}$, arbitrarily
large, such that with $\alpha_{n_{k}}:=\frac{\delta}{2}$ and $\beta_{n_{k}%
}:=-\alpha_{n_{k}}\frac{y_{n_{k}}}{x_{n_{k}}}$
\[
\left\vert \beta_{n_{k}}\right\vert =\frac{\delta}{2}\left\vert \frac
{\operatorname{Im}(\lambda^{n_{k}})}{\operatorname{Re}(\lambda^{n_{k}}%
)}\right\vert <\frac{\delta}{2}.
\]
It follows for $a_{n_{k}}:=\alpha_{n_{k}}+\beta_{n_{k}}$ that%
\[
\left\vert a_{n_{k}}\right\vert ^{2}=\alpha_{n_{k}}^{2}+\beta_{n_{k}}%
^{2}<\frac{1}{4}\delta^{2}+\frac{1}{4}\delta^{2}\text{, and hence }\left\vert
a_{n_{k}}\right\vert <\delta.
\]
We have shown that with this choice of $a_{n_{k}}$ we have $\lambda^{n_{k}%
}a_{n_{k}}\in\mathbb{R}$ and the \textbf{Claim} is proved. Furthermore in case
(a), by $\left\vert \lambda\right\vert \geq1$,%
\[
\left\vert \lambda^{n}a_{n}\right\vert =\left\vert \lambda\right\vert
^{n}\left\vert a_{n}\right\vert \geq\left\vert a_{n}\right\vert =\frac{\delta
}{2},
\]
and in case (b)%
\[
\left\vert \lambda^{n_{k}}a_{n_{k}}\right\vert =\left\vert \lambda\right\vert
^{n_{k}}\left\vert a_{n_{k}}\right\vert \geq\left\vert a_{n_{k}}\right\vert
\geq\left\vert \alpha_{n_{k}}\right\vert =\frac{\delta}{2}.
\]
Now choose $\ell\in\mathbb{N}$ with $\ell\geq2/\delta$. Recall that all points
$a_{n_{k}}w\in\mathrm{int}\mathbf{R}_{d-1}(0)$. We may assume that $n_{2}\geq
n_{1}+d-1$, hence%
\[
A^{n_{1}}a_{n_{1}}w\in\mathrm{int}\mathbf{R}_{n_{1}+d-1}(0)\subset
\mathrm{int}\mathbf{R}_{n_{2}}(0).
\]
We may also assume that $n_{3}-n_{2}\geq n_{2}+d-1$, hence%
\[
A^{n_{2}}a_{n_{2}}w\in\mathrm{int}\mathbf{R}_{n_{2}+d-1}(0)\subset
\mathrm{int}\mathbf{R}_{n_{3}-n_{2}}(0).
\]
Thus Proposition \ref{proposition_two} implies%
\[
A^{n_{1}}a_{n_{1}}w+A^{n_{2}}a_{n_{2}}w\in\mathrm{int}\mathbf{R}_{n_{2}%
}(0)+A^{n_{2}}\mathbf{R}_{n_{3}-n_{2}}(0)\subset\mathrm{int}\mathbf{R}%
_{n_{3}-n_{2}+n_{2}}(0)=\mathrm{int}\mathbf{R}_{n_{3}}(0).
\]
Proceeding in this way, we finally arrive at%
\[
\sum_{k=1}^{\ell}A^{n_{k}}a_{n_{k}}w\in\mathrm{int}\mathbf{R}_{n_{\ell}}(0).
\]
Thus we find with (\ref{3.3}),%
\[
\sum_{k=1}^{\ell}\lambda^{n_{k}}a_{n_{k}}w=\sum_{k=1}^{\ell}\left[  A^{n_{k}%
}a_{n_{k}}w-z(n_{k})\right]  \in\mathrm{int}\mathbf{R}_{n_{\ell}}%
(0)+E_{q-1}(\lambda)\subset\mathrm{int}\mathbf{R}(0).
\]
If $\lambda^{n_{k}}a_{n_{k}}>0$ for all $k\in\{1,\dotsc,\ell\}$, then (the
real number)
\[
\sum_{k=1}^{\ell}\lambda^{n_{k}}a_{n_{k}}>\ell\cdot\delta/2\geq1.
\]
For the $k$ with $\lambda^{n_{k}}a_{n_{k}}<0$, replace $a_{n_{k}}$ by
$-a_{n_{k}}$, to get the same conclusion. This shows that $w$ is a convex
combination of the points $0$ and $\sum_{k=1}^{\ell}\lambda^{n_{k}}a_{n_{k}}w$
in $\mathrm{int}\mathbf{R}(0)$, thus convexity of this set implies
$w\in\mathrm{int}\mathbf{R}(0)$ completing the induction step $E_{q}%
(\lambda)\subset\mathrm{int}\mathbf{R}(0)$. Hence we have shown that
$E^{uc}\subset\mathrm{int}\mathbf{R}(0)$.\medskip

It remains to construct a set $K$ as in the assertion. Define $K_{0}%
:=\mathrm{int}\mathbf{R}(0)\cap E^{s}$. Then it follows that
\[
K_{0}+E^{uc}=(\text{$\mathrm{int}$}\mathbf{R}(0)\cap E^{s})+E^{uc}%
\subset\text{$\mathrm{int}$}\mathbf{R}(0)+E^{uc}\subset\text{$\mathrm{int}$%
}\mathbf{R}(0).
\]
For the converse inclusion, let $v\in\mathrm{int}\mathbf{R}(0)$, then $v=x+y$
where $x\in E^{s}$ and $y\in E^{uc}$, hence by Lemma \ref{Lemma_cone},
\[
x=v-y\in\text{$\mathrm{int}$}\mathbf{R}(0)+E^{uc}=\text{$\mathrm{int}$%
}\mathbf{R}(0),
\]
which shows that $x\in K_{0}$ and therefore $v\in K_{0}+E^{s}$. This shows
that%
\begin{equation}
K_{0}+E^{uc}=\text{$\mathrm{int}$}\mathbf{R}(0). \label{eq1}%
\end{equation}
In order to show that $K_{0}$ is bounded, consider the projection
$\pi:\mathbb{C}^{d}=E^{s}\oplus E^{uc}\rightarrow$ $E^{s}$ along $E^{uc}$.
Since $E^{s}$ and $E^{uc}$ are $A$-invariant, $\pi$ commutes with $A$ and we
have $\pi A^{n}=A^{n}\pi$, for all $n\in\mathbb{N}_{0}$. For each $x\in
K_{0}=\mathrm{int}\mathbf{R}(0)\cap E^{s}$, there are $k\in\mathbb{N}$ and
$u=(u_{i})\in\mathcal{U}$ such that
\[
x=\sum_{i=0}^{k-1}A^{k-1-i}Bu_{i}.
\]
Since $A|_{E^{s}}$ is a linear contraction, there exist constants $a\in(0,1)$
and $c\geq1$ such that $\Vert A^{n}x\Vert\leq ca^{n}\Vert x\Vert$ for all
$n\in\mathbb{N}$ and $x\in E^{s}$. Since $U$ is compact, there is $M>0$ such
that $\Vert\pi Bu\Vert\leq M$, for all $u\in U$, so
\[
x=\pi(x)=\pi\left(  \sum_{i=0}^{k-1}A^{k-1-i}Bu_{i}\right)  =\sum_{i=0}%
^{k-1}\pi A^{k-1-i}Bu_{i}=\sum_{i=0}^{k-1}A^{k-1-i}\pi Bu_{i},
\]
hence
\[
\Vert x\Vert\leq\sum_{i=0}^{k-1}\left\Vert A^{k-1-i}\pi Bu_{i}\right\Vert
\leq\sum_{i=0}^{k-1}\left\Vert A^{k-1-i}\Vert\Vert\pi Bu_{i}\right\Vert \leq
cM\sum_{i=0}^{k-1}a^{k-1-i}=cM\dfrac{1-a^{k}}{1-a}%
\]
showing that $K_{0}$ is bounded. As a consequence, $K:=\overline{K_{0}%
}=\overline{\mathrm{int}\mathbf{R}(0)\cap E^{s}}$ is a compact convex set
which has nonvoid interior relative to $E^{s}$. Moreover, $K+E^{uc}$ is
closed, because $K$ is compact. Therefore it follows from Proposition
\ref{prop1.1.11} and (\ref{eq1}) that
\[
\overline{\mathbf{R}(0)}=\overline{\text{$\mathrm{int}$}\mathbf{R}%
(0)}=\overline{K_{0}+E^{uc}}=K+E^{uc}.
\]

(ii) Consider the time reversed system (\ref{linsys_rev}). Note that
$\mathbb{C}^{d}=E_{-}^{s}\oplus E_{-}^{c}\oplus E_{-}^{u}$, where $E_{-}^{s}$,
$E_{-}^{c}$ and $E_{-}^{u}$ are the sums of the generalized eigenspaces for
the eigenvalues $\mu$ of $A^{-1}$ with $\left\vert \mu\right\vert <1$,
$\left\vert \mu\right\vert =1$ and $\left\vert \mu\right\vert >1$,
respectively. Now $\lambda$ is an eigenvalue of $A$ (note that $\lambda\neq0$
since $A\in Gl(d,\mathbb{C})$), if and only if $\mu=\lambda^{-1}$ is an
eigenvalue of $A^{-1}$. Hence we have $E_{-}^{s}=E^{u}$, $E_{-}^{c}=E^{c}$ and
$E_{-}^{u}=E^{s}$. By (i) there exists a compact and convex set $F\subset
\mathbb{C}^{d}$ which has nonvoid interior with respect to $E_{-}^{s}=E^{u}$
such that $\overline{\mathbf{R}^{-}(0)}=F+E_{-}^{uc}$, $0\in F$ and
$E_{-}^{uc}\subset\mathrm{int}\mathbf{R}^{-}(0)$. By Proposition
\ref{prop1.1.8},
\[
E^{sc}=E_{-}^{uc}\subset\text{$\mathrm{int}$}\mathbf{R}^{-}%
(0)=\text{$\mathrm{int}$}\mathbf{C}(0)
\]
and
\[
\overline{\mathbf{C}(0)}=F+E_{-}^{uc}=F+E^{sc}.
\]
This completes the proof of the theorem for the case $\mathbb{K}=\mathbb{C}$.

It remains to prove the theorem for the case $\mathbb{K}=\mathbb{R}$. Note
that if $A\in Gl(d,\mathbb{R})$, then $u-\imath v\in E^{s},u,v\in
\mathbb{R}^{d}$, implies $u+\imath v,v+\imath u\in E^{s}$ and a similar
implication holds for $E^{uc}$. Hence%
\begin{align}
\operatorname{Re}E^{s} &  =E^{s}\cap\mathbb{R}^{d},\operatorname{Re}%
E^{uc}=E^{uc}\cap\mathbb{R}^{d},\label{HP16}\\
E^{s} &  =\operatorname{Re}E^{s}+\imath\operatorname{Re}E^{s},E^{uc}%
=\operatorname{Re}E^{uc}\oplus\imath\operatorname{Re}E^{uc}\nonumber
\end{align}
Let $U_{\mathbb{C}}:=U+\imath U$ and apply the result above for $\mathbb{K}%
=\mathbb{C}$. Clearly $(A,B)$ is controllable, when considered as a system
with state space $\mathbb{C}^{d}$ and $U_{\mathbb{C}}$ is a convex compact
neighborhood of $0\in\mathbb{C}^{m}$ with $U_{\mathbb{C}}\subset
\overline{\mathrm{int}U_{\mathbb{C}}}$.

Denote the reachable set from $0$ of the real and complex system by
$\mathbf{R}_{\mathbb{R}}$ and $\mathbf{R}_{\mathbb{C}}$, respectively. It
follows from the complex version of the theorem that the compact convex set
$K_{\mathbb{C}}:=\overline{\mathrm{int}(\mathbf{R}_{\mathbb{C}})\cap E^{s}}$
has non-empty interior relative to $E^{s}$ and satisfies $\overline
{\mathbf{R}_{\mathbb{C}}}=K_{\mathbb{C}}\cap E^{uc}$. Since every
$u\in\mathcal{U}_{\mathbb{C}}$ is of the form $u=v+\imath w$, where
$v,w\in\mathcal{U}$, and $\varphi(k,0,u)=\varphi(k,0,v)+\imath\varphi
(k,0,w),k\in\mathbb{N}$, we have%
\begin{equation}
\mathcal{U}_{\mathbb{C}}=\mathcal{U}_{\mathbb{R}}+\imath\mathcal{U}%
_{\mathbb{R}}\text{ and }\mathbf{R}_{\mathbb{C}}=\mathbf{R}_{\mathbb{R}%
}+\imath\mathbf{R}_{\mathbb{R}}. \label{HP20}%
\end{equation}
It follows that%
\[
\mathbf{R}_{\mathbb{R}}=\operatorname{Re}\mathbf{R}_{\mathbb{C}}%
,\mathrm{int}\mathbf{R}_{\mathbb{R}}=\operatorname{Re}\mathrm{int}%
\mathbf{R}_{\mathbb{C}},
\]
where the interior of $\mathbf{R}_{\mathbb{R}}$ is relative to $\mathbb{R}%
^{d}$ and the interior of $\mathbf{R}_{\mathbb{C}}$ is relative to
$\mathbb{C}^{d}$. Now, if $W,Z$ $\subset\mathbb{C}^{d}$ are subsets of the
form%
\[
W=W_{1}+\imath W_{2},Z=Z_{1}+\imath Z_{2},
\]
where $W_{1},W_{2},Z_{1},Z_{2}\subset\mathbb{R}^{d}$ and $W\cap Z\not =%
\varnothing$, then $W\cap Z=\left(  W_{1}\cap Z_{1}\right)  +\imath\left(
W_{2}\cap Z_{2}\right)  $ and so $\operatorname{Re}(W\cap Z)=\operatorname{Re}%
W\cap\operatorname{Re}Z$. Applying this equality to $W=\mathrm{int}%
\mathbf{R}_{\mathbb{C}}$ and $Z=E^{s}$ we obtain from (\ref{HP20}) and
(\ref{HP16}) that%
\[
K=\overline{(\operatorname{Re}(\mathrm{int}\mathbf{R}_{\mathbb{C}}%
))\cap\operatorname{Re}E^{s}}=\overline{\operatorname{Re}(\mathrm{int}%
\mathbf{R}_{\mathbb{C}})\cap E^{s})}=\operatorname{Re}K_{\mathbb{C}}.
\]
Hence $K$ is a compact convex subset of $\mathbb{R}^{d}$, which has a
non-empty interior relative to $\operatorname{Re}E^{s}$. Using (\ref{HP20})
for the second equality we get%
\[
\overline{\mathbf{R}_{\mathbb{R}}}=\overline{\operatorname{Re}\mathbf{R}%
_{\mathbb{C}}}=\operatorname{Re}\overline{\mathbf{R}_{\mathbb{C}}%
}=\operatorname{Re}(K_{\mathbb{C}}+E^{u,s})=K+\operatorname{Re}E^{u,s}.
\]
This concludes the proof.
\end{proof}

Next we present a necessary and sufficient condition for controllability in
$\mathbb{K}^{d}$. This consequence of Theorem \ref{prop1.1.12} illustrates
that controllability only holds under very strong assumptions on the spectrum
of the matrix $A$. In the next section, we will instead consider subsets of
the state space where complete controllability holds, i.e., control sets.
Recall that the system without control restriction is controllable in
$\mathbb{R}^{d}$ if and only if $(A,B)$ is controllable.

\begin{corollary}
\label{corollary16}Consider the discrete-time linear system given in
(\ref{linsys}).

(i) The reachable set $\mathbf{R}(0)=\mathbb{K}^{d}$ if and only if $(A,B)$ is
controllable and $A$ has no eigenvalues with absolute value less than $1$.

(ii) The controllable set $\mathbf{C}(0)=\mathbb{K}^{d}$ if and only if
$(A,B)$ is controllable and $A$ has no eigenvalues with absolute value greater
than $1$.

(iii) The system is controllable in $\mathbb{K}^{d}$ if and only if $(A,B)$ is
controllable and all eigenvalues of $A$ have absolute value equal to $1$.
\end{corollary}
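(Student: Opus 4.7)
The plan is to obtain all three equivalences as direct consequences of Theorem \ref{prop1.1.12}, which already encodes the precise geometric structure of $\overline{\mathbf{R}(0)}$ and $\overline{\mathbf{C}(0)}$ under the assumption that $(A,B)$ is controllable.

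For (i), I would first handle the easy direction: if $(A,B)$ is controllable and $A$ has no eigenvalue of modulus strictly less than $1$, then $E^s=\{0\}$, so $E^{uc}=\mathbb{K}^d$, and Theorem \ref{prop1.1.12}(i) gives $\mathbb{K}^d = E^{uc}\subset \mathrm{int}\,\mathbf{R}(0)$. For the converse, note first that $\mathbf{R}(0)=\mathbb{K}^d$ forces $(A,B)$ to be controllable, because $\mathbf{R}(0)$ is always contained in the Kalman-controllable subspace $\operatorname{Im}[B,AB,\ldots,A^{d-1}B]$. Theorem \ref{prop1.1.12}(i) then applies and yields $\overline{\mathbf{R}(0)}=K+E^{uc}$ for some compact $K\subset E^s$; if $E^s\ne\{0\}$, projecting along $E^{uc}$ onto $E^s$ would give a bounded image, contradicting $\overline{\mathbf{R}(0)}=\mathbb{K}^d$. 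Hence $E^s=\{0\}$, as desired.

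For (ii), I would apply (i) to the time-reversed system (\ref{linsys_rev}), using Proposition \ref{prop1.1.8} to identify $\mathbf{C}(0)$ with the reachable set $\mathbf{R}^-(0)$ of that system. The pair $(A^{-1},-A^{-1}B)$ is controllable if and only if $(A,B)$ is, and the eigenvalues of $A^{-1}$ are the reciprocals $\lambda^{-1}$ of the eigenvalues $\lambda$ of $A$; therefore ``$A^{-1}$ has no eigenvalue of modulus $<1$'' is the same as ``$A$ has no eigenvalue of modulus $>1$''. This transforms (i), applied to the reversed system, into the desired statement.

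For (iii), controllability in $\mathbb{K}^d$ means $\mathbf{R}(x)=\mathbb{K}^d$ for every $x\in\mathbb{K}^d$. I would show this is equivalent to the simultaneous validity of $\mathbf{R}(0)=\mathbb{K}^d$ and $\mathbf{C}(0)=\mathbb{K}^d$: one direction is immediate, and the other follows by concatenating a trajectory from $x$ to $0$ (using $x\in \mathbf{C}(0)$) with a trajectory from $0$ to $y$ (using $y\in \mathbf{R}(0)$). Combining (i) and (ii) then forces every eigenvalue of $A$ to have modulus exactly $1$. The only step requiring any care is the remark that $\mathbf{R}(0)=\mathbb{K}^d$ already implies the algebraic controllability of $(A,B)$, which is the Cayley--Hamilton observation above; everything else is bookkeeping with the spectral decomposition supplied by Theorem \ref{prop1.1.12}.
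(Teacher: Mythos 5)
Your proposal is correct and follows essentially the same route as the paper: (i) directly from Theorem \ref{prop1.1.12}, (ii) by symmetry under time reversal (the paper's ``analogously,'' which you make explicit via Proposition \ref{prop1.1.8}), and (iii) from (i) and (ii) together with the observation that controllability in $\mathbb{K}^d$ is equivalent to $\mathbf{R}(0)=\mathbf{C}(0)=\mathbb{K}^d$. A minor remark in your favor: for the necessity direction of (i) the paper's printed text invokes Theorem \ref{prop1.1.12}(ii) and the set $F\subset E^u$ in connection with $\overline{\mathbf{R}(0)}$, which appears to be a slip, whereas your version, deriving the contradiction from boundedness of $K\subset E^s$ in the decomposition $\overline{\mathbf{R}(0)}=K+E^{uc}$, is clearly the intended argument.
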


\begin{proof}
(i) If $\mathbf{R}(0)=\mathbb{K}^{d}$, then the pair $(A,B)$ is controllable,
since $\mathbf{R}(0)$ is contained in the image of Kalman's matrix
$[B\ \ AB\ \ \ldots\ \ A^{d-1}B]$. Moreover, if there is an eigenvalue
$\lambda$ of $A$ with $|\lambda|<1$, then $E^{s}\neq\{0\}$ and $E^{u}$ is a
proper subset of $\mathbb{K}^{d}$. By Theorem \ref{prop1.1.12} (ii), there is
a nonvoid compact set $F\subset E^{u}$ such that $E^{sc}+F=\overline
{\mathbf{R}(0)}=\mathbb{K}^{d}$, a contradiction.

Conversely, if $(A,B)$ is controllable and all eigenvalues $\lambda$ of $A$
satisfy $\left\vert \lambda\right\vert \geq1$, then by Theorem
\ref{prop1.1.12} (i) we have $\mathbb{K}^{d}=E^{uc}\subset\mathrm{int}%
\mathbf{R}(0)\subset\mathbf{R}(0)$.

(ii) This follows analogously.

(iii) This is a consequence of assertions (i) and (ii) observing that
$\mathbf{R}(0)=\mathbf{C}(0)=\mathbb{K}^{d}$ holds if and only if for all
$x,y\in\mathbb{K}^{d}$ there are a control $u\in\mathcal{U}$ and a time
$k\in\mathbb{N}$ with $\varphi(k,x,u)=y$.
\end{proof}

\begin{remark}
In the continuous-time case, a result analogous to Corollary \ref{corollary16}
is given e.g. in Sontag \cite[Section 3.6]{Son98}. For the discrete-time case,
we are not aware of a result in the literature covering Corollary
\ref{corollary16}. In the special case of two inputs (i.e., $m=2$) the
characterization of null-controllability in Corollary \ref{corollary16} (ii)
is given in Wing and Desoer \cite[Section V, Theorem 2]{WinD63}.
\end{remark}

\section{Control sets for linear systems\label{section4}}

Next we analyze linear control systems in $\mathbb{R}^{d}$ of the form%
\begin{equation}
x_{k+1}=Ax_{k}+Bu_{k},u_{k}\in U\subset\mathbb{R}^{m} \label{lin}%
\end{equation}
with $A\in Gl(d,\mathbb{R})$ and $B\in\mathbb{R}^{d\times m}$ and suppose that
$U$ is a convex compact neighborhood of $0\in\mathbb{R}^{m}$ with
$U=\overline{\mathrm{int}U}$. Recall that the system without control
restrictions is controllable in $\mathbb{R}^{d}$ if and only if $\mathrm{rank}%
[B~AB\dotsc A^{d-1}B]=d$, i.e., the pair $(A,B)$ is controllable.

\begin{theorem}
\label{theorem_existence}There exists a unique control set $D$ with nonvoid
interior of system (\ref{lin}) if and only if the system without control
restriction is controllable in $\mathbb{R}^{d}$. In this case $0\in D_{0}%
\cap\mathrm{int}D$.
\end{theorem}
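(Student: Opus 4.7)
I would prove both directions of the equivalence in turn, and decompose the sufficient direction into existence (producing the control set containing $0$ in its interior and transitivity set) and uniqueness.

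For the necessary direction, suppose $(A,B)$ is not controllable and set $V:=\mathrm{Im}[B\ AB\ \cdots\ A^{d-1}B]$, an $A$-invariant proper subspace of $\mathbb{R}^{d}$. Then $\mathbf{R}_{k}(0)\subset V$ for every $k$, hence $\mathbf{R}_{k}(x)\subset A^{k}x+V$, and the projection $\pi:\mathbb{R}^{d}\to\mathbb{R}^{d}/V$ satisfies $\pi\mathbf{R}(x)\subset\{\bar{A}^{k}\pi x:k\in\mathbb{N}_{0}\}$, where $\bar{A}$ is the induced linear map on the quotient. A case analysis on the generalized eigenspaces (contracting, expanding, and rotational parts) of $\bar{A}$ shows that forward orbits of such linear maps always have closures of empty interior in the quotient. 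If a control set $D$ with nonvoid interior existed, then for any $x\in D$ the inclusion $D\subset\overline{\mathbf{R}(x)}$ would force $\pi(\mathrm{int}D)\subset\overline{\pi\mathbf{R}(x)}$, but $\pi$ is an open map so $\pi(\mathrm{int}D)$ is open and nonvoid, a contradiction.

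For existence under $(A,B)$ controllable, Lemma~\ref{lemma_long} gives $B_{\delta}(0)\subset\mathrm{int}\mathbf{R}_{d-1}(0)\subset\mathrm{int}\mathbf{R}(0)$, and since the reversed pair $(A^{-1},-A^{-1}B)$ is also controllable (its Kalman matrix equals $-A^{-d}$ times the column-reversal of $[B\ AB\ \cdots\ A^{d-1}B]$), the same lemma applied to the reversed system~(\ref{linsys_rev}) combined with Proposition~\ref{prop1.1.8} yields $B_{\delta^{\prime}}(0)\subset\mathrm{int}\mathbf{C}(0)$. Define $D:=\overline{\mathbf{R}(0)}\cap\mathbf{C}(0)$. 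Any $x\in D$ satisfies $0\in\mathbf{R}(x)$ (because $x\in\mathbf{C}(0)$), so concatenating trajectories gives $\mathbf{R}(0)\subset\mathbf{R}(x)$ and hence $D\subset\overline{\mathbf{R}(0)}\subset\overline{\mathbf{R}(x)}$; thus $D$ is approximately controllable, and it has nonvoid interior since it contains a neighborhood of $0$. By Kawan~\cite[Proposition 1.20]{Kawa13}, $D$ is contained in a control set with nonvoid interior, and applying Proposition~\ref{lem1.1.2} with $x_{0}=0$ identifies this control set with $D$ itself. Since $0\in\mathrm{int}\mathbf{R}(0)\cap\mathrm{int}\mathbf{C}(0)$, one obtains $0\in\mathrm{int}D$ and $0\in D_{0}$.

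For uniqueness, let $D^{\prime}$ be any other control set with nonvoid interior. Proposition~\ref{proposition_transitivity} produces $x_{0}\in D_{0}^{\prime}$, and Proposition~\ref{lem1.1.2} gives $D^{\prime}=\overline{\mathbf{R}(x_{0})}\cap\mathbf{C}(x_{0})$. From $x_{0}\in\mathbf{C}(x_{0})$ pick a periodic control $\varphi(k,x_{0},u)=x_{0}$, so that $s:=\varphi(k,0,u)\in\mathbf{R}_{k}(0)$ and $s=(I-A^{k})x_{0}$; iterating $u$ yields $\sum_{j=0}^{n-1}A^{jk}s\in\mathbf{R}(0)$ for every $n$, and projecting to $E^{s}$ gives the telescoping identity $\sum_{j=0}^{n-1}(A|_{E^{s}})^{jk}\pi_{s}s=x_{0}^{s}-(A|_{E^{s}})^{nk}x_{0}^{s}$, which lies in $\pi_{s}\mathbf{R}(0)\subset K$ and converges to $x_{0}^{s}$ because $A|_{E^{s}}$ is contracting. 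As $K$ is closed, $\pi_{s}x_{0}\in K$, i.e., $x_{0}\in\overline{\mathbf{R}(0)}$. The analogous argument for the time-reversed system~(\ref{linsys_rev}) --- whose stable subspace is $E^{u}$ and whose $K$-set is $F$ by Theorem~\ref{prop1.1.12} --- yields $\pi_{u}x_{0}\in F$. Since the relative interior $F_{0}:=\mathrm{int}\mathbf{C}(0)\cap E^{u}$ is dense in the convex body $F$, I can perturb $x_{0}$ within $\mathrm{int}D^{\prime}$ along $E^{u}$ to a point $z$ with $\pi_{s}z=\pi_{s}x_{0}\in K$ and $\pi_{u}z\in F_{0}$; then $z\in\overline{\mathbf{R}(0)}\cap\mathrm{int}\mathbf{C}(0)\subset D$, so $z\in D\cap D^{\prime}$ and maximality of control sets forces $D^{\prime}=D$.

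The main obstacle will be this last step: one has to reconcile any hypothetical second control set with the one containing $0$ through the invariant splitting of Theorem~\ref{prop1.1.12}, and this requires the telescoping identity on $E^{s}$, its time-reversed companion on $E^{u}$, and a final convex perturbation lifting $\pi_{u}x_{0}$ from the boundary of $F$ into $F_{0}$ so that $z$ actually lies in $\mathbf{C}(0)$ rather than merely in its closure. By comparison, the necessity argument and the existence construction are essentially direct consequences of the structural results of Section~\ref{section3} and the general control-set machinery of Section~\ref{section2}.
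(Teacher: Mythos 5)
Your existence construction (take $D=\overline{\mathbf{R}(0)}\cap\mathbf{C}(0)$, note it has nonvoid interior and is approximately controllable, invoke Kawan's maximality result and Proposition~\ref{lem1.1.2} with $x_0=0$) is essentially the paper's argument, and correctly yields $0\in D_0\cap\mathrm{int}D$.

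Your uniqueness argument, however, takes a genuinely different and much heavier route than the paper's. The paper exploits linearity and the star-shapedness of $U$ about the origin directly: for a putative second control set $\tilde D$, the sets $\alpha\tilde D$ with $\alpha\in(0,1]$ are each contained in control sets $D^\alpha$ (because $\varphi(k,x,u)=y$ implies $\varphi(k,\alpha x,\alpha u)=\alpha y$ with $\alpha u$ still admissible), and a one-line infimum argument shows that some $\alpha x$ with $x\in\mathrm{int}\tilde D$ must land in $D$, so $\tilde D=D$. Your telescoping argument instead passes through the full structure Theorem~\ref{prop1.1.12}: from a periodic trajectory through $x_0\in\tilde D_0$ you obtain $\pi_s x_0\in K$ by summing a geometric series on $E^s$, repeat on the reversed system to get $\pi_u x_0\in F$, and then perturb the $E^u$-component into the relative interior $F_0$. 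As written this can be made to work, but it carries several delicate steps you gloss over (you need $x_0\in D_0'\cap\mathrm{int}\tilde D$, which requires invoking density of $D_0'$ in $\mathrm{int}\tilde D$; the perturbed $z$ needs to land in $\overline{\mathbf{R}(0)}\cap\mathrm{int}\mathbf{C}(0)$, which uses the analogue of \eqref{eq1} for $\mathbf{C}(0)$; and you must then apply the ``overlapping control sets coincide'' fact). None of this is needed for the paper's scaling argument, which is both shorter and does not depend on the structure theorem at all.

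There is also a gap in your necessity direction. You assert without proof that ``forward orbits of such linear maps always have closures of empty interior in the quotient.'' This is the crux of the necessity argument and it is not obvious: the closure of a countable set can certainly have nonempty interior in general, so the claim requires a genuine argument about orbit closures of the induced invertible linear map $\bar A$ on $\mathbb{R}^d/V$ (e.g., via the $\bar A$-cyclic decomposition together with the measure-scaling identity $\mu(\overline O)=|\det\bar A|\,\mu(\overline O)$, or via the structure of $\overline{\langle\bar A\rangle}$ as an abelian Lie subgroup). Unless you supply this, the necessity direction is incomplete. I would also note that the paper itself is terse here, essentially asserting necessity because accessibility for linear systems forces the Kalman matrix to have full rank; the cleanest fix is to observe that for $x\in\mathrm{int}D$ the control set containing a ball forces $\mathrm{int}\mathbf{R}_k(0)\neq\varnothing$ for some $k$, which for the linear system immediately gives $\mathrm{rank}[B\ AB\ \cdots\ A^{d-1}B]=d$.
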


\begin{proof}
The controllability condition for $(A,B)$ is necessary for the existence of
$D$, since it guarantees that accessibility condition (\ref{access0}) holds
for all $x\in\mathbb{R}^{d}$ and, for the system without control constraints,
the reachable and the null-controllable subspaces coincide with $\mathbb{R}%
^{d}$. Since $0\in\mathrm{int}U$, one verifies that for $k\geq d-1$
\[
0\in\mathrm{int}(\mathbf{C}_{k}(0))\cap\mathrm{int}(\mathbf{R}_{k}%
(0))=:D^{\prime}.
\]
Then every point $x\in D^{\prime}$ can be steered to any other point $z\in
D^{\prime}$ (first steer $x$ to the origin in time $k$ and then the origin to
$z$ in time $k$) and $0\in\mathrm{int}(\mathbf{C}(0))$. As in the proof of
Proposition \ref{lem1.1.2} one finds that $D^{\prime}$ is contained in a
control set $D$. Thus we have established the existence of a control set $D$
with nonvoid interior, and $0\in D_{0}\cap\mathrm{int}D$. It remains to show uniqueness.

Let $\tilde{D}\subset\mathbb{R}^{d}$ be an arbitrary control set with nonvoid
interior. By Proposition \ref{lem1.1.2} its transitivity set $\tilde{D}_{0}$
is nonvoid and for $x_{0}\in\tilde{D}_{0}$%
\[
\tilde{D}=\overline{\mathbf{R}(x_{0})}\cap\mathbf{C}(x_{0}).
\]
By linearity, we have $\varphi(k,x_{1},u)=x_{2}$ for $k\in\mathbb{N}$ and
$x_{1},x_{2}\in\mathbb{R}^{d}$ implies $\varphi(k,\alpha x_{1},\alpha
u)=\alpha x_{2}$ for any $\alpha\in(0,1]$. Here the control $\alpha u$ has
values in $U$, since $U$ is convex and $0\in U$. This implies that
$\alpha\tilde{D}$ is contained in some control set $D^{\alpha}$ and
$\mathrm{int}(\alpha\tilde{D})$ is contained in the interior of $D^{\alpha}$.
Now choose any $x\in\mathrm{int}\tilde{D}$ and suppose, by way of
contradiction, that
\[
\alpha_{0}:=\inf\{\alpha\in(0,1]\left\vert \forall\beta\in\lbrack
\alpha,1]:\beta x\in\tilde{D}\right.  \}>0.
\]
Then $\alpha_{0}x\in\partial\tilde{D}$ and $\alpha_{0}x\in\mathrm{int}%
D^{\alpha_{0}}$. Therefore $\tilde{D}\cap\mathrm{int}D^{\alpha_{0}}%
\not =\varnothing$, and it follows that $\tilde{D}=D^{\alpha_{0}}$ and
$\alpha_{0}x\in\mathrm{int}\tilde{D}$. This is a contradiction and so
$\alpha_{0}=0$. Choosing $\alpha>0$ small enough such that $\alpha x\in D$, we
obtain $\alpha x\in\tilde{D}\cap D\not =\varnothing$. Now it follows that
$\tilde{D}=D$.
\end{proof}

The following theorem gives a spectral characterization of boundedness of the
control set. Recall that $A$ is called hyperbolic if all eigenvalues $\lambda$
of $A$ satisfy $\left\vert \lambda\right\vert \neq1$.

\begin{theorem}
\label{theorem_bounded}Assume that $(A,B)$ is controllable. Then the control
set $D$ with nonvoid interior of system (\ref{lin}) is bounded if and only if
$A$ is hyperbolic.
\end{theorem}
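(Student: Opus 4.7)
The plan is to combine the structural description of $\overline{\mathbf{R}(0)}$ and $\overline{\mathbf{C}(0)}$ from Theorem \ref{prop1.1.12} with the representation $D = \overline{\mathbf{R}(x_0)} \cap \mathbf{C}(x_0)$ from Proposition \ref{lem1.1.2}. Since Theorem \ref{theorem_existence} gives $0 \in D_0$, I will take $x_0 = 0$, so the control set reduces to the clean intersection $D = \overline{\mathbf{R}(0)} \cap \mathbf{C}(0)$. From Theorem \ref{prop1.1.12} I will use the compact convex sets $K \subset E^s$ and $F \subset E^u$ with $\overline{\mathbf{R}(0)} = K + E^{uc}$ and $\overline{\mathbf{C}(0)} = F + E^{sc}$, together with the key inclusions $E^{uc} \subset \mathrm{int}\,\mathbf{R}(0)$ and $E^{sc} \subset \mathrm{int}\,\mathbf{C}(0)$. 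Both directions then become essentially algebraic.

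For sufficiency (hyperbolic $\Rightarrow$ bounded): when $A$ is hyperbolic one has $E^c = \{0\}$, so $E^{uc} = E^u$, $E^{sc} = E^s$, and $\mathbb{R}^d = E^s \oplus E^u$ as a direct sum. Hence
$$D \subset \overline{\mathbf{R}(0)} \cap \overline{\mathbf{C}(0)} = (K + E^u) \cap (F + E^s).$$
I will show that this intersection equals $K + F$: for any $z = k + u = f + s$ with $k \in K \subset E^s$, $u \in E^u$, $f \in F \subset E^u$, $s \in E^s$, uniqueness of the $E^s \oplus E^u$ decomposition forces $k = s$ and $u = f$, so $z = k + f \in K + F$; the reverse inclusion is obvious. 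Since $K$ and $F$ are compact, $K + F$ is compact and $D$ is bounded.

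For necessity (non-hyperbolic $\Rightarrow$ unbounded): if $A$ is not hyperbolic, then $E^c \neq \{0\}$. From $E^c \subset E^{uc} \cap E^{sc}$ and the two interior inclusions above I obtain
$$E^c \subset \mathrm{int}\,\mathbf{R}(0) \cap \mathrm{int}\,\mathbf{C}(0) \subset \overline{\mathbf{R}(0)} \cap \mathbf{C}(0) = D.$$
Thus $D$ contains the nontrivial linear subspace $E^c$ and is unbounded.

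I do not anticipate any serious obstacle: all the analytical work has already been absorbed into Theorem \ref{prop1.1.12}, and the argument here only uses the direct-sum structure of $\mathbb{R}^d$ in the hyperbolic case and the elementary observation that a nontrivial subspace is unbounded. The only point meriting explicit justification is the identity $(K + E^u) \cap (F + E^s) = K + F$, which is a one-line consequence of the directness of $\mathbb{R}^d = E^s \oplus E^u$.
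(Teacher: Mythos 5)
Your proof is correct and follows essentially the same route as the paper's: you invoke Theorem \ref{prop1.1.12} for the structural decompositions of $\overline{\mathbf{R}(0)}$ and $\overline{\mathbf{C}(0)}$, Proposition \ref{lem1.1.2} with $x_0 = 0$ (justified by Theorem \ref{theorem_existence}), and then exploit the directness of the Lyapunov-space decomposition. The only cosmetic difference is that you split the argument into two implications and verify $(K+E^u)\cap(F+E^s)=K+F$ explicitly, whereas the paper establishes the two-sided inclusion $E^c \subset D \subset K + E^c + F$ in one pass and reads off the equivalence; both arguments rest on the same algebra.
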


\begin{proof}
By Theorem \ref{prop1.1.12} there are compact sets $K\subset E^{s}$, $F\subset
E^{u}$ such that
\[
\overline{\mathbf{R}(0)}=K+E^{c}+E^{u}\ \mbox{ and }\ \overline{\mathbf{C}%
(0)}=F+E^{c}+E^{s}.
\]
By Proposition \ref{lem1.1.2}, $D=\overline{\mathbf{R}(0)}\cap\mathbf{C}(0)$,
because $0\in D_{0}\subset\mathrm{int}D$, and hence every element $x\in D$ can
be represented in the following two ways:
\[
x=k+x_{1}+x_{+}=f+x_{1}+x_{-},
\]
where $k\in K\subset E^{s}$, $f\in F\subset E^{u}$, $x_{1}\in E^{c}$,
$x_{-}\in E^{s}$ and $x_{+}\in E^{u}$. Since $\mathbb{R}^{d}=E^{s}\oplus
E^{c}\oplus E^{u}$ we get $k=x_{-}$, $f=x_{+}$. As $E^{c}=E^{sc}\cap
E^{uc}\subset\mathbf{R}(0)\cap\mathbf{C}(0)\subset D$, we conclude that
$E^{c}\subset D\subset K+E^{c}+F$, and so the control set $D$ is bounded if
and only if $E^{c}=\{0\}$.
\end{proof}

\begin{remark}
We know that in the hyperbolic case%
\begin{equation}
D=K_{0}+F^{\prime}\label{D_sum}%
\end{equation}
with $K_{0}\subset E^{s},F^{\prime}\subset F\subset E^{u}$, where $K_{0}$ and
$F$ are compact sets with $0\in K_{0}\cap F$. In particular, it follows that
$K_{0},F^{\prime}\subset D$.
\end{remark}

Next we present a simple example illustrating control sets.

\begin{example}
Consider for $d=2$ and $m=1$%
\[
\left[
\begin{array}
[c]{c}%
x_{k+1}\\
y_{k+1}%
\end{array}
\right]  =\left[
\begin{array}
[c]{cc}%
2 & 0\\
0 & \frac{1}{2}%
\end{array}
\right]  \left[
\begin{array}
[c]{c}%
x_{k}\\
y_{k}%
\end{array}
\right]  +\left[
\begin{array}
[c]{c}%
1\\
1
\end{array}
\right]  u_{k},~u_{k}\in U=[-1,1].
\]
We claim that for this hyperbolic matrix $A$ the unique control set with
nonvoid interior is $D=(-1,1)\times\lbrack-2,2]$. The stable subspace
associated with the eigenvalue $\frac{1}{2}$ of $A$ is the $y$-axis, the
unstable subspace associated with the eigenvalue $2$ is the $x$-axis. For a
constant control $u\in\lbrack-1,1]$, one computes the equilibrium as
$(x(u),y(u))^{\top}=(u,2u)^{\top}$. In particular. for $u=1$ and $u=-1$ one
obtains the equilibria%
\[
\left[
\begin{array}
[c]{c}%
x(1)\\
y(1)
\end{array}
\right]  =\left[
\begin{array}
[c]{c}%
-1\\
2
\end{array}
\right]  \text{ and }\left[
\begin{array}
[c]{c}%
x(-1)\\
y(-1)
\end{array}
\right]  =\left[
\begin{array}
[c]{c}%
1\\
-2
\end{array}
\right]  ,
\]
resp. It is clear that for all $u\in(-1,1)$ the equilibrium $(-u,2u)^{\top}$
is in the interior of the control set $D$. Furthermore, observe that for
$x_{0}>1$ one has in the next step $2x_{0}+u>x_{0}$ and for $x_{0}<-1$ one has
$2x_{0}+u<x_{0}$. If $y_{0}>2$, then $\frac{1}{2}y_{0}+u<\frac{1}{2}%
y_{0}+1\leq y_{0}$ and if $y_{0}<-2$, then $\frac{1}{2}y_{0}+u\geq\frac{1}%
{2}y_{0}-1>y_{0}$. Hence solutions starting left of the vertical line $x=-1$
and right of $x=1$ have to go to the left and to the right, respectively.
Solutions which start above the horizontal line$\ y=2$ and below $y=-2$, have
to go down and up, respectively. This shows that the control set must be
contained in $(-1,1)\times\lbrack-2,2]$. The controllability property within
$D$ can be seen by the following analysis. If we start in an equilibrium
$(x(\alpha),y(\alpha))^{\top}=(-\alpha,2\alpha)^{\top},\alpha\in\left(
-1,1\right)  $, we get e.g.%
\[
\left[
\begin{array}
[c]{c}%
x_{1}\\
y_{1}%
\end{array}
\right]  =\left[
\begin{array}
[c]{c}%
-2\alpha\\
\alpha
\end{array}
\right]  +\left[
\begin{array}
[c]{c}%
1\\
1
\end{array}
\right]  u_{0},~\left[
\begin{array}
[c]{c}%
x_{2}\\
y_{2}%
\end{array}
\right]  =\left[
\begin{array}
[c]{c}%
-4\alpha\\
\frac{1}{2}\alpha
\end{array}
\right]  +\left[
\begin{array}
[c]{c}%
2\\
\frac{1}{2}%
\end{array}
\right]  u_{0}+\left[
\begin{array}
[c]{c}%
1\\
1
\end{array}
\right]  u_{1}.
\]
For the reachable set, we see that after one step the line segment
$S=\{(u,u)^{\top},\allowbreak u\in\lbrack-1,1]\}$ is shifted to $(-2\alpha
,\alpha)^{\top}$. After two time steps the line segment $S$ is shifted to
$(-4\alpha,\frac{1}{2}a)^{\top}$ and at every point the line segment
$\{(2u,\frac{1}{2}u)^{\top}\left\vert u\in\lbrack-1,1]\right.  \}$ is added.
One can show that the equilibrium $(0,0)^{\top}$ can be reached. If we start
in $(0,0)^{\top}$, we compute
\begin{align*}
\left[
\begin{array}
[c]{c}%
x_{1}\\
y_{1}%
\end{array}
\right]   &  =\left[
\begin{array}
[c]{c}%
1\\
1
\end{array}
\right]  u_{0},\left[
\begin{array}
[c]{c}%
x_{2}\\
y_{2}%
\end{array}
\right]  =\left[
\begin{array}
[c]{c}%
2\\
\frac{1}{2}%
\end{array}
\right]  u_{0}+\left[
\begin{array}
[c]{c}%
1\\
1
\end{array}
\right]  u_{1},\\
\left[
\begin{array}
[c]{c}%
x_{3}\\
y_{3}%
\end{array}
\right]   &  =\left[
\begin{array}
[c]{c}%
4\\
\frac{1}{4}%
\end{array}
\right]  u_{0}+\left[
\begin{array}
[c]{c}%
2\\
\frac{1}{2}%
\end{array}
\right]  u_{1}+\left[
\begin{array}
[c]{c}%
1\\
1
\end{array}
\right]  u_{2}.
\end{align*}
Proceeding in this way one finds that one can get approximately to all points
in $D$ and, in particular, to the equilibria $(-1,2)^{\top}$ and
$(1,-2)^{\top}$. Connecting appropriately the controls, one finally shows that
$D=(-1,1)\times\lbrack-2,2]$ is a control set.
\end{example}

\section{Invariance pressure\label{section5}}

In this section we recall the concept of invariance pressure considered in
\cite{Cocosa1}, \cite{Cocosa2}, \cite{ZHuag19} where potentials are defined on
the control range. Furthermore, we introduce the generalized version of total
invariance pressure, where the potentials are defined on the product of the
state space and the control range. Again we consider the general system
(\ref{nonlinear}).

A pair $(K,Q)$ of nonvoid subsets of $M$ is called admissible if $K\subset Q$
is compact and for each $x\in K$ there exists $u\in\mathcal{U}$ such that
$\varphi(\mathbb{N},x,u)\subset Q$. For an admissible pair $(K,Q)$ and
$\tau>0$, a $(\tau,K,Q)$-spanning set $\mathcal{S}$ of controls is a subset of
$\mathcal{U}$ such that for all $x\in K$ there is $u\in\mathcal{S}$ with
$\varphi(k,x,u)\in Q$ for all $k\in\left\{  1,\dotsc,\tau\right\}  $. Denote
by $C(U,\mathbb{R})$ the set of continuous function $f:U\rightarrow\mathbb{R}$
which we call potentials.

For a potential $f\in C(U,\mathbb{R})$ denote $(S_{\tau}f)(u):=\sum
_{i=0}^{\tau-1}f(u_{i}),u\in\mathcal{U}$, and
\[
a_{\tau}(f,K,Q)=\inf\left\{  \sum_{u\in\mathcal{S}}e^{(S_{\tau}f)(u)}%
\left\vert \mathcal{S}\text{ }(\tau,K,Q)\text{-spanning}\right.  \right\}  .
\]

\begin{definition}
The invariance pressure $P_{inv}(f,K,Q)$ of control system (\ref{nonlinear})
is defined by%
\[
P_{inv}(f,K,Q)=\overline{\underset{\tau\rightarrow\infty}{\lim}}\frac{1}{\tau
}\log a_{\tau}(f,K,Q).
\]

\end{definition}

For the potential $f=\mathbf{0}$, this reduces to the notion of invariance
entropy, $P_{inv}(\mathbf{0},K,Q)=h_{inv}(K,Q)$.

In order to define the total invariance pressure associate to every control
$u$ in a $(\tau,K,Q)$-spanning set $\mathcal{S}$ of controls an initial value
$x_{u}\in K$ with $\varphi(k,x_{u},u)\in Q$ for all $k\in\left\{
1,\dotsc,\tau\right\}  $. Then a set of state-control pairs of the form%
\[
\mathcal{S}_{tot}=\{(x_{u},u)\in K\times\mathcal{S}\left\vert \varphi
(k,x_{u},u)\in Q\text{ for all }k\in\left\{  1,\dotsc,\tau\right\}  \right.
\}
\]
is called totally $(\tau,K,Q)$-spanning. Denote by $C(Q\times U,\mathbb{R})$
the set of continuous function $f:Q\times U\rightarrow\mathbb{R}$ which we
again call potentials. For a potential $f\in C(Q\times U,\mathbb{R})$ and
$(x,u)\in M\times\mathcal{U}$ denote $(S_{\tau}f)(x,u):=\sum_{i=0}^{\tau
-1}f(\varphi(i,x,u),u_{i})$ and
\[
a_{\tau}(f,K,Q):=\inf\left\{  \sum_{(x,u)\in\mathcal{S}_{tot}}e^{(S_{\tau
}f)(x,u)}\left\vert \mathcal{S}_{tot}\text{ totally }(\tau
,K,Q)\text{-spanning}\right.  \right\}  .
\]

\begin{definition}
The total invariance pressure $P_{tot}(f,K,Q;\Sigma)$ of control system
(\ref{nonlinear}) is defined by%
\begin{equation}
P_{tot}(f,K,Q)=\underset{\tau\rightarrow\infty}{\overline{\lim}}\frac{1}{\tau
}\log a_{\tau}(f,K,Q). \label{tip}%
\end{equation}

\end{definition}

Note that by continuity and monotonicity of the logarithm,%
\begin{align}
& P_{tot}(f,K,Q)\label{tip_alt}\\
& =\underset{\tau\rightarrow\infty}{\overline{\lim}}\inf\left\{  \frac{1}%
{\tau}\log\sum_{(x,u)\in\mathcal{S}_{tot}}e^{(S_{\tau}f)(x,u)}\left\vert
\mathcal{S}_{tot}\text{ totally }(\tau,K,Q)\text{-spanning}\right.  \right\}
.\nonumber
\end{align}
Furthermore $-\infty<a_{\tau}(f,K,Q)\leq\infty$ for every $\tau\in\mathbb{N}$,
every admissible pair $(K,Q)$, and every potential $f$ if every countable
totally spanning set contains a finite totally spanning subset, cf.
\cite[Remark 7]{Cocosa2}. If $f(x,u)$ is independent of $x$, i.e., it is a
continuous function on $U$, the total invariance pressure coincides with the
invariance pressure.

\begin{remark}
The definition of totally $(\tau,K,Q)$-spanning sets is inspired by the
definition of spanning sets for $(K,Q)$ in Wang, Huang, and Sun \cite[p.
313]{WangHS19}, where a similar notion is introduced in the context of
invariant partitions which provide an alternative definition of invariance entropy..
\end{remark}

The next elementary proposition presents some properties of the function
$P_{tot}(\cdot,K,Q):C(Q\times U,\mathbb{R})\rightarrow\mathbb{R}\cup
\{\pm\infty\}$.

\begin{proposition}
\label{propert}The following assertions hold for an admissible pair $(K,Q)$,
functions $f,g\in C(Q\times U,\mathbb{R})$ and $c\in\mathbb{R}$:

(i) For $f\leq g$ one has $P_{tot}(f,K,Q)\leq P_{tot}(g,K,Q)$.

(ii) $P_{tot}(f+c,K,Q)=P_{tot}(f,K,Q)+c$.
\end{proposition}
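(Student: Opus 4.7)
The plan is to reduce both statements to properties of the finite-time quantity $a_{\tau}(f,K,Q)$ and then pass to the limit. In both cases the key observation is that the Birkhoff sum $(S_{\tau}f)(x,u)=\sum_{i=0}^{\tau-1}f(\varphi(i,x,u),u_{i})$ interacts well with pointwise operations on the potential $f$; once the identity or inequality is established at the level of $a_{\tau}$, dividing by $\tau$, taking $\log$, and applying $\limsup_{\tau\to\infty}$ yields the conclusion.

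For (i), I would first note that $f\leq g$ on $Q\times U$ implies $(S_{\tau}f)(x,u)\leq(S_{\tau}g)(x,u)$ for every pair $(x,u)\in K\times\mathcal{U}$ with $\varphi(i,x,u)\in Q$ for all $i\in\{0,\dotsc,\tau-1\}$; this is exactly the constraint satisfied by every element of any totally $(\tau,K,Q)$-spanning set $\mathcal{S}_{tot}$. By monotonicity of the exponential, $e^{(S_{\tau}f)(x,u)}\leq e^{(S_{\tau}g)(x,u)}$, and summing over a fixed $\mathcal{S}_{tot}$ and then taking the infimum over all such sets gives $a_{\tau}(f,K,Q)\leq a_{\tau}(g,K,Q)$. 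Since $\frac{1}{\tau}\log(\cdot)$ is monotone and $\limsup$ preserves inequalities, (i) follows directly from definition (\ref{tip}).

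For (ii), the step is a direct calculation: $(S_{\tau}(f+c))(x,u)=(S_{\tau}f)(x,u)+\tau c$, hence $e^{(S_{\tau}(f+c))(x,u)}=e^{\tau c}e^{(S_{\tau}f)(x,u)}$. Pulling the constant factor $e^{\tau c}$ out of the sum over any totally $(\tau,K,Q)$-spanning set $\mathcal{S}_{tot}$ and taking the infimum yields $a_{\tau}(f+c,K,Q)=e^{\tau c}a_{\tau}(f,K,Q)$. Therefore $\frac{1}{\tau}\log a_{\tau}(f+c,K,Q)=c+\frac{1}{\tau}\log a_{\tau}(f,K,Q)$, and taking $\limsup_{\tau\to\infty}$ gives $P_{tot}(f+c,K,Q)=P_{tot}(f,K,Q)+c$.

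There is no serious obstacle here; both parts are routine and mirror the well-known behavior of topological pressure under pointwise ordering and additive constants. The only minor point to address is the possibility that $a_{\tau}(f,K,Q)=\infty$ (when no finite totally spanning set exists at time $\tau$), in which case both sides of the identity in (ii) are $+\infty$ and both sides of the inequality in (i) may be $+\infty$; with the usual conventions $\log(+\infty)=+\infty$ and $c+(+\infty)=+\infty$, the conclusions remain valid in $\mathbb{R}\cup\{\pm\infty\}$.
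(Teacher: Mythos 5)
Your proof is correct and is precisely the elementary argument the paper alludes to when it says the claim "follows easily from the definition" (with a pointer to \cite[Proposition 13]{Cocosa1}): pass the pointwise inequality or additive constant through the Birkhoff sum $S_\tau$, through the exponential and the sum over a totally spanning set, through the infimum defining $a_\tau$, and finally through $\frac{1}{\tau}\log(\cdot)$ and $\limsup_{\tau\to\infty}$. Your remark about the case $a_\tau(f,K,Q)=+\infty$ is a sensible precaution and consistent with the conventions the paper uses for $P_{tot}$ taking values in $\mathbb{R}\cup\{\pm\infty\}$.
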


\begin{proof}
This follows easily from the definition, cf. also \cite[Proposition
13]{Cocosa1}.
\end{proof}

The following proposition shows that, in the definition of total invariance
pressure, we can take the limit superior over times which are integer
multiples of some fixed time step $\tau\in\mathbb{N}$. The proof is analogous
to the proof given in \cite[Theorem 20]{Cocosa2} for invariance pressure of
continuous-time systems.

\begin{proposition}
\label{discretization}For all $f\in C(Q\times U,\mathbb{R})$ with
$\inf_{(x,u)\in Q\times U}f(x,u)>-\infty$ the total invariance pressure
satisfies for $\tau\in\mathbb{N}$%
\[
P_{tot}(f,K,Q)=\underset{n\rightarrow\infty}{\overline{\lim}}\frac{1}{n\tau
}\log a_{n\tau}(f,K,Q).
\]

\end{proposition}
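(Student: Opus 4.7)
The inequality $\limsup_{n\to\infty}\frac{1}{n\tau}\log a_{n\tau}(f,K,Q)\leq P_{tot}(f,K,Q)$ is immediate, since the right-hand side is the limit superior along the full sequence $T\in\mathbb{N}$ and hence dominates the limit superior along any subsequence. The substantive work lies in the reverse inequality, for which the plan is to compare $a_T(f,K,Q)$ with $a_{(n+1)\tau}(f,K,Q)$ whenever $T\in[n\tau,(n+1)\tau)$ and then pass to the limit.

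For the comparison, set $b:=\inf_{(y,v)\in Q\times U}f(y,v)$, which is finite by hypothesis. Fix $T\geq\tau$ and write $T=n\tau+r$ with $0\leq r<\tau$, so $n=\lfloor T/\tau\rfloor$. By monotonicity of the invariance condition in time, every totally $((n+1)\tau,K,Q)$-spanning set $\mathcal{S}_{tot}$ is also totally $(T,K,Q)$-spanning. For any $(x,u)\in\mathcal{S}_{tot}$ the orbit satisfies $\varphi(i,x,u)\in Q$ for $0\leq i\leq(n+1)\tau$, so $f(\varphi(i,x,u),u_i)\geq b$ throughout this range, and hence
\[
(S_T f)(x,u)=(S_{(n+1)\tau}f)(x,u)-\sum_{i=T}^{(n+1)\tau-1}f(\varphi(i,x,u),u_i)\leq (S_{(n+1)\tau}f)(x,u)+\tau|b|.
\]
Exponentiating, summing over $\mathcal{S}_{tot}$, and taking the infimum over all such spanning sets yields
\[
a_T(f,K,Q)\leq e^{\tau|b|}\,a_{(n+1)\tau}(f,K,Q).
\]

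Dividing by $T$, taking logarithms, and splitting the right-hand side as
\[
\frac{1}{T}\log a_T(f,K,Q)\leq\frac{\tau|b|}{T}+\frac{(n+1)\tau}{T}\cdot\frac{1}{(n+1)\tau}\log a_{(n+1)\tau}(f,K,Q),
\]
I then let $T\to\infty$; here $n\to\infty$, $\tau|b|/T\to 0$, and the factor $(n+1)\tau/T$ lies in $[1,1+1/n]$ and tends to $1$. Passing to $\limsup$ on both sides gives $P_{tot}(f,K,Q)\leq\limsup_{n\to\infty}\frac{1}{n\tau}\log a_{n\tau}(f,K,Q)$, which combined with the easy direction yields equality.

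The one delicate point is that $\limsup$ does not commute with multiplication by the factor $(n+1)\tau/T$, which is not identically $1$ along the optimizing sequence. This is handled by treating separately the three cases according to whether the right-hand $\limsup$ is finite, $+\infty$, or $-\infty$, using the uniform bound $(n+1)\tau/T\leq 1+1/n$ in each case. The hypothesis $b>-\infty$ enters precisely in the truncation bound: without it, the error factor $e^{\tau|b|}$ would not be available and the comparison of $a_T$ to $a_{(n+1)\tau}$ would collapse.
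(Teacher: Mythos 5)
Your proof is correct, and it is essentially the same as the paper's: both hinge on the observation that a totally $((n+1)\tau,K,Q)$-spanning set is automatically totally $(T,K,Q)$-spanning for $T\le (n+1)\tau$, which gives a comparison $a_T\lesssim a_{(n+1)\tau}$ that is then passed to the limit. The difference is cosmetic but worth noting. The paper first replaces $f$ by $g:=f-\inf f\ge 0$; since $g\ge 0$ the sums $(S_T g)(x,u)$ are nondecreasing in $T$, so one gets the clean monotonicity $a_{\tau_k}(g,K,Q)\le a_{(n_k+1)\tau}(g,K,Q)$ with no error term, and since $\log a_\tau(g,K,Q)\ge 0$ one can freely replace $1/\tau_k$ by the larger $1/(n_k\tau)$ without any case analysis on signs; the translation law $P_{tot}(f+c,K,Q)=P_{tot}(f,K,Q)+c$ (Proposition \ref{propert}(ii)) then returns the statement for $f$. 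You instead keep $f$ itself, pay an explicit additive price $\tau|b|$ for the truncated tail, and defer the sign/case analysis for $\limsup(\alpha_T c_T)$ to the end; this works, but you should spell out that case analysis rather than only gesturing at it, since it is precisely the step the paper's normalization is designed to make trivial. The hypothesis $\inf f>-\infty$ is used in the same essential way in both arguments.
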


\begin{proof}
For every $f\in C(Q\times U,\mathbb{R})$, the inequality%
\begin{equation}
P_{tot}(f,K,Q)\geq\underset{n\rightarrow\infty}{\overline{\lim}}\frac{1}%
{n\tau}\log a_{n\tau}(f,K,Q) \label{4.1c}%
\end{equation}
is obvious. For the converse note that the function $g(x,u):=f(x,u)-\inf f$ is
nonnegative (if $f\geq0$, we may consider $f$ instead of $g$). Let $\tau
_{k}\in(0,\infty)$ with $\tau_{k}\rightarrow\infty$ for $k\rightarrow\infty$.
Then for every $k\geq1$ there exists $n_{k}\in\mathbb{N}_{0}$ such that
$n_{k}\tau\leq\tau_{k}<(n_{k}+1)\tau$ and $n_{k}\rightarrow\infty$ for
$k\rightarrow\infty$. Since $g\geq0$ it follows that%
\[
a_{\tau_{k}}(g,K,Q)\leq a_{(n_{k}+1)\tau}(g,K,Q)
\]
and consequently
\[
\frac{1}{\tau_{k}}\log a_{\tau_{k}}(g,K,Q)\leq\frac{1}{n_{k}\tau}\log
a_{(n_{k}+1)\tau}(g,K,Q).
\]
This yields
\[
\underset{k\rightarrow\infty}{\overline{\lim}}\frac{1}{\tau_{k}}\log
a_{\tau_{k}}(g,K,Q)\leq\underset{k\rightarrow\infty}{\overline{\lim}}\frac
{1}{n_{k}\tau}\log a_{(n_{k}+1)\tau}(g,K,Q).
\]
Since $\frac{1}{n_{k}\tau}=\frac{n_{k}+1}{n_{k}}\frac{1}{(n_{k}+1)\tau}$ and
$\frac{n_{k}+1}{n_{k}}\rightarrow1$ for $k\rightarrow\infty$, we obtain
\begin{align*}
\underset{k\rightarrow\infty}{\overline{\lim}}\frac{1}{\tau_{k}}\log
a_{\tau_{k}}(g,K,Q)  &  \leq\underset{k\rightarrow\infty}{\overline{\lim}%
}\frac{1}{(n_{k}+1)\tau}\log a_{(n_{k}+1)\tau}(g,K,Q)\\
&  \leq\underset{n\rightarrow\infty}{\overline{\lim}}\frac{1}{n\tau}\log
a_{n\tau}(g,K,Q).
\end{align*}
Together with Proposition \ref{propert} (ii) and (\ref{4.1c}) applied to
$f-\inf f$, this shows that%
\begin{align*}
P_{tot}(f,K,Q)  &  =P_{tot}(f-\inf f,K,Q)+\inf f\\
&  =\underset{n\rightarrow\infty}{\overline{\lim}}\frac{1}{n\tau}\log
a_{n\tau}(f-\inf f,K,Q)+\inf f\\
&  =\underset{n\rightarrow\infty}{\overline{\lim}}\frac{1}{n\tau}\log
a_{n\tau}(f,K,Q).
\end{align*}

\end{proof}

The following result is given in \cite[Corollary 15]{Cocosa2} for
continuous-time systems. The discrete-time case is proved analogously.

\begin{proposition}
\label{compact}Let $K_{1},K_{2}$ be two compact sets with nonvoid interior
contained in a control set $D\subset M$ and assume that every point in $D$ is
accessible. Then $(K_{1},D)$ and $(K_{2},D)$ are admissible pairs and for all
$f\in C(U,\mathbb{R})$ we have
\[
P_{inv}(f,K_{1},D)=P_{inv}(f,K_{2},D).
\]

\end{proposition}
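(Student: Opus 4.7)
The plan is to establish admissibility of both pairs, then to show $P_{inv}(f,K_1,D)\le P_{inv}(f,K_2,D)$; the reverse inequality will follow by the symmetric roles of $K_1$ and $K_2$. Admissibility is immediate from property (ii) in Definition \ref{Definition3.1}: every $x\in K_i\subset D$ admits $u\in\mathcal{U}$ with $\varphi(\mathbb{N},x,u)\subset D$, and $K_i$ is compact by hypothesis.

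The central step will be a uniform steering lemma: $K_1$ can be driven into $\mathrm{int}(K_2)$ by finitely many controls in uniformly bounded time. For a fixed $x\in K_1$, since $K_2\subset D$ has nonvoid interior one has $\mathrm{int}(K_2)\subset\mathrm{int}(D)$, and by Proposition \ref{proposition_transitivity} the transitivity set $D_0$ is dense in $\mathrm{int}(D)$; so I pick $y_0\in\mathrm{int}(K_2)\cap D_0$. Proposition \ref{lemma1} then produces $k_x\in\mathbb{N}$ and $u^x\in\mathcal{U}$ with $\varphi(k_x,x,u^x)=y_0$, and a trajectory-selection argument based on $y_0\in D_0$ combined with the forward invariance in Definition \ref{Definition3.1}(ii) will arrange $\varphi(j,x,u^x)\in D$ for $j\in\{1,\ldots,k_x\}$, with intermediate points pushed into $\mathrm{int}(D)$ wherever possible. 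Continuity of $\varphi(j,\cdot,u^x)$ and openness of $\mathrm{int}(K_2)$ then yield an open neighborhood $N_x$ of $x$ with $\varphi(k_x,N_x,u^x)\subset\mathrm{int}(K_2)$ and $\varphi(j,N_x,u^x)\subset D$ for $j\le k_x$. Compactness of $K_1$ delivers a finite subcover $N_{x_1},\ldots,N_{x_r}$ with steering controls $u^1,\ldots,u^r$ and times bounded by $\bar k:=\max_i k_{x_i}$.

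Given any $(\tau,K_2,D)$-spanning set $\mathcal{S}$, I then form the concatenated family $\tilde{\mathcal{S}}:=\{u^i\star v : 1\le i\le r,\ v\in\mathcal{S}\}$ and extend each element to the common length $\tau+\bar k$ using a precomputed finite family of admissible extension controls covering $K_2$, whose existence follows from compactness of $K_2$ together with admissibility of $(K_2,D)$ and continuity. By construction $\tilde{\mathcal{S}}$ is a $(\tau+\bar k,K_1,D)$-spanning set. With $M:=\max_{u\in U}|f(u)|<\infty$, this will give
\[
a_{\tau+\bar k}(f,K_1,D)\le C(r,\bar k,M)\cdot a_\tau(f,K_2,D),
\]
with $C$ independent of $\tau$; taking $(\tau+\bar k)^{-1}\log$ and then $\limsup_{\tau\to\infty}$ absorbs the fixed constants and yields $P_{inv}(f,K_1,D)\le P_{inv}(f,K_2,D)$.

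The main obstacle will be the trajectory-in-$D$ requirement at two junctures: first, ensuring the initial steering control $u^x$ can be chosen so its intermediate iterates lie in $\mathrm{int}(D)$, so that $u^x$ works uniformly on a whole neighborhood $N_x$; second, padding each concatenation to the common length $\tau+\bar k$ without escaping $D$. Both are compactness-plus-continuity arguments built on the control set structure developed in Section \ref{section2}, analogous to the continuous-time proof of \cite[Corollary 15]{Cocosa2}.
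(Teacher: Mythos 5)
The paper does not actually supply a proof of this proposition---it only cites the continuous-time analogue in \cite[Corollary 15]{Cocosa2}---so your overall strategy (uniform steering from $K_1$ into $\mathrm{int}\,K_2$, then concatenation and symmetry) is the right one. But you have a genuine gap precisely at the place you flag: ``a trajectory-selection argument\ldots{} will arrange $\varphi(j,x,u^x)\in D$'' is not an argument, and it is exactly the nontrivial point. Here is why it is resolvable with the machinery of Section \ref{section2}, which you should actually invoke. Pick $y_0\in D_0\cap\mathrm{int}\,K_2$ (possible since $D_0$ is dense in $\mathrm{int}\,D\supset\mathrm{int}\,K_2$). By Proposition \ref{lem1.1.2}, $D=\overline{\mathbf{R}(y_0)}\cap\mathbf{C}(y_0)$. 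Now fix $x\in K_1$ and, by Proposition \ref{lemma1}, a control $u^x$ with $\varphi(k_x,x,u^x)=y_0$. For any $z\in K_1$ close enough to $x$ that $\varphi(k_x,z,u^x)\in\mathrm{int}\,K_2\subset D$, and for every $j\in\{1,\dotsc,k_x\}$, one has $\varphi(j,z,u^x)\in\overline{\mathbf{R}(y_0)}$ (because $z\in D\subset\overline{\mathbf{R}(y_0)}$ and $\overline{\mathbf{R}(y_0)}$ is forward invariant under each $\varphi_{j,u}$ by continuity) and $\varphi(j,z,u^x)\in\mathbf{C}(y_0)$ (because $\varphi(k_x,z,u^x)\in D$ and $y_0\in D_0\subset\mathbf{R}(w)$ for every $w\in D$ by Proposition \ref{lemma1}). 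Hence $\varphi(j,z,u^x)\in D$ for $j=1,\dotsc,k_x$ and all $z\in N_x\cap K_1$ with $N_x$ a suitable neighborhood of $x$. That is the content you need; nothing in Definition \ref{Definition3.1}(ii) alone gives it.

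Your second worry, padding the concatenations to a common length $\tau+\bar k$, is both problematic as described and unnecessary. As described it fails because after the $v$-segment the trajectory is merely somewhere in $D$, not in $K_2$, so a ``finite family of extension controls covering $K_2$'' is not applicable; and inserting the extension between $u^i$ and $v$ destroys the property that $v$ is applied from a point of $K_2$. The clean fix is to not pad at all: the set $\tilde{\mathcal S}=\{u^i\star v: 1\le i\le r,\ v\in\mathcal S\}$ is already a $(\tau,K_1,D)$-spanning set for the \emph{same} $\tau$, since for each $x\in K_1$ one steers in $k_i\le\bar k$ steps inside $D$ into $\mathrm{int}\,K_2$ and then $v$ keeps the trajectory in $D$ for a further $\tau$ steps, so all of times $1,\dotsc,\tau\le k_i+\tau$ land in $D$. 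Comparing Birkhoff sums gives $(S_\tau f)(u^i\star v)\le (S_\tau f)(v)+2\bar k\max_U|f|$, hence $a_\tau(f,K_1,D)\le r\,e^{2\bar k\max_U|f|}\,a_\tau(f,K_2,D)$, and the multiplicative constant disappears after $\frac1\tau\log$ and $\limsup_{\tau\to\infty}$. With these two repairs your proof is complete and matches the intended argument.
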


\section{Invariance pressure for linear systems\label{section6}}

The main result of this section presents a formula for the invariance pressure
of the unique control set with nonvoid interior for hyperbolic linear control
systems of the form (\ref{lin}).

We start with a proposition providing an upper bound for the total invariance
pressure of the unique control set with nonvoid interior, cf. Theorems
\ref{theorem_existence} and \ref{theorem_bounded}. The proof uses arguments
from \cite{Cocosa3} which in turn are based on a construction by Kawan
\cite[Theorem 4.3]{Kawa11b}, \cite[Theorem 5.1]{Kawa13} (for the discrete-time
case cf. also \cite[Remark 5.4]{Kawa13} and Nair, Evans, Mareels, Moran
\cite[Theorem 3]{NEMM04}).

Let $A^{+}$ be the restriction of $A$ to the unstable subspace $E^{u}$. The
unstable determinant of $A$ is%
\[
\det A^{+}=\prod\limits_{\lambda\in\sigma(A)}\lambda^{n_{\lambda}}\text{ and
}\log\left\vert \det A^{+}\right\vert =\sum_{\lambda\in\sigma(A)}n_{\lambda
}\max\{0,\log\left\vert \lambda\right\vert \},
\]
where $n_{\lambda}$ denotes the algebraic multiplicity of an eigenvalue
$\lambda$ of $A$.

\begin{proposition}
\label{prop_upper_tot}Consider a linear control system of the form (\ref{lin})
and assume that the pair $(A,B)$ is controllable with a hyperbolic matrix $A$.
Let $D$ be the unique control set with nonvoid interior and let $f\in
C(\overline{D}\times U,\mathbb{R})$. Then there exists a compact set $K\subset
D$ with nonvoid interior such that the total invariance pressure satisfies%
\[
P_{tot}(f,K,D)\leq\log\left\vert \det A^{+}\right\vert +\inf_{(\tau,x,u)}%
\frac{1}{\tau}\sum_{i=0}^{\tau-1}f(\varphi(i,x,u),u_{i}),
\]
where the infimum is taken over all $\tau\in\mathbb{N}$ with $\tau\geq d$ and
all $\tau$-periodic controls $u$ with a $\tau$-periodic trajectory
$\varphi(\cdot,x,u)$ in $\mathrm{int}D$ such that $u_{i}\in\mathrm{int}U$ for
$i\in\{0,\dotsc,\tau-1\}$.
\end{proposition}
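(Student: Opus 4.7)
The plan is to construct, for every admissible triple $(\tau, x, u)$ with $\tau$-periodic orbit $x^*_k := \varphi(k, x, u) \in \mathrm{int}\,D$ and $u^*_k := u_k \in \mathrm{int}\,U$, a family of totally spanning sets of cardinality roughly $|\det A^+|^{nj\tau}$ whose trajectories stay arbitrarily close to the periodic orbit. This yields $P_{tot}(f,K,D) \leq \log|\det A^+| + \frac{1}{\tau}\sum_{k=0}^{\tau-1} f(x^*_k, u^*_k)$ for each such triple, and the infimum then gives the stated bound. A single compact $K \subset \mathrm{int}\,D$ (for instance a small neighborhood of $0 \in D_0$, cf.\ Theorem~\ref{theorem_existence}) will serve for every triple: by approximate controllability of $D$ and compactness of $K$ there is a finite $\mathcal{S}_0 \subset \mathcal{U}$ and $T \in \mathbb{N}$ such that every $y \in K$ is steered by some $w \in \mathcal{S}_0$ into a prescribed small neighborhood of $x^*_0$ in time $T$, and this initial cost contributes negligibly in the pressure limit.

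After the initial steering, I pass to perturbation coordinates $y_k := x_k - x^*_k$, $v_k := u_k - u^*_k$, giving the linear system $y_{k+1} = Ay_k + Bv_k$ with compact convex control range $V := \bigcap_{i=0}^{\tau-1}(U - u^*_i)$, which is a neighborhood of $0$ since $u^*_i \in \mathrm{int}\,U$. The pair $(A,B)$ remains controllable, $A$ remains hyperbolic, and by Theorem~\ref{prop1.1.12} applied to this perturbation system the reachable set from $0$ contains $E^u$ in its interior; in particular, for large $j$ the time-$j\tau$ reachable set contains arbitrarily large prescribed translations in $E^u$. Choose a parallelepiped $B = B^s + B^u$ aligned with a real Jordan basis of $A$, small enough that $x^*_k + B \subset \mathrm{int}\,D$ for all $k$. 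Since $A^{j\tau}$ stretches $E^u$-volumes by $|\det A^+|^{j\tau}$, the image $A^{j\tau}(B^u)$ is covered by $N_{j\tau} \leq C\,|\det A^+|^{j\tau}$ translates $B^u + c_i$ with $C$ independent of $j$; each required correction $c_i$ is realized by a control $v^{(i)} \in V^{j\tau}$ of the form $\sum_k A^{j\tau-1-k}Bv_k^{(i)}$. By Proposition~\ref{proposition_in} (and shrinking $B$ if needed) each trajectory $y_k$ then remains in $B$ for all $k \in \{1,\ldots,j\tau\}$, hence $x_k \in \mathrm{int}\,D$ throughout.

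Iterating this construction $n$ times produces a tree of $N_{j\tau}^n$ perturbation controls in $V^{nj\tau}$; translating back by $u^*$ and prepending $\mathcal{S}_0$ yields a $(T + nj\tau, K, D)$-totally spanning set of cardinality $\leq |\mathcal{S}_0|\,N_{j\tau}^n$. By uniform continuity of $f$ on the compact set $\overline{D}\times U$, each associated potential sum satisfies
\[
S_{T+nj\tau}\,f \;\leq\; T\,\|f\|_\infty \;+\; nj\sum_{k=0}^{\tau-1} f(x^*_k, u^*_k) \;+\; nj\tau\,\omega_f(\mathrm{diam}\,B),
\]
where $\omega_f$ is the modulus of continuity of $f$. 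Dividing by $T + nj\tau$, sending $n \to \infty$ (using Proposition~\ref{discretization} with step $j\tau$, padding $T$ to a multiple of $j\tau$ if needed), then shrinking $B$ (which leaves $N_{j\tau}$ unchanged because the covering count is scale-invariant in $B^u$), and finally taking $j \to \infty$ so that $\tfrac{1}{j\tau}\log N_{j\tau} \to \log|\det A^+|$, one obtains $P_{tot}(f,K,D) \leq \log|\det A^+| + \frac{1}{\tau}\sum_{k=0}^{\tau-1} f(x^*_k, u^*_k)$ for the chosen triple. Taking the infimum over all admissible triples completes the proof.

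The main technical obstacle is the volume-based covering estimate: one must select parallelepipeds aligned with the real Jordan basis of $A|_{E^u}$ so that $A^{j\tau}(B^u)$ is tiled efficiently by translates of $B^u$ with $N_{j\tau} \leq C|\det A^+|^{j\tau}$ and $C$ independent of $j$, and verify that each required correction $c_i$ is realizable by some $v^{(i)} \in V^{j\tau}$ (which in turn requires the perturbation reachable set at time $j\tau$ to contain a ball in $E^u$ whose radius dominates $\|A^+\|^{j\tau}\mathrm{diam}(B^u)$). A secondary subtlety is keeping the iterated spanning trajectories inside $\mathrm{int}\,D$ at all intermediate times, not merely at multiples of $j\tau$; this is handled by Proposition~\ref{proposition_in} together with a smallness condition on $B$ relative to the distance from $x^*$ to $\partial D$.
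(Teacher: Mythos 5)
Your overall architecture — work in perturbation coordinates around a periodic orbit, choose a box adapted to the real Jordan structure, count $\approx |\det A^+|^{j\tau}$ pieces per $j\tau$ steps, iterate, and let the initial steering wash out — is the same family of argument as the paper's. But you run it in the ``dual'' direction (cover the \emph{image} $A^{j\tau}(B^u)$ by translates of a fixed box and correct at the end), whereas the paper subdivides the \emph{initial} cube finely in Lyapunov directions and steers each subcuboid center to $0$ in a fixed number $\tau^0\geq d$ of steps; the remaining error $R^\tau(x-y)$ is then automatically small because the subdivision is fine. That inversion creates two genuine gaps in your write-up.

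First, the realizability of the corrections. You require that the perturbation reachable set at time $j\tau$ contain a \emph{ball} in $E^u$ of radius $\gtrsim\|A^+\|^{j\tau}\mathrm{diam}\,B^u$. This is false in general: with nontrivial Jordan blocks the reachable set at time $j\tau$ from $0$ is comparable to $A^{j\tau-d}B_\delta(0)$, an ellipsoid whose shortest semi-axis grows like the \emph{smallest} singular value of $A^{j\tau-d}$, not like $\|A^+\|^{j\tau}$. What actually saves the argument is a shape statement, not a radius statement: since each correction $c_i$ lies in $A^{j\tau}(B^u)-B^u\subset A^{j\tau}(B^u-B^u)$, one has $A^{-(j\tau-d)}c_i\in A^d(B^u-B^u)$, a \emph{fixed} bounded set, and hence $-c_i\in A^{j\tau-d}B_\delta(0)\subset\mathbf R_{j\tau}(0)$ once $B^u$ is small enough. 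You flag realizability as the ``main technical obstacle'' but the sufficient condition you state is the wrong one and would not close; the correct argument has to be aligned to the ellipsoid, along the lines above. (The paper avoids all of this by only ever steering points of norm $\leq b_0$ to $0$ in $\tau^0$ steps, so the controls are trivially bounded by $C_0 b_0$.)

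Second, and more seriously, the intermediate-time containment is not delivered by Proposition~\ref{proposition_in}. That proposition says: a trajectory starting in $\mathrm{int}\,D$ that \emph{is known} to stay in $D$ up to time $\tau$ actually stays in $\mathrm{int}\,D$. It gives no help in showing the trajectory stays in $D$ (or in the small box $B$) in the first place — that is exactly the thing you are trying to prove. You need an explicit linear estimate: for the burst-type correction control ($v_i\ne 0$ only for $i<d$), one computes $y_k=A^{k-j\tau}b$ with $b\in B^u$ for $d\le k\le j\tau$, and for $0\le k<d$ one bounds $\|y_k\|$ via $\|A\|^d\mathrm{diam}\,B$ and the steering constants $C_0,C_1$ of (\ref{Kawan5.9MODIFIED0})–(\ref{Kawan5.9MODIFIED}). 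Only then does shrinking $B$ give $x^*_k+y_k\in\mathrm{int}\,D$ for all $k$. This is precisely what the paper does in its Step 5, and it cannot be replaced by a citation to Proposition~\ref{proposition_in}. A minor additional point: the claim that the covering constant $C$ is independent of $j$ is plausible (the image parallelepipeds are ``thick'' in every direction since all unstable eigenvalues exceed $1$), but you should not assert it without proof; the paper instead uses the $\xi$-fudge $M_j(m)=\lfloor(\rho_j+\xi)^m\rfloor+1$ to absorb Jordan-block polynomial factors, which is cleaner.

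One thing you do better than the paper's write-up: you choose a single compact $K$ near $0\in D_0$ that works for \emph{all} admissible periodic triples, absorbing the cost of steering $K$ into a neighborhood of $x^*_0$ in the limit. The paper's construction takes $K=x^0+\mathcal C$ for a fixed triple, so strictly speaking it only shows, for each triple, the existence of some $K$ satisfying the corresponding bound; your device is the right way to get a single $K$ for the infimum stated in the proposition.
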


\begin{proof}
We will construct a compact subset $K\subset D$ with nonvoid interior such
that the inequality above holds. Observe that then by Proposition
\ref{compact} the pair $(K,D)$ is admissible.

We may suppose that $A$ has real Jordan form $R=T^{-1}AT$. In fact, writing
$x=Tx^{\prime}$ one obtains%
\begin{equation}
x_{k+1}^{\prime}=T^{-1}ATx_{k}^{\prime}+T^{-1}Bu_{k}=Rx_{k}^{\prime}%
+B^{\prime}u_{k}\label{transform}%
\end{equation}
with $B^{\prime}:=T^{-1}B$. Then with $f^{\prime}(x^{\prime},u)=f(Tx^{\prime
},u)=:f(x,u),K^{\prime}:=T^{-1}K$, and $D^{\prime}:=T^{-1}D$ the total
invariance pressure $P_{toz}(f,K,D)$ coincides with the total invariance
pressure $P_{tot}(f^{\prime},K^{\prime},D^{\prime})$ of (\ref{transform}).
Consider a $\tau^{0}$-periodic control $u^{0}(\cdot)$ with $\tau^{0}$-periodic
trajectory $\varphi(\cdot,x^{0},u^{0})$ as in the statement of the theorem,
hence%
\begin{equation}
x^{0}=R^{\tau^{0}}x^{0}+\sum_{i=0}^{\tau^{0}-1}R^{\tau^{0}-i}B^{\prime}%
u_{i}.\label{periodic}%
\end{equation}

\textbf{Step 1:} Choose a basis $\mathcal{B}$ of $\mathbb{R}^{d}$ adapted to
the real Jordan structure of $R$ and let $L_{1}(R),\dotsc,L_{r}(R)$ be the
Lyapunov spaces of $R$, that is, the sums of the generalized eigenspaces
corresponding to eigenvalues $\lambda$ with the absolute value $\left\vert
\lambda\right\vert =\rho_{j}$. This yields the decomposition%
\[
\mathbb{R}^{d}=L_{1}(R)\oplus\cdots\oplus L_{r}(R).
\]
Let $d_{j}=\dim L_{j}(R)$ and denote the restriction of $R$ to $L_{j}(R)$ by
$R_{j}$. Now take an inner product on $\mathbb{R}^{d}$ such that the basis
$\mathcal{B}$ is orthonormal with respect to this inner product and let
$\left\Vert \cdot\right\Vert $ denote the induced norm.

\textbf{Step 2:} We fix some constants: Let $S_{0}$ be a real number which
satisfies%
\[
S_{0}>\sum\limits_{j=1}^{r}\max\{1,d_{j}\rho_{j}\}=\log\left\vert \det
A^{+}\right\vert ,
\]
and choose $\xi=\xi(S_{0})>0$ such that%
\[
0<d\xi<S_{0}-\sum\limits_{j=1}^{r}\max\{1,d_{j}\rho_{j}\}
\]
and such that $\rho_{j}<1$ implies $\rho_{j}+\xi<1$ for all $j$. Let
$\delta\in(0,\xi)$. It follows that there exists a constant $c=c(\delta)\geq1$
such that for all $j\ $and for all $k\in\mathbb{N}$%
\[
\left\Vert R_{j}^{k}\right\Vert \leq c(\rho_{j}+\delta)^{k}.
\]
For every $m\in\mathbb{N}$ we define positive integers by%
\[
M_{j}(m):=\left\{
\begin{array}
[c]{ccc}%
\left\lfloor (\rho_{j}+\xi)^{m}\right\rfloor +1 & \text{if} & \rho_{j}\geq1\\
1 & \text{if} & \rho_{j}<1
\end{array}
\right.
\]
and a function $\beta:\mathbb{N}\rightarrow(0,\infty)$ by%
\[
\beta(m):=\max_{1\leq j\leq r}\left\{  (\rho_{j}+\delta)^{m}\frac{\sqrt{d_{j}%
}}{M_{j}(m)}\right\}  ,m\in\mathbb{N}.
\]
If $\rho_{j}<1$, then $\rho_{j}+\delta<1$ and $M_{j}(m)\equiv1$, and hence
$(\rho_{j}+\delta)^{m}/M_{j}(m)$ converges to zero for $m\rightarrow\infty$.
If $\rho_{j}\geq1$, we have $M_{j}(m)\geq(\rho_{j}+\xi)^{m}$ and hence
\begin{equation}
(\rho_{j}+\delta)^{m}\frac{\sqrt{d_{j}}}{M_{j}(m)}\leq(\rho_{j}+\delta
)^{m}\frac{\sqrt{d_{j}}}{(\rho_{j}+\xi)^{m}}=\left(  \frac{\rho_{j}+\delta
}{\rho_{j}+\xi}\right)  ^{m}\sqrt{d_{j}}. \label{beta2}%
\end{equation}
Since $\delta\in(0,\xi)$, we have $\frac{\rho_{j}+\delta}{\rho_{j}+\xi}<1$
showing that also in this case $\beta(m)\rightarrow0$ for $m\rightarrow\infty$.

Since we assume controllability of $(A,B)$ and $\tau^{0}\geq d$ there exists
$C_{0}>0$ such that for every $x\in\mathbb{R}^{d}$ there is a control
$u\in\mathcal{U}$ with%
\begin{equation}
\varphi(\tau^{0},x,u)=R^{\tau^{0}}x+\sum_{i=0}^{\tau^{0}-1}R^{\tau^{0}%
-i}B^{\prime}u_{i}=0\text{ and }\left\Vert u\right\Vert _{\infty}\leq
C_{0}\left\Vert x\right\Vert . \label{Kawan5.9MODIFIED0}%
\end{equation}
The inequality follows by the inverse mapping theorem. For the corresponding
trajectory we find a constant $C_{1}>0$ such that for $k\in\{1,\ldots,\tau
^{0}\}$%
\begin{equation}
\left\Vert \varphi(k,x,u)\right\Vert \leq\left\Vert R\right\Vert
^{k}\left\Vert x\right\Vert +\sum_{i=0}^{k-1}\left\Vert R\right\Vert
^{k-i}\left\Vert B^{\prime}\right\Vert C_{0}\left\Vert x\right\Vert \leq
C_{1}\left\Vert x\right\Vert . \label{Kawan5.9MODIFIED}%
\end{equation}
For $b_{0}>0$ let $\mathcal{C}$ be the $d$-dimensional compact cube
$\mathcal{C}$ in $\mathbb{R}^{d}$ centered at the origin with sides of length
$2b_{0}$ parallel to the vectors of the basis $B$. Choose $b_{0}$ small enough
such that
\[
K:=x^{0}+\mathcal{C}\subset D
\]
and $\overline{B(u^{0}(k),Cb_{0})}\subset U$ for all $k\in\{0,\dotsc,\tau
^{0}\}$. This is possible, since $x^{0}\in\mathrm{int}D$ and all values
$u^{0}(k)$ are in the interior of $U$.

\textbf{Step 3.} Let $\varepsilon>0$ and $\tau=m\tau^{0}$ with $m\in
\mathbb{N}$. \ By Theorem \ref{theorem_bounded}, the closure $\overline{D}$ is
compact, hence for the continuous function $f$ on the compact set
$\overline{D}\times U$ there is $\varepsilon_{1}>0$ such that for all
$(x,u),(x^{\prime},u^{\prime})\in\overline{D}\times U$%
\begin{equation}
\max\left\{  \left\Vert x-x^{\prime}\right\Vert ,\left\Vert u-u^{\prime
}\right\Vert \right\}  <\varepsilon_{1}\text{ implies }\left\vert
f(x,u)-f(x^{\prime},u^{\prime})\right\vert <\varepsilon. \label{one}%
\end{equation}
We may take $m\in\mathbb{N}$ large enough such that%
\begin{equation}
\frac{d}{\tau}\log2=\frac{d}{m\tau^{0}}\log2<\varepsilon. \label{two}%
\end{equation}
Furthermore, we may choose $b_{0}$ small enough such that%
\begin{equation}
C_{0}b_{0}<\varepsilon_{1}\text{ and }C_{1}b_{0}<\varepsilon_{1}.
\label{three}%
\end{equation}
Partition $\mathcal{C}$ by dividing each coordinate axis corresponding to a
component of the $j$th Lyapunov space $L_{j}(R)$ into $M_{j}(\tau)$ intervals
of equal length. The total number of subcuboids in this partition of
$\mathcal{C}$ is $\prod_{j=1}^{r}M_{j}(\tau)^{d_{j}}$. Next we will show that
it suffices to take $\prod_{j=1}^{r}M_{j}(\tau)^{d_{j}}$ control functions to
steer the system from all states in $x^{0}+\mathcal{C}$ back to $x^{0}%
+\mathcal{C}$ in time $\tau$ such that the controls are within distance
$\varepsilon_{1}$ to $u^{0}$ and the corresponding trajectories remain within
distance $\varepsilon_{1}$ from the trajectory $\varphi(\cdot,x^{0},u^{0})$.
Let $y$ be the center of a subcuboid. By (\ref{Kawan5.9MODIFIED0}) there
exists $u=(u_{0},\ldots,u_{\tau^{0}-1})$ such that%
\begin{equation}
\varphi(\tau^{0},y,u)=0\text{ and }\left\Vert u\right\Vert _{\infty}\leq
C_{0}\left\Vert y\right\Vert \leq C_{0}b_{0}<\varepsilon_{1}. \label{four}%
\end{equation}
For $k\geq t_{0}$ let $u_{k}=0$. Hence $\varphi(\tau,y,u)=0$ and $u(t)\in U$
for all $k\in\{0,\dotsc,\tau\}$. Using (\ref{periodic}) and linearity, we find
that $x^{0}+y$ is steered by $u^{0}+u$ in time $\tau=m\tau^{0}$ to $x^{0}$,%
\begin{equation}
\varphi(\tau,x^{0}+y,u^{0}+u)=\varphi(\tau,x^{0},u^{0})+\varphi(\tau
,y,u)=x^{0}. \label{periodic2}%
\end{equation}
Now consider an arbitrary point $x\in\mathcal{C}$. Then it lies in one of the
subcuboids and we denote the corresponding center of this subcuboid by $y$
with associated control $u=u(y)$. We will show in \textbf{Step 4} that
$u^{0}+u$ also steers $x^{0}+x$ back to $x^{0}+\mathcal{C}$ and in
\textbf{Step 5} that the corresponding trajectory $\varphi(k,x^{0}+x,u^{0}+u)$
remains within distance $\varepsilon_{1}$ of $\varphi(k,x^{0},u^{0}%
),k\in\{0,\ldots,\tau\}$.

\textbf{Step 4.} Observe that%
\[
\left\Vert x-y\right\Vert \leq\frac{b_{0}}{M_{j}(\tau)}\sqrt{d_{j}}.
\]
By (\ref{beta2}) this implies that%
\[
\left\Vert R^{\tau}x-R^{\tau}y\right\Vert \leq\left\Vert R_{j}^{m\tau^{0}%
}\right\Vert \left\Vert x-y\right\Vert \leq c(\rho_{j}+\delta)^{m\tau^{0}%
}\frac{b_{0}}{M_{j}(m\tau^{0})}\sqrt{d_{j}}\rightarrow0\text{ for
}m\rightarrow\infty,
\]
and hence for $m$ large enough $\left\Vert R^{\tau}x-R^{\tau}y\right\Vert \leq
b_{0}$. This implies that the solution $\varphi(k,x^{0}+x,u^{0}+u),k\in
\mathbb{N}$, satisfies for $m$ large enough by (\ref{periodic2}) and
linearity,%
\begin{align*}
&  \left\Vert \varphi(\tau,x^{0}+x,u^{0}+u)-x^{0}\right\Vert \\
&  =\left\Vert R^{\tau}(x^{0}+x)+\sum_{i=0}^{\tau-1}R^{\tau-i}B^{\prime}%
(u_{i}^{0}+u_{i})-x^{0}\right\Vert \\
&  \leq\left\Vert R^{\tau}(x^{0}+x)-R^{\tau}(x^{0}+y)\right\Vert +\left\Vert
R^{\tau}(x^{0}+y)+\sum_{i=0}^{\tau-1}R^{\tau-i}B^{\prime}(u_{i}^{0}%
+u_{i})-x^{0}\right\Vert \\
&  \leq\left\Vert R^{\tau}x-R^{\tau}y\right\Vert +\left\Vert \varphi
(\tau,x^{0}+y,u^{0}+u)-x^{0}\right\Vert \\
&  \leq b_{0}+0.
\end{align*}
This shows that $\varphi(\tau,x^{0}+x,u^{0}+u)\in x^{0}+\mathcal{C}$ and it
also follows that $\varphi(\tau,x^{0}+x,u^{0}+u)\in D$ for all $k\in
\{0,1,\ldots,\tau\}$.

\textbf{Step 5. }By linearity and formulas (\ref{Kawan5.9MODIFIED0}),
(\ref{Kawan5.9MODIFIED}), and (\ref{three}) we can estimate for $k\in
\{0,1,\ldots,\tau^{0}\}$%
\begin{align*}
&  \left\Vert \varphi(k,x^{0}+x,u^{0}+u)-\varphi(k,x^{0},u^{0})\right\Vert \\
&  =\left\Vert R^{k}(x^{0}+x)+\varphi(k,0,u^{0}+u)-R^{k}x^{0}-\varphi
(k,0,u^{0})\right\Vert \\
&  =\left\Vert R^{k}x+\varphi(k,0,u)\right\Vert =\left\Vert \varphi
(k,x,u)\right\Vert \leq C_{1}\left\Vert x\right\Vert \leq C_{1}b_{0}%
<\varepsilon_{1}.
\end{align*}
Together with (\ref{four}) and (\ref{one}) this shows that for $k\in
\{0,1,\ldots,\tau\}$%
\begin{equation}
\left\vert f\left(  \varphi(k,x^{0}+x,u^{0}+u),u_{k}^{0}+u_{k})-f(\varphi
(k,x^{0},u^{0}),u_{k}^{0})\right)  \right\vert <\varepsilon. \label{five}%
\end{equation}

\textbf{Step 6. }We have constructed $\prod_{j=1}^{r}M_{j}(\tau)^{d_{j}}$
control functions that allow us to steer the system from all states in
$K=x^{0}+\mathcal{C}$ back to $x^{0}+\mathcal{C}$ in time $\tau$ and satisfy
(\ref{five}). By iterated concatenation of these control functions we obtain a
totally $(n\tau,K,D)$-spanning set $\mathcal{S}_{tot}$ for each $n\in
\mathbb{N}$ with cardinality%
\[
\#\mathcal{S}_{tot}\mathcal{=}\left(  \prod_{j=1}^{r}M_{j}(\tau)^{d_{j}%
}\right)  ^{n}=\left(  \prod_{j:\rho_{j}\geq0}\left(  \left\lfloor (\rho
_{j}+\xi)^{\tau}\right\rfloor +1\right)  ^{d_{j}}\right)  ^{n}.
\]
By (\ref{five}) it follows that%
\begin{align*}
\log a_{n\tau}(f,K,D) &  \leq\log\left(  \sum\nolimits_{(x,u)\in
\mathcal{S}_{tot}}e^{(S_{n\tau}f)(x,u)}\right)  \\
&  =\log\left(  \sum\nolimits_{(x,u)\in\mathcal{S}_{tot}}e^{(S_{n\tau}%
f)(x^{0},u^{0})}\cdot e^{(S_{n\tau}f)(x,u)-(S_{n\tau}f)(x^{0},u^{0})}\right)
\\
&  \leq\log\sum\nolimits_{(x,u)\in\mathcal{S}_{tot}}e^{(S_{n\tau}%
f)(x^{0},u^{0})}+\log e^{\sum_{i=0}^{n\tau-1}\varepsilon}\\
&  \leq\log\left(  \#\mathcal{S}_{tot}\mathcal{\cdot}e^{(S_{n\tau}%
f)(x^{0},u^{0})}\right)  +n\tau\varepsilon.
\end{align*}
This implies, using also (\ref{two}),%
\begin{align*}
\frac{1}{n\tau}\log a_{n\tau}(f,K,D) &  \leq\frac{1}{\tau}\sum_{j:\rho_{j}%
\geq0}d_{j}\log(\left\lfloor e^{(\rho_{j}+\xi)\tau}\right\rfloor +1)+\frac
{1}{n\tau}\sum_{i=0}^{n\tau-1}f(\varphi(i,x^{0},u^{0}),u_{i}^{0}%
)+\varepsilon\\
&  \leq\frac{1}{\tau}\sum_{j:\rho_{j}\geq0}d_{j}\log(2e^{(\rho_{j}+\xi)\tau
})+\frac{1}{\tau^{0}}\sum_{i=0}^{\tau^{0}-1}f(\varphi(i,x^{0},u^{0}),u_{i}%
^{0})+\varepsilon\\
&  \leq\frac{d}{\tau}\log2+\frac{1}{\tau}\sum_{j:\rho_{j}\geq0}d_{j}(\rho
_{j}+\xi)\tau+\frac{1}{\tau^{0}}\sum_{i=0}^{\tau^{0}-1}f(\varphi(i,x^{0}%
,u^{0}),u_{i}^{0})+\varepsilon\\
&  \leq\varepsilon+d\xi+\sum_{j:\rho_{j}\geq0}d_{j}\rho_{j}+\frac{1}{\tau^{0}%
}\sum_{i=0}^{\tau^{0}-1}f(\varphi(i,x^{0},u^{0}),u_{i}^{0})+\varepsilon\\
&  <S_{0}+\frac{1}{\tau^{0}}\sum_{i=0}^{\tau^{0}-1}f(\varphi(i,x^{0}%
,u^{0}),u_{i}^{0})+2\varepsilon.
\end{align*}
Since $\varepsilon$ can be chosen arbitrarily small and $S_{0}$ arbitrarily
close to $\log\left\vert \det A^{+}\right\vert $, the assertion of the
proposition follows.
\end{proof}

For the invariance pressure, we obtain the following consequence.

\begin{corollary}
\label{cor_upper}Consider a linear control system of the form (\ref{lin}) and
assume that the pair $(A,B)$ is controllable with a hyperbolic matrix $A$. Let
$D$ be the unique control set with nonvoid interior and let $f\in
C(U,\mathbb{R})$. Then for every compact set $K\subset D$ with nonvoid
interior the invariance pressure satisfies%
\[
P_{inv}(f,K,D)\leq\log\left\vert \det A^{+}\right\vert +\inf_{(\tau,x,u)}%
\frac{1}{\tau}\sum_{i=0}^{\tau-1}f(u_{i}),
\]
where the infimum is taken over all $\tau\in\mathbb{N}$ with $\tau\geq d$ and
all $\tau$-periodic controls $u$ with a $\tau$-periodic trajectory
$\varphi(\cdot,x,u)$ in $\mathrm{int}D$ such that $u_{i}\in\mathrm{int}U$ for
$i\in\{0,\dotsc,\tau-1\}$.
\end{corollary}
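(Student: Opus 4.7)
The plan is to reduce the statement directly to Proposition \ref{prop_upper_tot} by viewing a potential $f\in C(U,\mathbb{R})$ as a special case of a potential on $\overline{D}\times U$, and then to remove the dependence on the particular compact set $K$ using Proposition \ref{compact}.

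First I would define $\tilde f\in C(\overline D\times U,\mathbb{R})$ by $\tilde f(x,u):=f(u)$. This is continuous, so Proposition \ref{prop_upper_tot} applies and yields a compact set $K_{0}\subset D$ with nonvoid interior such that
\[
P_{tot}(\tilde f,K_{0},D)\leq\log\left\vert \det A^{+}\right\vert +\inf_{(\tau,x,u)}\frac{1}{\tau}\sum_{i=0}^{\tau-1}\tilde f(\varphi(i,x,u),u_{i}),
\]
the infimum being taken over the same class of $\tau$-periodic pairs described in the statement. Because $\tilde f$ depends only on the second argument, $\tilde f(\varphi(i,x,u),u_{i})=f(u_{i})$, so the right-hand side is exactly $\log|\det A^{+}|+\inf_{(\tau,x,u)}\tfrac{1}{\tau}\sum_{i=0}^{\tau-1}f(u_{i})$.

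Next I would observe that, as remarked in Section \ref{section5}, when the potential does not depend on the state the total invariance pressure coincides with the invariance pressure: in this situation every totally $(\tau,K_{0},D)$-spanning set $\mathcal{S}_{tot}$ projects onto a $(\tau,K_{0},D)$-spanning set $\mathcal{S}$ with $\sum_{u\in\mathcal{S}}e^{(S_{\tau}f)(u)}\leq\sum_{(x,u)\in\mathcal{S}_{tot}}e^{(S_{\tau}\tilde f)(x,u)}$, and conversely any spanning $\mathcal{S}$ can be promoted to a totally spanning set $\mathcal{S}_{tot}$ of the same cardinality by choosing an admissible $x_{u}\in K_{0}$ for each $u$. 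Hence $a_{\tau}(f,K_{0},D)=a_{\tau}(\tilde f,K_{0},D)$ for every $\tau$, and therefore $P_{inv}(f,K_{0},D)=P_{tot}(\tilde f,K_{0},D)$, giving the desired inequality for the specific $K_{0}$ produced by Proposition \ref{prop_upper_tot}.

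Finally, to upgrade the conclusion from $K_{0}$ to an arbitrary compact $K\subset D$ with nonvoid interior, I would invoke Proposition \ref{compact}. Since $(A,B)$ is controllable, Theorem \ref{theorem_existence} gives accessibility at every point of $\mathbb{R}^{d}$, so the hypotheses of Proposition \ref{compact} are met, and $P_{inv}(f,K,D)=P_{inv}(f,K_{0},D)$ for any such $K$. No step is genuinely hard here: the only point to verify carefully is the equality $a_{\tau}(f,K,D)=a_{\tau}(\tilde f,K,D)$ and the applicability of Proposition \ref{compact}, both of which are routine once Proposition \ref{prop_upper_tot} is in hand.
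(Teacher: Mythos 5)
Your proposal is correct and follows essentially the same route as the paper: Proposition \ref{prop_upper_tot} applied to the state-independent potential $\tilde f(x,u)=f(u)$ (with the identification $P_{tot}(\tilde f,K,D)=P_{inv}(f,K,D)$, already noted in Section \ref{section5}), followed by Proposition \ref{compact} to pass to an arbitrary compact $K\subset D$ with nonvoid interior. The only difference is that you spell out the equality $a_\tau(f,K,D)=a_\tau(\tilde f,K,D)$ explicitly, which the paper takes as a known remark.
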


\begin{proof}
The assertion follows from Proposition \ref{prop_upper_tot}, since every
compact subset of $D$ is contained in a compact subset $K$ of $D$ with nonvoid
interior and the invariance pressure is independent of the choice of such a
set $K$ by Proposition \ref{compact}$.$
\end{proof}

\begin{remark}
Kawan \cite[Theorem 3.1]{Kawa13} derives for the outer invariance entropy
$h_{inv,out}(K,Q)$, which is a lower bound for the invariance entropy, the
formula%
\[
h_{inv,out}(K,Q)=\log\left\vert \det A^{+}\right\vert .
\]
Here $(K,Q)$ is an admissible pair, $K$ has positive Lebesgue measure, and $Q$
is compact. For the potential $f=0$, Corollary \ref{cor_upper} shows that the
invariance entropy satisfies%
\[
h_{inv}(K,Q)\leq\log\left\vert \det A^{+}\right\vert =h_{inv,out}(K,Q)\leq
h_{inv}(K,Q)
\]
implying that
\begin{equation}
h_{inv}(K,Q)=\log\left\vert \det A^{+}\right\vert .\label{h_inv}%
\end{equation}

\end{remark}

We proceed to prove a lower bound for the invariance pressure. Recall that
with respect to $A$ the state space $\mathbb{R}^{d}$ can be decomposed into
the direct sum of the center-stable subspace $E^{sc}$ and the unstable
subspace $E^{u}$ which are the direct sums of all generalized real eigenspaces
for the eigenvalues $\lambda$ with $\left\vert \lambda\right\vert \leq1$ and
$\left\vert \lambda\right\vert >1$, resp. Let $\pi:\mathbb{R}^{d}\rightarrow
E^{u}$ be the projection along $E^{sc}$.

\begin{proposition}
\label{prop_lower}Let $K\subset D$ be compact and assume that both $K$ and $D$
have positive and finite Lebesgue measure. Then for every $f\in C(U,\mathbb{R}%
)$
\[
P_{inv}(f,K,D)\geq\log\left\vert \det A^{+}\right\vert +\inf_{(\tau,x,u)}%
\frac{1}{\tau}\sum_{i=0}^{\tau-1}f(u_{i}),
\]
where the infimum is taken over all $(\tau,x,u)\in\mathbb{N}\times
D\times\mathcal{U}$ with $\tau\geq d$ and $\pi\varphi(i,x,u)\in\pi D$ for
$i\in\{0,1,\dotsc,\tau-1\}$.
\end{proposition}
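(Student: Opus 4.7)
The plan is to combine a volume-expansion estimate in the unstable subspace $E^u$ with a weighted counting argument that promotes the resulting per-control volume bound into a lower bound for $a_\tau(f,K,D)$.

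Fix $\tau\geq d$ and a $(\tau,K,D)$-spanning set $\mathcal{S}\subset\mathcal{U}$. For $u\in\mathcal{S}$ set
\[
C_u:=\{x\in K:\varphi(k,x,u)\in D\text{ for }k=1,\dotsc,\tau\},
\]
so $K=\bigcup_{u\in\mathcal{S}}C_u$. Since $E^u$ and $E^{sc}$ are $A$-invariant, $\pi$ commutes with $A$ and
\[
\pi\varphi(\tau,x,u)=(A^+)^{\tau}\pi x+\sum_{i=0}^{\tau-1}A^{\tau-1-i}\pi Bu_i
\]
is an affine map $E^u\to E^u$ with Jacobian determinant $(\det A^+)^{\tau}$. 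From $\varphi(\tau,C_u,u)\subset D$ we have $\pi\varphi(\tau,C_u,u)\subset\pi D$, so the change-of-variables formula on $E^u$ gives the per-control bound
\[
|\det A^+|^{\tau}\,\mu(\pi C_u)\leq\mu(\pi D),
\]
where $\mu$ denotes Lebesgue measure on $E^u$. Fubini, together with positivity of the Lebesgue measure of $K$, gives $\mu(\pi K)>0$, while $\mu(\pi D)<\infty$ is inherited from the boundedness of $D$ in the $E^u$-direction provided by the compact set $F\subset E^u$ of Theorem~\ref{prop1.1.12}.

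Using $\pi K=\bigcup_{u\in\mathcal{S}}\pi C_u$ together with the identity $\mu(\pi C_u)=e^{(S_\tau f)(u)}\,e^{-(S_\tau f)(u)}\mu(\pi C_u)$,
\[
\mu(\pi K)\leq\Bigl(\sum_{u\in\mathcal{S}}e^{(S_\tau f)(u)}\Bigr)\,|\det A^+|^{-\tau}\,\mu(\pi D)\,\sup_{u\in\mathcal{S}}e^{-(S_\tau f)(u)}.
\]
For every $u\in\mathcal{S}$ (with $C_u\neq\emptyset$) pick $x_u\in C_u\subset K\subset D$; then $(\tau,x_u,u)$ satisfies $\pi\varphi(i,x_u,u)\in\pi D$ for $i=0,\dotsc,\tau-1$ and so competes in the infimum $\Lambda$ appearing in the statement, giving $\tfrac{1}{\tau}(S_\tau f)(u)\geq\Lambda$. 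Rearranging the above and taking the infimum over spanning sets yields
\[
a_\tau(f,K,D)\geq\frac{\mu(\pi K)}{\mu(\pi D)}\,|\det A^+|^{\tau}\,e^{\tau\Lambda}.
\]
Taking logarithms, dividing by $\tau$, and passing to $\limsup_{\tau\to\infty}$ absorbs the constant prefactor and produces $P_{inv}(f,K,D)\geq\log|\det A^+|+\Lambda$, as required.

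The main hurdle is the weighted counting step: the naive volume argument only bounds $\#\mathcal{S}$ from below by $\mu(\pi K)\,|\det A^+|^{\tau}/\mu(\pi D)$, and the trick of inserting the identity factor $e^{(S_\tau f)(u)}e^{-(S_\tau f)(u)}$ before summing is what upgrades this cardinality bound to a bound on the weighted pressure sum. A secondary technical point is the finiteness of $\mu(\pi D)$, which must be drawn from the structural description of Theorem~\ref{prop1.1.12} rather than from the hypothesis that $D$ has finite Lebesgue measure alone.
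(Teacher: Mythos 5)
Your proof is correct and follows essentially the same route as the paper: project to $E^u$, use the change-of-variables formula to get $\mu(\pi C_u)\leq\mu(\pi D)/|\det A^+|^\tau$ for each $u$ in the spanning set, and upgrade the covering estimate to a weighted bound on $\sum_{u\in\mathcal{S}}e^{(S_\tau f)(u)}$ --- the paper's chain $e^{\beta(\tau)}\mu(\pi K)\leq\sum_u e^{(S_\tau f)(u)}\mu(\pi K_u)\leq\frac{\mu(\pi D)}{|\det A^+|^\tau}\sum_u e^{(S_\tau f)(u)}$, with $\beta(\tau)=\inf_u(S_\tau f)(u)$, is exactly your ``insert $e^{(S_\tau f)(u)}e^{-(S_\tau f)(u)}$'' manipulation. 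The only difference in presentation is that the paper handles the case $E^u=\{0\}$ separately by reducing to the invariance-entropy formula, rather than relying on conventions for $0$-dimensional Lebesgue measure.
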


\begin{proof}
Every $(\tau,K,Q)$-spanning set $\mathcal{S}$ satisfies%
\begin{equation}
\log\sum_{u\in\mathcal{S}}e^{(S_{\tau}f)(u)}\geq\log\inf_{u\in\mathcal{S}%
}e^{(S_{\tau}f)(u)}+\log\#\mathcal{S}. \label{24b}%
\end{equation}
First suppose that the unstable subspace of $A$ is trivial, $E^{u}=0$. Formula
(\ref{h_inv}) implies that
\[
\underset{\tau\rightarrow\infty}{\overline{\lim}}\frac{1}{\tau}\inf\left\{
\log\#\mathcal{S}\left\vert \mathcal{S}\text{ }(\tau,K,Q)\text{-spanning}%
\right.  \right\}  =h_{inv}(K,D)=\log\left\vert \det A^{+}\right\vert =0.
\]
Now (\ref{tip_alt}) and (\ref{24b}) implies%
\begin{align*}
&  P_{inv}(f,K,Q)=\underset{\tau\rightarrow\infty}{\overline{\lim}}\frac
{1}{\tau}\inf\left\{  \log\sum_{u\in\mathcal{S}}e^{(S_{\tau}f)(u)}\left\vert
\mathcal{S}\text{ }(\tau,K,Q)\text{-spanning}\right.  \right\} \\
&  \geq\underset{\tau\rightarrow\infty}{\overline{\lim}}\frac{1}{\tau}%
\inf\left\{  \log\inf_{u\in\mathcal{S}}e^{(S_{\tau}f)(u)}+\log\#\mathcal{S}%
\left\vert \mathcal{S}\text{ }(\tau,K,Q)\text{-spanning}\right.  \right\} \\
&  \geq\underset{\tau\rightarrow\infty}{\overline{\lim}}\frac{1}{\tau}%
\inf\left\{  \inf_{u\in\mathcal{S}}\sum_{i=0}^{\tau-1}f(u_{i})\left\vert
\mathcal{S}\text{ }(\tau,K,Q)\text{-spanning}\right.  \right\}  +0\\
&  \geq\underset{\tau\rightarrow\infty}{\overline{\lim}}\inf_{u\in\mathcal{S}%
}\frac{1}{\tau}\sum_{i=0}^{\tau-1}f(u_{i})\geq\inf_{u\in\mathcal{S}}\frac
{1}{\tau}\sum_{i=0}^{\tau-1}f(u_{i}).
\end{align*}
Since for $u\in\mathcal{S}$ there is $x\in K$ with $\pi\varphi(i,x,u)=0\in\pi
D$ for $i\in\{0,1,\dotsc,\tau-1\}$, the assertion for trivial unstable
subspace $E^{-}$ follows.

Now suppose that $E^{u}$ is nontrivial. We may assume that $P_{inv}%
(f,K,Q)<\infty$ and hence and all considered spanning sets are countable. Note
that by invariance of $E^{sc}$ and $E^{u}$ the induced system on $E^{u}$ is
well defined with trajectories $\pi\varphi(k,x,u),k\in\mathbb{N}$. For each
$u$ in a $(\tau,K,D)$-spanning set $\mathcal{S}$ define%
\[
\pi K_{u}:=\pi K\cap\bigcap_{t=0}^{\tau-1}\left(  \pi\varphi_{t,u}\right)
^{-1}(D).
\]
Thus $\pi K=%
%TCIMACRO{\tbigcup \nolimits_{u\in\mathcal{S}}}%
%BeginExpansion
{\textstyle\bigcup\nolimits_{u\in\mathcal{S}}}
%EndExpansion
\pi K_{u}$. Since $D$ is measurable, each set $\pi K_{u}$ is measurable as the
countable intersection of measurable sets. We denote the Lebesgue measure in
$\mathbb{R}^{d}$ by $\mu^{d}$ and the induced measure on $E^{u}$ by $\mu$. The
linear part of the affine-linear map $\pi\varphi_{\tau,u}(x)$ is given by
$(A^{+})^{\tau}$, hence it follows that
\[
\mu(\pi D)\geq\mu(\pi\varphi_{\tau,u}(\pi K_{u}))=\int\limits_{\pi
\varphi_{\tau,u}(\pi K_{u})}\mathrm{d}\mu=\int\limits_{\pi K_{u}}\left\vert
\det(A^{+})^{\tau}\right\vert \mathrm{d}\mu=\mu(\pi K_{u})\left\vert \det
A^{+}\right\vert ^{\tau}.
\]
Abbreviate $~\beta(\tau)=\inf_{(x,u)}(S_{\tau}f)(u)$, where the infimum is
taken over all $(\pi x,u)\in\pi K\times\mathcal{U}$ with $\pi\varphi
(i,x,u)\in\pi D$ for $i=0,\dotsc,\tau-1$. Then we find
\begin{align*}
e^{\beta(\tau)}\mu(\pi K) &  \leq\sum_{u\in\mathcal{S}}e^{(S_{\tau}f)(u)}%
\mu(\pi K_{u})\leq\sup_{u\in\mathcal{S}}\mu(\pi K_{u})\sum_{u\in\mathcal{S}%
}e^{(S_{\tau}f)(u)}\\
&  \leq\frac{\mu(\pi D)}{\left\vert \det A^{+}\right\vert ^{\tau}}\sum
_{u\in\mathcal{S}}e^{(S_{\tau}f)(u)}.
\end{align*}
Since this holds for every $(\tau,K,D)$-spanning set $\mathcal{S}$ and
$\mu^{d}(D)>0$ implies $\mu(\pi D)>0$, we find
\[
a_{\tau}(f,K,D)=\inf\{\sum_{u\in\mathcal{S}}e^{(S_{\tau}f)(u)}\left\vert
\mathcal{S}\text{ }(\tau,K,D)\text{-spanning}\right.  \}\geq\frac{\mu(\pi
K)}{\mu(\pi D)}e^{\beta(\tau)}\left\vert \det A^{+}\right\vert ^{\tau},
\]
implying%
\begin{align*}
&  P_{inv}(f,K,D)=\underset{\tau\rightarrow\infty}{\overline{\lim}}\frac
{1}{\tau}\log a_{\tau}(f,K,D)\geq\inf_{\tau}\frac{1}{\tau}\beta(\tau
)+\log\left\vert \det A^{+}\right\vert \\
&  =\inf_{(\tau,x,u)}\frac{1}{\tau}(S_{\tau}f)(u)+\log\left\vert \det
A^{+}\right\vert ,
\end{align*}
where the infimum is taken over all $(\tau,x,u)\in\mathbb{N}\times\pi
K\times\mathcal{U}$ with $\pi\varphi(i,x,u)\in\pi D$ for $i=0,\dotsc,\tau-1$.
\end{proof}

The next theorem is the main result of this paper. For linear discrete-time
control systems it provides a formula for the invariance pressure of control sets.

\begin{theorem}
\label{main}Consider a linear control system of the form (\ref{lin}) and
assume that the system without control restriction is controllable in
$\mathbb{R}^{d}$, the matrix $A$ is hyperbolic, and the control range $U$ is a
compact convex neighborhood of the origin with $U=\overline{\mathrm{int}U}$.
Let $D$ be the unique control set with nonvoid interior. Then $D$ is bounded
and for every compact set $K\subset D$ with nonvoid interior and every
potential $f\in C(U,\mathbb{R})$, the invariance pressure is given by
\[
P_{inv}(f,K,D)=\log\left\vert \det A^{+}\right\vert +\min_{u\in U}%
f(u)=h_{inv}(K,D)+\min_{u\in U}f(u).
\]

\end{theorem}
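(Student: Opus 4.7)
The plan is to sandwich the invariance pressure between matching bounds from Corollary \ref{cor_upper} and Proposition \ref{prop_lower}, showing that both infima collapse to $\min_{u \in U} f(u)$. The lower bound is immediate: every triple $(\tau, x, u)$ admissible in Proposition \ref{prop_lower} satisfies $\frac{1}{\tau}\sum_{i=0}^{\tau-1} f(u_i) \geq \min_{u \in U} f(u)$ by the definition of the minimum, so $P_{inv}(f, K, D) \geq \log|\det A^+| + \min_{u \in U} f(u)$.

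For the upper bound, pick $u^{\ast} \in U$ attaining $\min f$ (by continuity and compactness). Hyperbolicity makes $I - A$ invertible since $1 \notin \sigma(A)$, so for each $u \in U$ the constant control has a unique fixed point $x(u) := (I - A)^{-1} B u$, depending linearly on $u$. My aim is: for each $\varepsilon > 0$, produce $u^{\varepsilon} \in \mathrm{int}\, U$ with $x(u^{\varepsilon}) \in \mathrm{int}\, D$ and $f(u^{\varepsilon}) < \min f + \varepsilon$. Then Corollary \ref{cor_upper}, applied with $\tau = d$ and the trivially $\tau$-periodic constant trajectory at $x(u^{\varepsilon})$, gives $P_{inv}(f, K, D) \leq \log|\det A^+| + f(u^{\varepsilon})$, and sending $\varepsilon \to 0$ completes the proof.

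The main obstacle is producing such $u^{\varepsilon}$, and the geometric core of the argument is the claim that $x(u) \in \overline{D}$ for every $u \in U$. I would prove this as follows: starting the constant control $u$ at $0$ yields $\varphi(k, 0, u) = (I - A^k) x(u)$, whose $E^s$-component converges to the $E^s$-component of $x(u)$ since $A|_{E^s}$ is a contraction; each $\varphi(k, 0, u) \in \mathbf{R}(0) \subset \overline{\mathbf{R}(0)} = K_s + E^u$ with $K_s \subset E^s$ compact (Theorem \ref{prop1.1.12}(i), using $E^c = \{0\}$), so its $E^s$-component sits in the closed set $K_s$, and therefore the limit $E^s$-component of $x(u)$ does too, giving $x(u) \in K_s + E^u = \overline{\mathbf{R}(0)}$. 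The mirror argument on the time-reversed system via Proposition \ref{prop1.1.8} yields $x(u) \in \overline{\mathbf{C}(0)}$. Since $0$ lies in the interiors of the full-dimensional convex sets $\overline{\mathbf{R}(0)}$ and $\overline{\mathbf{C}(0)}$ (as $0 \in \mathrm{int}\, D$), the standard convex-analysis identity $\overline{A \cap B} = \overline{A} \cap \overline{B}$, valid when relative interiors meet, yields $\overline{D} = \overline{\mathbf{R}(0)} \cap \overline{\mathbf{C}(0)}$, whence $x(u) \in \overline{D}$.

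To finish, observe that $D$ is itself convex: because $0$ is an equilibrium under $u = 0$, the convex sets $\mathbf{R}_k(0)$ are nested in $k$, so $\mathbf{R}(0)$ and likewise $\mathbf{C}(0)$ are convex, and $D = \overline{\mathbf{R}(0)} \cap \mathbf{C}(0)$ by Proposition \ref{lem1.1.2}. Since $0 \in \mathrm{int}\, D \cap \mathrm{int}\, U$, the open segment from $0$ to $u^{\ast}$ lies in $\mathrm{int}\, U$ and the open segment from $0$ to $x(u^{\ast}) \in \overline{D}$ lies in $\mathrm{int}\, D$. Thus $u^{\varepsilon} := t u^{\ast}$ with $t < 1$ sufficiently close to $1$ satisfies $u^{\varepsilon} \in \mathrm{int}\, U$, $x(u^{\varepsilon}) = t\, x(u^{\ast}) \in \mathrm{int}\, D$ by linearity, and $f(u^{\varepsilon}) \to f(u^{\ast}) = \min f$ by continuity. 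The last equality in the theorem, $P_{inv} = h_{inv} + \min f$, follows from $h_{inv}(K, D) = \log|\det A^+|$, noted in the remark after Corollary \ref{cor_upper}.
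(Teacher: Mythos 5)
Your overall strategy matches the paper's: sandwich $P_{inv}(f,K,D) - \log|\det A^+|$ between the lower bound from Proposition \ref{prop_lower} and the upper bound from Corollary \ref{cor_upper}, and show both infima collapse to $\min_{u\in U} f(u)$. The lower bound is handled in the same way. The genuine difference lies in how you produce a $\tau$-periodic orbit in $\mathrm{int}\,D$ with control values in $\mathrm{int}\,U$ and time-averaged potential close to $\min f$. The paper picks a control $u^1$ near the minimizer with values in $\mathrm{int}\,U$, solves $(I-A^d)x^1 = \varphi(d,0,u^1)$ for the $d$-periodic starting point (no constancy assumed), argues via local controllability and uniqueness of the control set that $x^1 \in \mathrm{int}\,D$, and then uses Proposition \ref{proposition_in} to keep the whole periodic trajectory inside $\mathrm{int}\,D$. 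You instead work with constant controls, compute the equilibrium $x(u) = (I-A)^{-1}Bu$ explicitly, and prove $x(u) \in \overline{D}$ for all $u \in U$ by projecting the trajectory from $0$ onto $E^s$ (and, for the time-reversed system, onto $E^u$) and passing to the limit inside the compact factors of the structure Theorem \ref{prop1.1.12}; then a scaling $u^\varepsilon = t u^\ast$ combined with convexity of $D$ moves the equilibrium into $\mathrm{int}\,D$. Your route avoids Proposition \ref{proposition_in} entirely (the constant trajectory is trivially in $\mathrm{int}\,D$ once its starting point is), but relies on the structure theorem, on convexity of $D$ (not stated in the paper but immediate from $D = \overline{\mathbf{R}(0)}\cap\mathbf{C}(0)$ and the nested convex $\mathbf{R}_k(0)$, $\mathbf{C}_k(0)$), and on Rockafellar's identity $\overline{C_1\cap C_2} = \overline{C_1}\cap\overline{C_2}$ for convex sets with meeting relative interiors. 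Both arguments are sound; yours is more explicit and gives the pleasant byproduct that every constant-control equilibrium with $u\in U$ lies in $\overline{D}$, while the paper's local-controllability argument would generalize more easily if one needed non-constant periodic controls.
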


\begin{proof}
Theorems \ref{theorem_existence} and \ref{theorem_bounded} imply existence,
uniqueness, and boundedness of the control set $D$. Formula (\ref{h_inv})
implies that $h_{inv}(K,D)=\log\det A^{+}$ showing the second equality above.
Proposition \ref{prop_lower} and Corollary \ref{cor_upper} yield the bounds,%
\begin{equation}
\inf_{(\tau^{\prime},x^{\prime},u^{\prime})}\frac{1}{\tau^{\prime}}\sum
_{i=0}^{\tau^{\prime}-1}f(u_{i}^{\prime})\leq P_{inv}(f,K,Q)-\log\left\vert
\det A^{+}\right\vert \leq\inf_{(\tau,x,u)}\frac{1}{\tau}\sum_{i=0}^{\tau
-1}f(u_{i}),\label{M0}%
\end{equation}
where the first infimum is taken over all $(\tau^{\prime},x^{\prime}%
,u^{\prime})\in\mathbb{N}\times D\times\mathcal{U}$ with $\tau^{\prime}\geq d$
and $\pi\varphi(i,x^{\prime},u^{\prime})\in\pi D$ for $i\in\{0,\dotsc
,\tau^{\prime}-1\}$ and the second infimum is taken over all $\tau
\in\mathbb{N}$ with $\tau\geq d$ and all $\tau$-periodic controls $u$ with a
$\tau$-periodic trajectory $\varphi(\cdot,x,u)$ in $\mathrm{int}D$ such that
$u_{i}\in\mathrm{int}U$ for $i\in\{0,\dotsc,\tau-1\}$.

Note that there is a control value $u^{0}\in U$ with $f(u^{0})=\min_{u\in
U}f(u)$. Consider
\begin{equation}
f(u^{0})=\frac{1}{d}\sum_{i=0}^{d-1}f(u^{0})\leq\inf_{(\tau^{\prime}%
,x^{\prime},u^{\prime})}\frac{1}{\tau^{\prime}}\sum_{i=0}^{\tau^{\prime}%
-1}f(u_{i}^{\prime}), \label{M2}%
\end{equation}
where the infimum is taken over all triples $(\tau^{\prime},x^{\prime
},u^{\prime})\in\mathbb{N}\times K\times\mathcal{U}$ with $\tau^{\prime}\geq
d$ and $\pi\varphi(i,x^{\prime},u^{\prime})\in\pi D$ for $i\in\{0,\dotsc
,\tau^{\prime}-1\}$. Let $\varepsilon>0$. Then there is a control function
$u^{1}$ with values in a compact subset of $\mathrm{int}U$ such that%
\begin{equation}
\frac{1}{d}\sum_{i=0}^{d-1}f(u_{i}^{1})\leq\frac{1}{d}\sum_{i=0}^{d-1}%
f(u^{0})+\varepsilon. \label{M3}%
\end{equation}
By hyperbolicity of $A$ the matrix $I-A^{d}$ is invertible, and hence there
exists a unique solution $x^{1}$ of
\[
\left(  I-A^{d}\right)  x^{1}=\varphi(d,0,u^{1}).
\]
Now by linearity
\[
x^{1}=A^{d}x^{1}+\varphi(d,0,u^{1})=\varphi(d,x^{1},u^{1}).
\]
Since the values of $u^{1}$ are in $\mathrm{int}U$ and $(A,B)$ is
controllable, it follows that a neighborhood of $x^{1}$ can be reached in time
$d$ from $x^{1}$. Analogously, $x^{1}$ can be reached from every point in a
neighborhood of $x^{1}$ in time $d$. Hence in the intersection of these two
neighborhoods every point can be steered in time $2d$ into every other point.
This shows that $x^{1}$ is in the interior of the control set $D$, and the
corresponding trajectory $\varphi(i,x^{1},u^{1}),i\in\{0,\dotsc,d-1\}$,
remains by Proposition \ref{proposition_in} in the interior of $D$. Extending
$u^{1}$ to a $d$-periodic control again denoted by $u^{1}$ we find that the
control-trajectory pair $(u^{1}(\cdot),\varphi(\cdot,x^{1},u^{1}))$ is
$d$-periodic, the trajectory is contained in $\mathrm{int}D$ and all values
$u_{i}^{1}$ are in a compact subset of $\mathrm{int}U$. It follows that%
\begin{align*}
&  \inf_{(\tau^{\prime},x^{\prime},u^{\prime})}\frac{1}{\tau^{\prime}}%
\sum_{i=0}^{\tau^{\prime}-1}f(u_{i}^{\prime})\overset{(\ref{M2})}{\geq}%
f(u^{0})=\frac{1}{d}\sum_{i=0}^{d-1}f(u^{0})\overset{(\ref{M3})}{\geq}\frac
{1}{d}\sum_{i=0}^{d-1}f(u_{i}^{1})-\varepsilon\\
&  \geq\inf_{(\tau,x,u)}\frac{1}{\tau}\sum_{i=0}^{\tau-1}f(u_{i})-\varepsilon,
\end{align*}
where the first infimum is taken over all triples $(\tau^{\prime},x^{\prime
},u^{\prime})\in\mathbb{N}\times K\times\mathcal{U}$ with $\tau^{\prime}\geq
d$ and $\pi\varphi(i,x^{\prime},u^{\prime})\in\pi D$ for $i\in\{0,\dotsc
,\tau^{\prime}-1\}$ and the second infimum is taken over all $(\tau
,x,u)\in\mathbb{N}\times D\times\mathcal{U}$ such that the control-trajectory
pair $(u,\varphi(\cdot,x,u))$ is $\tau$-periodic with $\tau\geq d$, the
trajectory is contained in $\mathrm{int}D$, and the control values $u_{i}$ are
in a compact subset of $\mathrm{int}U$.

Using this in (\ref{M0}) we get
\[
\inf_{(\tau^{\prime},x^{\prime},u^{\prime})}\frac{1}{\tau^{\prime}}\sum
_{i=0}^{\tau^{\prime}-1}f(u_{i}^{\prime})\leq P_{inv}(f,K,Q)-\log\left\vert
\det A^{+}\right\vert \leq\inf_{(\tau^{\prime},x^{\prime},u^{\prime})}\frac
{1}{\tau^{\prime}}\sum_{i=0}^{\tau^{\prime}-1}f(u_{i}^{\prime})+\varepsilon.
\]
Since $\varepsilon>0$ is arbitrary, the assertion of the theorem follows.
\end{proof}

\begin{remark}
For partially hyperbolic control systems, Da Silva and Kawan prove in
\cite{KawaDS18} relations between invariance entropy and topological pressure
for the unstable determinant. In contrast to our framework, they consider the
topological pressure (with respect to the fibers) of associated random
dynamical systems obtained by endowing the space of controls with shift
invariant probability measures.
\end{remark}

\end{document}